\documentclass[12pt,pdftex,a4paper]{amsart}
\pdfoutput = 1

\usepackage[centering,text={15.5cm,23cm}]{geometry} 

\usepackage{amsfonts,amssymb, amscd, amsmath, latexsym, graphicx, psfrag, color,float,bbm}

\usepackage{graphicx,color,caption} 
\usepackage{comment}
\usepackage[shortlabels]{enumitem}
\usepackage{wrapfig}
\usepackage[all]{xy}
\usepackage{mathrsfs}
\usepackage{mathtools}
\usepackage{marvosym}
\usepackage{stmaryrd}
\usepackage{srcltx}
\usepackage{hyperref}
\usepackage{upgreek} 

\usepackage{tikz}
\usetikzlibrary{matrix,shapes,arrows,arrows.meta,calc,topaths,intersections,hobby,positioning,decorations.pathreplacing,decorations.pathmorphing,fit,patterns}
\tikzset{cross/.style={cross out, draw=black, fill=none, minimum
    size=2*(#1-\pgflinewidth), inner sep=0pt, outer sep=0pt},
  cross/.default={2pt}}

\tikzset{thickstyle/.style={shape=circle,fill=black,scale=0.3}}
\tikzset{thinstyle/.style={shape=circle,fill=black,scale=0.15}}

\usepackage{braket}

\definecolor{mygray}{gray}{0.75} 

\usepackage{extarrows}

\definecolor{shadecolor}{rgb}{1,0.9,0.7}

\setlength{\marginparwidth}{10ex}
\setcounter{tocdepth}{2}

\newtheorem{thm}{Theorem}[section]
\newtheorem{lem}[thm]{Lemma}
\newtheorem{lemma-definition}[thm]{Lemma-Definition}
\newtheorem{pro}[thm]{Proposition}
\newtheorem{cor}[thm]{Corollary}

\theoremstyle{definition}
\newtheorem{setup}[thm]{Setup}
\newtheorem{dfn}[thm]{Definition}
\newtheorem{construction}[thm]{Construction}
\newtheorem{notation}[thm]{Notation}
\newtheorem{assumption}[thm]{Assumption}

\newtheorem{convention}[thm]{Convention}
\newtheorem{exa}[thm]{Example}

\theoremstyle{remark}
\newtheorem{rem}[thm]{Remark}

\newlength{\assumlabelwd}
\settowidth{\assumlabelwd}{\textbf{(1)}}  

\newlist{assumptions}{enumerate}{1}
\setlist[assumptions]{
  label=\textbf{\thethm.}, before=\let\orgitem\item \renewcommand{\item}{\stepcounter{thm}\orgitem},
  wide=0pt,                         
  labelsep=1em,                     
  topsep=1ex,                       
  itemsep=1.5ex plus .2ex,          
  parsep=0pt,                       
  partopsep=0pt                     
}

\numberwithin{equation}{section}
\numberwithin{figure}{section}


\newcommand{\Mod}[1]{\ (\mathrm{mod}\ #1)}

\newcommand{\NN} {\mathbb{N}}
\newcommand{\ZZ} {\mathbb{Z}}

\newcommand{\RR} {\mathbb{R}}

\newcommand{\PP} {\mathbb{P}}
\renewcommand{\AA} {\mathbb{A}}

\newcommand {\shC}  {\mathcal{C}}

\newcommand {\shDiv} {\mathcal{D} \!\text{\textit{iv}}}

\newcommand {\shExt} {\mathcal{E} \!\text{\textit{xt}}}
\newcommand {\shExtc} {\mathcal{E} \!\text{\textit{xt}}_c}

\newcommand {\shF}  {\mathcal{F}}

\newcommand {\shHom} {\mathcal{H}\!\text{\textit{om}}}
\newcommand {\shI}  {\mathcal{I}}

\newcommand {\shK}  {\mathcal{K}}
\newcommand {\shL}  {\mathcal{L}}
\newcommand{\shLS}{\mathcal{LS}}
\newcommand {\shM}  {\mathcal{M}}

\newcommand {\shN}  {\mathcal{N}}
\newcommand {\shO}  {\mathcal{O}}

\newcommand {\shR}  {\mathcal{R}}
\newcommand {\shS}  {\mathcal{S}}
\newcommand {\shT}  {\mathcal{T}}

\newcommand {\shP}  {\mathcal{P}}

\newcommand{\uM}{\underline{M}}

\newcommand{\uT}{\underline{T}}

\newcommand {\foM}  {\mathfrak{M}}

\newcommand {\foP}  {\mathfrak{P}}
\newcommand {\foQ}  {\mathfrak{Q}}

\newcommand {\foX}  {\mathfrak{X}}

\newcommand {\fom}  {\mathfrak{m}}


\newcommand{\cO}{\mathcal{O}}


\newcommand {\Bl}  {\operatorname{Bl}}

\newcommand {\chr} {\operatorname{char}}

\newcommand {\codim} {\operatorname{codim}}

\renewcommand{\div}  {\operatorname{div}}

\newcommand {\Div}  {\operatorname{Div}}

\newcommand {\Ext}  {\operatorname{Ext}}

\newcommand {\ggtc} {ggtc}

\newcommand {\gp}  {{\operatorname{gp}}}

\newcommand {\Hom}  {\operatorname{Hom}}
\newcommand {\twoHom}  {\operatorname{2-Hom}}

\newcommand {\id}  {\operatorname{id}}
\newcommand {\im}  {\operatorname{im}}

\newcommand {\LS}{\operatorname{LS}_{k^\dagger}}

\renewcommand{\O}  {\mathcal{O}}

\newcommand {\Ob}  {\operatorname{Ob}}

\newcommand {\one}{\mathbf{1}}   

\newcommand {\Pic} {\operatorname{Pic}}
\newcommand {\PPic}{\operatorname{\underline{Pic}}}
\newcommand {\shPPic}{{\underline{\shP ic}}}

\newcommand {\rk} {\operatorname{rk}}

\newcommand {\Sing} {\operatorname{Sing}}

\newcommand {\Spec} {\operatorname{Spec}}

\newcommand {\W} {\mathscr{W}}

\def\mydate{\ifcase\month \or January\or February\or March\or
April\or May\or June\or July\or August\or September\or October\or 
November\or December\fi \space\number\day,\space\number\year}

\makeatletter
\newcommand{\labeltext}[2]{%
  \@bsphack
  \csname phantomsection\endcsname 
  \def\@currentlabel{#1}{\label{#2}}%
  \@esphack
}
\makeatother


\makeatletter
\let\oldcite\cite
\def\@newcite[#1]#2{\oldcite{#2},\,#1}
\renewcommand{\cite}{\@ifnextchar[{\@newcite}{\oldcite}}
\makeatother


\begin{document}
\bibliographystyle{alpha}

\title[How to make log structures]
{How to make log structures}

\date{\today}

\author{Alessio Corti}
\address{Department of Mathematics\\ Imperial College London\\
  London SW7 2AZ, UK}
\email{a.corti@imperial.ac.uk}
\thanks{A.C.\  received support from EPSRC Programme Grant
  EP/N03189X/1.}
\author{Helge Ruddat}
\address{ Department of Mathematics and Physics\\ University of Stavanger\\ P.O. Box 8600 Forus\\ 4036 Stavanger\\ Norway}
\email{helge.ruddat@uis.no}
\thanks{H.R.\ received support from DFG grant RU 1629/4-1.}

\begin{abstract}
  We introduce the concept of a \emph{viable} \emph{generically Gorenstein
    toroidal crossing} (\ggtc{}) space $Y$. 
  This generalizes the concept of Gorenstein toroidal crossing
  scheme, which in turn generalizes that of a simple
  normal crossing scheme.
  
  On such a space $Y$, we define a sheaf $\shLS_Y$, intrinsic to $Y$,
  by means of an explicit construction. 
  Our main theorem establishes a bijection between 
  the set $\LS (Y)$ of isomorphism classes of log
  structures on $Y$ over the log point $\Spec k^\dagger$ that are compatible with the \ggtc{} 
  structure and the set $\Gamma(Y,\shLS_Y^\times)$ of nowhere vanishing global sections of $\shLS_Y$.

  The definition of $\shLS_Y$ by explicit construction permits the
  \emph{effective construction} of log structures on $Y$; it also
  enables \emph{logarithmic birational geometry}, in particular the
  construction --- in some cases --- of resolutions of singular log
  structures.
  
  Our work generalizes~\cite[Theorem~3.22]{MR2213573},
  adapting the original proof with techniques from the theory of
  \mbox{$2$-groups} and local line bundle systems.
\end{abstract}

\maketitle
\tableofcontents

\setcounter{tocdepth}{1}

\section{Introduction}
\label{sec:introduction}

\begin{assumption}
  Throughout this paper, we work over a perfect field
  $k$.\footnote{We assume that $k$ is perfect in order to use the
    Cohen structure theorem, see Setup~\ref{setup:ggtc_space}.} All
varieties, schemes and Deligne--Mumford stacks are of finite type over
$k$.
\end{assumption}

Informally speaking, a \emph{generically Gorenstein toroidal crossing
  (\ggtc{}) space} is a stratified Deligne--Mumford stack $Y$ over $k$ such that, at the
generic point of each stratum, the stabilizer is trivial and $Y$ is formally isomorphic to the
spectrum of a Stanley--Reisner ring. (See
Definition~\ref{dfn:ggtc_space} for a formal statement.)

Our concept of \ggtc{} space generalizes the concept of Gorenstein
toroidal crossing space~\cite{MR2218822}. For simplicity, the reader
may think of $Y$ as a scheme with simple normal crossing
singularities; however, our main result has a much simpler formulation
in this very special situation\footnote{For future applications, we
  need to include in our consideration a singular space like
  $(xy=0)$ in $\frac1r(1, -1, a, -a)$, which is conveniently viewed as
  a toroidal crossing Deligne--Mumford stack.}.

On such a space $Y$, we define a sheaf $\shLS_Y$, intrinsic to $Y$, by
means of an explicit construction. Our main result
Theorem~\ref{mainmaintheorem} constructs a bijection from 
  the set $\LS (Y)$ of isomorphism classes of log
  structures on $Y$ over the standard log point $k^\dagger$ compatible with the \ggtc{}
  structure to the
  set $\Gamma(Y,\shLS_Y^\times)$ of nowhere vanishing sections.

  Our construction generalizes the one given in~\cite{MR2213573} in
  the case when $Y$ is a union of toric varieties meeting in boundary
  strata.\footnote{More precisely, in the notation of
    \cite{MR2213573}, if $Y=X_0(B, \shP, s)$.}  Even in that case, our
  result improves upon~\cite{MR2213573}, because our construction of
  the sheaf $\shLS_Y$ is more natural and thus it has an independent
  geometric interpretation: see the discussion in
  \S~\ref{sec:comments-our-result} below.\footnote{For the technically
    savvy, a further advantage of our treatment is that the ``gluing
    data'' for the space $Y$ is not explicitly an input of the
    construction: our space $Y$ comes already glued.}

To us, the key point of our formulation is that it allows us to
construct log structures on $Y$ effectively.

\smallskip

This study is motivated by our program to construct smooth (or mildly singular) 
Fano and Calabi--Yau varieties. We aim to do so by smoothing a reducible toroidal crossing space 
equipped with a compatible log structure on a dense open set whose complement $  Z  $ is of codimension two, 
while carefully controlling the geometry of $  Z  $. 
In practice, we construct a section of $  \shLS_Y  $ whose zero locus is $  Z  $.

Chan--Leung--Ma set up a framework by which one can smoothen a
singular log scheme under a list of strong assumptions
\cite{MR4643802}. The list was then verified in Felten--Filip--Ruddat
\cite{MR4304077} for schemes with Gross--Siebert-type log
singularities. The framework was subsequently refined, generalized and
placed in the context of curved Gerstenhaber differential graded
$L_\infty$ algebras by Felten \cite{MR4359498,Felten}. Our examples of
interest in the context of smoothing Fano schemes, e.g.,
\S\ref{sec:mbox3-fold-transv}, are not of Gross--Siebert type but we
expect them to fall under the notion of \emph{unisingular
  deformations}, see \S14.3 in \cite{Felten}, and be therefore amenable
to the smoothing framework.

A second motivation is the desire to work with singular log
structures, and hence for a language that allows us to speak of, and
construct explicitly, log resolutions of log structures. Our results
indeed enable us to do all this, see for example \eqref{blow-up-LS},
\S~\ref{sec-blowup-threefold} and \cite{corti2025singular}. 
These examples hint at a theory of
\emph{log crepant log resolutions} of singular log structures: a
subject that we plan to pursue in the near future.

\subsection{Informal description of results}
\label{sec:inform-descr-results}

We describe our results informally. We begin by stating informally the
definition of generically Gorenstein toroidal crossing (\ggtc{}) space
$Y$ over a field $k$; we proceed to summarise the construction of the
sheaf $\shLS_Y$; and we conclude with a discussion of our main result,
exhibiting a bijection between the set $\LS (Y)$ of isomorphism
classes of compatible log structures on $Y$ over $\Spec k^\dagger$ and
$\Gamma (Y, \shLS_Y^\times)$.

\smallskip

Let $M\cong \ZZ^r$ be a lattice of rank $r$,
$\Sigma$ a rational polyhedral fan in $M$, and $K$ a field. The
\emph{Stanley--Reisner ring} $K[\Sigma]$ is the free $K$-vector space over
the monomials $z^m$, $m\in M\cap |\Sigma|$, where
\[
  z^m\cdot z^{m'}=\left\{ \begin{array}{ll} z^{m+m'}&\hbox{if there is a cone }\sigma\in\Sigma\hbox{ that contains }m,m',\\ 
                            0& \hbox{otherwise.}\end{array}\right.
\]

We consider stratified spaces
\[
Y=\coprod_{\eta\in T} Y_\eta^\star 
\]
over $k$ with locally closed strata indexed by a finite poset
$T$.\footnote{Where $\eta_1\leq \eta_2$ if and only if
  $Y_{\eta_1}^\star$ is contained in the Zariski closure of
  $Y_{\eta_2}^\star$. We spell out our terminology on stratified
  spaces in \S~\ref{sec:stratified_spaces}.} We denote by $Y_\eta$ the
Zariski closure of $Y_\eta^\star$. We always assume that $Y$ is
reduced and equidimensional, and that the irreducible components of
$Y$ are normal. We identify a point $\eta \in T$ with the generic
point of the corresponding stratum and we denote by $T^{[c]}\subset T$
the set of strata of codimension $c$. It follows that the irreducible
components of $Y$ are the closures of the strata of codimension $0$
and
\[
Y=\bigcup_{\sigma \in T^{[0]}} Y_\sigma \, .
\]

In short, a \emph{generically Gorenstein toroidal crossing (\ggtc{})} space is a
stratified space $Y$ that, at the generic point $\eta\in Y$ of every
stratum, is formally isomorphic to the
spectrum $\Spec k(\eta) [\Sigma_\eta]$ of the Stanley--Reisner
ring (over the residue field $k(\eta)$) of
a fan $\Sigma_\eta$ in a lattice $M_\eta$ of rank
\[
  \rk (M_\eta) = \codim (\eta)\,.
\]
These data are subject to compatibility conditions that are spelled
out in Definition~\ref{dfn:ggtc_space} below: the most important
requirement is that the lattices $M_\eta$ are the stalks of a sheaf (in the Zariski
topology) $\shM$ of abelian groups on $Y$, called the \emph{relative
  ghost sheaf} of the \ggtc{} space, and that  the fans $\Sigma_\eta$
in the lattices $M_\eta$ are the stalks of a sheaf of fans. 

In the paper we always assume that $Y$ is \emph{viable}: a technical
condition that can be ignored in this informal discussion and that is
stated in Definition~\ref{dfn:viability}.

\smallskip

Given a viable \ggtc{} space $Y$, we now summarise the construction of
the sheaf $\shLS_Y$. 

\begin{dfn}
  Let $Y$ be a \ggtc{} space. A \emph{slab} is a codimension one point
  $\rho\in T^{[1]}$; a \emph{joint} is a codimension two point $\omega\in T^{[2]}$.
\end{dfn}

We define, for all $\rho \in T^{[1]}$, a line bundle $\shL_\rho$ on
$Y_\rho$ that we call a \emph{slab bundle}.  Given $\rho \in T^{[1]}$,
there are exactly two distinct $\sigma,\, \sigma' \in T^{[0]}$ such
that $Y_\rho \subset Y_{\sigma} \cap Y_{\sigma'}$.  Fix $y\in Y_\rho$
and let $\eta$ be the generic point of the stratum containing $y$, so
$\eta\leq \rho$.  We identify $\sigma,\,\sigma'$ with the
corresponding maximal cones of the fan $\Sigma_\eta$, and $\rho$ with
the submaximal cone in $\Sigma_\eta$ that it corresponds to. Next
choose $v_y\in M_\eta$ at integral affine distance $1$ from
$\rho$.\footnote{We say that $v_y$ is \emph{at integral affine distance
  $1$ from $\rho$} if $\langle d, v_y\rangle=1$, where
  $d \in \sigma^\vee \subset \Hom (M_\eta, \ZZ)$ is the primitive
  vector that pairs to zero with all points in $\rho$.}
To simplify the discussion, assume that there is a local 
isomorphism\footnote{It turns out that this assumption holds in many
  cases of interest.} (and not just a formal isomorphism)
$f_\eta\colon k(\eta)[\Sigma_\eta]_{\mathfrak{m}}\to \cO_{Y,\eta}$ (where
$\mathfrak{m}\subset  k(\eta)[\Sigma_\eta]$ is the maximal ideal at the origin).
Our notion of viability for $Y$ implies that there exists
a Zariski open neighbourhood $V_y$ of $y$ in $Y$ such that the divisor
germ $\div\bigl(f_\eta(z^{v_y})\bigr)$ lifts to a Cartier divisor
$D_{y,\sigma}$ over $Y_{\sigma}\cap V_y$ and similarly for $z^{-v_y}$
on $Y_{\sigma'}\cap V_y$.  We define the slab bundle $\shL_\rho$ on
$V_y$ to be
$$(\shL_\rho)_{|V_y}:=\shO_{Y_\sigma\cap V_y}(D_{y,\sigma})_{|Y_\rho\cap V_y}\otimes \shO_{Y_{\sigma'}\cap V_y}(D_{y,\sigma'})_{|Y_\rho\cap V_y}.$$ 
The main result of \S~\ref{sec:slab-bundles} is Lemma~\ref{lem-no-choice} stating that these local definitions glue to a global line bundle $\shL_\rho$ on $Y_\rho$.
\footnote{The tensor product of the divisor of $z^{v_y}$ with the one of
$z^{-v_y}$ is reminiscent to the tensor product of the two normal
bundles $N_{Y_\sigma} Y_\rho$ and $N_{Y_{\sigma'}} Y_\rho$, but note
that in general neither $Y_\rho$, $Y_\sigma$ nor $Y_{\sigma'}$ can be
assumed smooth.}

The sheaf $\shLS_Y$ is defined as the subsheaf of the direct sum of all the
slab bundles:
\begin{equation}
\label{shLS-eq}
\shL\shS_Y \subset \oplus_{\rho\in T^{[1]}} \shL_\rho
\end{equation}
consisting of sections that, for every joint $\omega \in T^{[2]}$,
satisfy the \emph{joint condition} that we describe next.

For every joint $\omega \in T^{[2]}$, we can identify the slabs
incident at $\omega$ with the rays of the $2$-dimensional fan
$\Sigma_\omega$ in $M_\omega$.  Let $\rho_1, \dots, \rho_n$ be a
cyclic enumeration of the slabs incident at $\omega$, and let
$d_i\in N_\omega=\Hom (M_\omega, \ZZ)$ be the primitive normal to
$\rho_i$ such that $d_i>0$ on
$\rho_{i+1}$. Corollary~\ref{cor:joint-compat} in
\S~\ref{sec:joint-condition} states that, for every joint
$\omega \in T^{[2]}$, we have a well-defined isomorphism
\[
J_\omega \colon \bigotimes_{i=1}^n d_i\otimes \shL_{\rho_i|Y_\omega} \cong
0 \otimes \shO_{Y_\omega} \quad \text{in} \quad N_\omega \otimes
\Pic Y_\omega \,.
\]

In \S~\ref{sec:joint-condition}, Definition~\ref{def_shLS},
$\shLS_Y \subset \bigoplus_{\rho\in T^{[1]}} \shL_\rho$ is defined to
be the subsheaf consisting of sections $(f_\rho)_{\rho \in T^{[1]}}$
such that for all joints $\omega$
\[
  J_\omega\big(d_i\otimes (f_{\rho_i|Y_\omega})\big)= 0 \otimes 1
\]
at the generic point $\omega$ of $Y_\omega$.

\smallskip

We conclude with a statement of our main result. We begin
with some preliminaries that are needed before we can talk about our
notion of a compatible log structure on a \ggtc{} space. A fuller
discussion, including a short summary of basic facts on log structures
and a road-map of the proof, can be found in \S~\ref{sec:stat-main-result}.

Fix a viable \ggtc{} space $Y$. Recall that a \emph{log
  structure} on $Y$ is a pair $(\foP, \alpha)$ where $\foP$ is a sheaf
(in the Zariski topology) of monoids and
$\alpha \colon \foP \to (\shO_Y, \times)$ is a homomorphism of sheaves
of monoids such that
\[
  \alpha_{|\alpha^{-1}(\shO_Y^\times)}\colon
  \alpha^{-1}(\shO_Y^\times) \to \shO_Y^\times
\]
is an isomorphism. A log scheme is a scheme equipped with a log
structure and we denote by $Y^\dagger$ a log scheme with underlying
scheme $Y$. The \emph{ghost sheaf} is the quotient
sheaf of monoids ${\overline \foP}:=\foP/\alpha^{-1}(\shO_Y^\times)$.

Recall that the \emph{standard log point} is the log scheme
$\Spec k^\dagger=(\Spec k,\foP_k)$, where $\foP_k=k^\times\times\NN$
and
\[
\alpha (a,n)=
\begin{cases}
  \text{$0$ if $n>0$ and} \\
  \text{$a$ if $n=0$.}
\end{cases}
\]
A log scheme \emph{over the standard log point} --- or simply \emph{over}
$k^\dagger$ --- is a log scheme $Y^\dagger$ equipped with a morphism
$Y^\dagger \to \Spec k ^\dagger$ to the standard log point. We denote
a log scheme over $k^\dagger$ by the symbol $Y^\dagger/k^\dagger$. A
log scheme $Y^\dagger/k^\dagger$ comes with a global section
\[
  \one_{\foP}\in\Gamma(Y,\foP),
\]
the image of $1\in \NN$. With $\one_{\overline\foP}$ the image of $\one_{\foP}$ in $\overline\foP$, the \emph{relative ghost sheaf} is
the quotient sheaf $\overline\shM = \overline\foP/\one_{\overline\foP}$. 
In our context, $\overline\shM$ is going to be a sheaf of abelian groups.

Definition~\ref{dfn:compatible} below is a precise formulation of our
notion of a \emph{compatible log structure $Y^\dagger/k^\dagger$ on
  a \ggtc{} space $Y$}. The key requirement is the datum of an identification of the relative ghost sheaf
of the log structure with the relative ghost sheaf of the \ggtc{}
space, $\overline\shM\stackrel{\cong}{\longrightarrow}\shM$. We denote by $\LS (Y)$ the
set of isomorphism classes of compatible log structures over $k^\dagger$.


\subsection{Main theorem and its discussion}
\label{sec:comments-our-result}

The main theorem is about the sheaf $\shLS_Y$ in \eqref{shLS-eq}
constructed in detail in \S~\ref{sec:construction}.  This sheaf has a
subsheaf $\shLS_Y^\times$ of nowhere zero sections.

\begin{thm}[Theorem \ref{mainmaintheorem}]
Let $Y$ be a viable \ggtc{} space, and let
$\shLS_Y\subset\bigoplus_\rho \shL_\rho$ be the subsheaf of the direct
sum of slab line bundles constructed in
\S~\ref{sec:construction}.

Denote by $\LS(Y)$ the set of isomorphism classes of log structures
on $Y$ over $k^\dagger$ compatible with the \ggtc{} structure.

The set-theoretic function
\[
  r\colon \LS(Y) \to \Gamma (Y,\shLS_Y^\times)
\]
constructed in \eqref{morphism-r} is a bijection.
\end{thm}

We prove the theorem by considering the \emph{subsheaf of regular extensions}
\[
\shExtc^1(\shM, \shO_Y^\times) \subset \shExt^1(\shM, \shO_Y^\times)
\]
that already appeared in \cite[Theorem~3.22]{MR2213573}.\footnote{Here
$\shM$ and $\shO_Y^\times$ are sheaves of abelian group and
$\shExt^1(\shM, \shO_Y^\times)$ is the sheaf of extensions in the category of
sheaves of abelian groups.} We construct a morphism of sheaves
$\varphi\colon\shLS_Y^\times \to \shExtc^1(\shM, \shO_Y^\times)$ and
show that it is bijective. 
Finally, we prove that the assignment that sends a log
structure/$k^\dagger$ to its extension class in $\shExt^1(\shM,
\shO_Y^\times)$ in fact gives a bijection $\LS (Y)\to
\Gamma\left(Y, \shExtc^1(\shM, \shO_Y^\times)\right)$, and we obtain
$r$ as the composition of two bijections. 

The content of our main theorem is not that there is \emph{some} sheaf
$\shLS_Y$ with a natural identification of $\LS (Y)$ with
$\Gamma (Y, \shLS_Y^\times)$. Indeed, it is a basic general fact that
$\LS (-)$ is a sheaf. It is not even that there is \emph{some
  construction} of a sheaf $\shLS_Y^\times$ and a bijection from $\LS (Y)$ to $\Gamma (Y, \shLS_Y^\times)$. Indeed,
for example, $\shExtc^1(\shM, \shO_Y^\times)$ is an example of such a construction.

Our point is that the statement is true with the description of $\shLS_Y$ given in
\S~\ref{sec:construction}, and that this particular description allows to construct log structures
effectively. We illustrate
this point here with a very simple example. (More examples can be found in
\S~\ref{sec:examples}.)  Consider the case --- see also
\S~\ref{sec:two-components} --- of a scheme $Y$ that is the union of
two smooth components $Y_1$, $Y_2$ meeting transversally along a
smooth irreducible divisor $D$. We show in \S~\ref{sec:two-components} that
\begin{equation}
\label{LS-normal-bundle}
\shLS_Y  \cong  (N_{Y_1} D)\otimes (N_{Y_2}D) 
\end{equation}
where $N_{Y_i} D$ denotes the \emph{normal bundle} of $D$ in
$Y_i$.\footnote{The fact that $\LS (Y)$ is in bijective
  correspondence with $\Gamma\bigl( D , ( (N_{Y_1} D)\otimes
  (N_{Y_2}D) )^\times\bigr)$ is elementary and well-known. As important original references, we recommend taking a look at Definition 1.9 in~\cite{MR0707162} and Lemma 2.2 in~\cite{MR1754621}.
Further relevant works on the sheaf $(N_{Y_1} D)\otimes (N_{Y_2}D)$
and its cousin in the normal crossing case
include~\cite{MR1312571,MR1296351,MR2218822,MR4304077}; see also Theorem 1.2 in~\cite{MR1993863}.} The description \eqref{LS-normal-bundle} is particularly useful when the line bundle $\shLS_Y = (N_{Y_1}
D)\otimes (N_{Y_2}D)$ is not trivial, and thus it does not have a
nowhere-vanishing section. Consider for example the case
when $\shLS_Y$ is, say, base point free, and let $Z\subset D$ be the
vanishing locus of a general section $s\in \Gamma (D, \shLS_Y)$ giving an
isomorphism $\shO_D(Z)\cong \shLS_Y$. In
language introduced in Definition~\ref{dfn-gs}(1), we say that such an $s$ gives a log
structure/$k^\dagger$ \emph{singular along $Z$}. The log structure in
question is the push forward to
$Y$ of the log smooth log structure that we have on $Y\setminus Z$. This push
forward log structure is rather badly behaved, for example it is not
coherent. However, it has a particularly
nice \emph{log resolution}. Indeed, let
\[
\widetilde{Y}_1=\Bl_Z Y_1
  \]
  be the blow up of $Z\subset Y_1$. The strict transform of $D$ in $\widetilde{Y}_1$ is isomorphic to $D$, so
  we can glue $\widetilde{Y}_1$ to $Y_2$ along $D$ to form a scheme $f\colon \widetilde{Y}\to
  Y$. Denoting by $E=f^{-1}Z\subset \widetilde{Y}$ the exceptional
  set, we have $\widetilde{Y}\setminus E = Y\setminus
  Z$. It is clear that
  \begin{multline*}
   N_{\widetilde{Y}_1} D=(N_{Y_1} D)(-Z), \; \text{and hence} \\  \shLS_{\widetilde{Y}} =
    (N_{\widetilde{Y}_1}D)\otimes (N_{\widetilde{Y}_2}D) = \bigl((N_{Y_1}
    D)\otimes (N_{Y_2}D)\bigr)(-Z) = \shLS_Y (-Z)= \shO_D\,.
  \end{multline*}
 All of this goes to shows that there exists a unique log structure on $\widetilde{Y}$ smooth over $k^\dagger$ and a log morphism $\widetilde{Y}^\dagger\to Y^\dagger$
 over $k^\dagger$ that, when restricted to $\widetilde{Y}\setminus E=Y\setminus Z$, is an isomorphism and so it is the log structure
  given by the section $s\in \Gamma (Y, \shLS_Y)$. We call $\widetilde{Y}^\dagger\to Y^\dagger$
   a \emph{log resolution}.\footnote{A nice property of this
   resolution is that $K_{Y^\dagger/k^\dagger}$ is $f$-trivial: we
   call a resolution with this property \emph{log crepant}.}

 Our point, again, is that it would be awkward to establish these facts
 directly from the definition of log structure, and impossible to
 derive it off the shelf from the constructions and the statements
 in~\cite{MR2213573} (because that paper assumes that all components
 of $Y$ are toric varieties).
 It is the independent geometric interpretation of the sheaf
 $(N_{Y_1} D)\otimes (N_{Y_2}D)$ as the tensor product of normal bundles of
 $D$ in $Y_1, Y_2$ that makes the verification straightforward, by tracking the
 way that normal bundles change under blow ups. Our construction of the
 sheaf $\shLS_Y$, given in \S~\ref{sec:construction} below, in the
 two-component case immediately specialises to
 $(N_{Y_1} D)\otimes (N_{Y_2}D)$. In the general case of a \ggtc{} space
 $Y$, the construction is more involved, but it retains the geometric
 interpretation, making it possible, in many cases of interest, to
 construct log structures and log resolutions effectively.

 \smallskip

 It is well-known that, when $Y$ is simple normal crossings, the sheaf
 $\shLS_Y$ is naturally isomorphic to
 $\shT^1_Y=\shExt^1_{\shO_Y} (\Omega^1_Y,\shO_Y)$, see Theorem~5.5 and
 Remark~5.1 in~\cite{MR4304077} for further references, and so
 $\shLS_Y$ is in fact a coherent sheaf in this case. However, this
 rather special situation is somewhat misleading because the joint
 condition for gluing $\shLS_Y$ from line bundles
 happens to be linear when $Y$ is normal crossing (or a product of
 normal crossing spaces) while in general it is a polynomial condition
 that results in a non-coherent sheaf, see for example
 \eqref{joint-cond-del-Pezzo}.  The precise form of this polynomiality
 was already shown in \cite[Theorem~3.22]{MR2213573} and in fact
 everything we do reduces to this explicit local description when
 choosing a log smooth chart of a compatible log structure.

\subsection{Summary of previous work}
\label{sec:relat-prev-work}

We already indicated several prior works in the semistable situation, \cite{MR0707162,MR1754621,MR1312571,MR1296351,MR2218822,MR1993863}, so we now focus on singular log structures and more general spaces.
The paper~\cite{MR2213573} is concerned with toroidal crossing spaces
$Y$ that are a union of toric varieties meeting along boundary
strata. Among many other things, for such a $Y$, that paper defines a
sheaf $\shLS_Y$ and proves a natural identification
$\LS (Y)=\Gamma(Y, \shLS_Y^\times)$. Essentially,
$\shLS_Y$ is defined to be $\shExtc^1(\shM, \shO_Y^\times) \subset \shExt^1(\shM,
\shO_Y^\times)$ --- see \S~\ref{sec:comments-our-result} above and
\S~\ref{sec:stat-main-result} --- but the paper also gives an explicit
local description~\cite[Theorem~3.22]{MR2213573} in terms of local functions that satisfy the joint
condition, and then shows~\cite[Theorem~3.28]{MR2213573}
that these local functions are sections of explicit line bundles
$\shN_\rho$ (corresponding to our $\shL_\rho$). 
The description of the sheaf $\shLS_Y$ in \cite{{MR2213573}} is sufficiently concrete to enable the effective construction of elements in $\LS(Y)$ 
when $Y$ is a union of toric varieties meeting along boundary strata.

The paper~\cite{MR2218822} introduces the notion of \emph{Gorenstein
  toroidal crossing space}, and goes on to study log structures on
these.

\subsection{Our work in relation to previous work}
\label{sec:our-work-relation}

Our definition of generic Gorenstein toroidal crossing space is a
generalization of the Gorenstein toroidal crossing spaces
of~\cite{MR2218822}.
Our work is closely related to~\cite{MR2218822}, but there are two
important differences. The first key difference
is that we work with log structures over
$k^\dagger$, where~\cite{MR2218822} works with absolute log
structures: this change of perspective is essential to the
applications that we have in mind and it results in surprising simplifications. 
The second key difference is that we require the Gorenstein toroidal
crossing condition to hold only at the generic point of every stratum,
as opposed to everywhere.

Our paper generalizes the corresponding part of~\cite{MR2213573}, from
toroidal crossing spaces that are union of toric varieties meeting
along boundary strata, to the case of viable \ggtc{} spaces. Here the
key point of our study is a more natural and more general construction of
the sheaf $\shLS_Y$.

In outline, our proof follows the proof
of~\cite[Theorem~3.22]{MR2213573}, with changes necessary to
work with our construction of the sheaf $\shLS_Y$. Indeed, our main innovation
is the construction of the sheaf $\shLS_Y$, where we use the Picard
stack to show that the local descriptions of the slab bundles
$\shL_\rho$, when formulated not in terms of functions but of
divisors, \emph{glue automatically} to give the slab bundles globally,
and that the joint conditions automatically make sense
globally. Unlike~\cite{MR2213573}, we never work with local charts for
log structures and
our approach is closer to the Deligne--Faltings view of log structures
as systems of line bundles with sections.

More detail about where exactly and how specifically we depart
from~\cite{MR2213573} can be found in the outline of the proof in
\S~\ref{sec:stat-main-result}. In particular, as was pointed out
by Bernd Siebert, our point of view in the proof of
Proposition~\ref{pro:main_th_local_case} --- where we construct
a log structure from a section of $\shLS_Y$ --- is closely related to
that of~\cite{MR2964607}. 
We learned that the approach via line bundle systems is also useful for recasting logarithmic data in symplectic-geometric terms, \cite{farajzadeh2025stable}.

\subsection{Description of contents}
\label{sec:description-contents}

In \S~\ref{sec:gener-toro-cross} we introduce \emph{generically Gorenstein
  toroidal crossing spaces} --- \ggtc{} spaces for short.  The definition
is local in nature, and we take the time to describe two global
objects that are naturally attached to them, the \emph{cone sheaf} and
the \emph{divisor system}. We also introduce a property, which we call
\emph{viability}, that allows us to do log geometry on a \ggtc{} space.
We work in the Zariski topology for
simplicity and because it is sufficient for many
applications.\footnote{In short, all the examples we are currently interested in
  are unions $Y=\cup Y_\sigma$ of irreducible components $Y_\sigma$
  each of which admits a \emph{toric model}, i.e., a birational
  morphism to a toric variety.

  For certain applications, it will be necessary eventually to work
  with monodromy in the \'etale topology, and a future theory will
  have to implement these features. However, these features
  become a hindrance when working with examples that don't need them,
  which gives us another reason for not discussing these matters
  here.}

In \S~\ref{sec:construction} we explicitly construct a sheaf
$\shLS_Y$ that naturally exists on every viable \ggtc{} space $Y$. Later
in \S~\ref{section4} we prove that this sheaf $\shLS_Y$
classifies log structures on $Y$ over the standard log point and
compatible with the \ggtc{} structure. In this paper, we aim to address a
reader whose goal is to make a log structure on $Y$ explicitly. The
most efficient way to do this is to construct a nowhere-vanishing global
section of $\shLS_Y$, and for this she only needs to know how
$\shLS_Y$ is constructed; she does not need to know the proof that $\shLS_Y$
classifies log structures. Our presentation aims to facilitate
explicit constructions.

In the final \S~\ref{sec:examples} we give some examples.

\subsection{Acknowledgements}
\label{sec:acknowledgements}

We are grateful to Simon Felten and Andrea Petracci for many useful
discussions on the subject of this paper. We thank Kevin Buzzard, Simon Felten,
Martin Olsson, Bernd Siebert, Mattia Talpo, Anna-Maria Raukh and an anonymous referee
for comments on earlier versions of this article and helpful
suggestions at various stages of the long revision process.

Kevin had a strong influence on shaping our view of the subject; in
particular, he explained to us that the words ``canonical,''
``natural,'' ``distinguished,'' etc.\ are meaningless,
see~\cite{MR4866877}. As a result of our conversations with him, we
eliminated all occurrences of these words in the text, and instead we
made the effort of stating all the properties that our constructions
need to satisfy. Kevin also explained to us some key points about
coherence theorems in monoidal categories.

The authors were hosted by Mathematisches Forschungsinstitut
Oberwolfach as Oberwolfach Research Fellows in 2021 and 2022.
Substantial parts of the work on this project were produced during
these retreats.  Gratitude for hospitality also goes to the Department
of Mathematics at Imperial College London, the University of
Hamburg and the Department of Mathematics and Physics at the
University of Stavanger.

\section{Generically Gorenstein toroidal crossing spaces}
\label{sec:gener-toro-cross}

In \S~\ref{sec:defin-gener-gorenst} we give the formal definition of generic Gorenstein
toroidal crossing (\ggtc{}) space. Basically, a \ggtc{} space is a
stratified space $Y$ such that if $\eta \in Y$ is the generic point of
a stratum, there is a fan $\Sigma_\eta$ in a lattice $M_\eta$
and an isomorphism from  the formal completion
$\widehat{k(\eta)[\Sigma_\eta]}$ at the origin to the formal
completion $\widehat{\shO_{Y,\eta}}$ at the maximal ideal. These
isomorphisms need to satisfy certain coherence
conditions that are best kept track of by a Kato fan in the sense
of~\cite[\S~4]{MR3702314}.

In \S~\ref{sec:notat-conv-fans}--~\ref{sec:basic_setup} we set out carefully our
notation and conventions on monoids, fans and stratified spaces: this
material is elementary but tedious. 

In \S~\ref{sec:viable-ggtc-spaces} we define a property that we call viability, which allows
us to do logarithmic geometry.

The definition of a \ggtc{} space is local in nature. In
\S~\ref{sec:cone_sheaf} we define two global objects that exist
on the normalization of a \ggtc{} space --- the cone sheaf and the
divisor system --- that enter crucially the construction of the slab
bundles and the precise formulation of the joint condition. 

\subsection{Notation and conventions for monoids and fans}
\label{sec:notat-conv-fans}
Our terminology on monoids
mostly follows~\cite[Ch.~I]{MR3838359}. The following summary of
terminology and notation is intended for reference, not as a complete
dictionary on monoids.  

\begin{convention} \label{conv:monoids}
  \begin{enumerate}[(1)]
  \item A \emph{lattice} is a free abelian group $M\cong \ZZ^r$ of
  finite rank $r$. We denote $M_\RR:=M\otimes_\ZZ\RR$.
\item A \emph{monoid} is a commutative semigroup with neutral
  element. Our default position is to denote the operation and unit of
  a monoid additively by $+$,
  $0$. (If $R$ is a ring, for example,  we
  may want to consider the monoid $(R,\times, 1)$.)
\item If $M$ is a lattice and $S\subset M$ a subset, we denote by $\langle S \rangle$ the
saturation in $M$ of the subgroup generated by $S$ and by
$\langle S \rangle_+$ the saturation in $M$ of the submonoid generated
by $S$. Similarly, if $\tau \subset M_\RR$, we denote $\langle \tau \rangle:=\langle \tau\cap M \rangle$ and $\langle \tau \rangle_+:=\langle \tau\cap M \rangle_+$.
\item The group of units of a monoid $P$ is denoted by $P^\times$. A
  monoid $P$ is \emph{sharp} if $P^\times =(0)$.
\item A monoid $P$ is \emph{toric} if there exist a lattice $M$ of
  finite rank and a closed convex rational polyhedral cone
  $\sigma \subset M_\RR$ such that $P=\sigma \cap M$. In this paper we
  often work with toric monoids and we almost always assume them to be
  sharp.
  \item Let $P$ be a monoid. A submonoid $F\subset P$ is a \emph{face} if the following condition is satisfied.
  For all $u, v\in P$, if $u+v\in F$, then $u,v\in F$. We write $F\leq
  P$ to mean that $F$ is a face of $P$. The notation $F<P$ means that
  $F$ is a proper face of $P$, that is, $F$ is a face and $F\neq P$.   
\item When $F\leq P$ we denote by
  $F^{-1} P$ the \emph{localization} of $P$ at $F$. We call the monoid
  homomorphism $P\to F^{-1} P$ a \emph{face localization}. When $F\leq
  P$, the \emph{quotient} $P/F$ is the monoid $F^{-1} P/F$. We call the monoid
  homomorphism $P\to P/F$ a \emph{face quotient}.
\item If $P$ is a monoid and $\textbf{1}\in P$ an element, we use $P/\textbf{1}$ denote the quotient of $P$ by the submonoid generated by $\textbf{1}$. For example $\mathbb{N}^2/(1,1)\cong\mathbb{Z}$.
\item If $P$ is a monoid, we denote by $P^\gp$ the universal (Grothendieck)
  group of $P$.
\item 
  If $R$ is a ring and $P$ a monoid, we denote by $R[P]$ the monoid ring.
  \end{enumerate}
\end{convention}

\begin{dfn}
  \label{dfn:fan}
  Let $M\cong \ZZ^r$ be a lattice.
  \begin{enumerate}[(1)]
  \item A \emph{fan in $M$} is a finite set $\Sigma$ of closed convex
    rational polyhedral cones in $M_\RR$ such that:
\begin{enumerate}[(i)]
\item For all $\tau\in \Sigma$, if
  $\mu\leq \tau$ is a face, then $\mu\in\Sigma$;
\item For all $\tau, \mu \in \Sigma$, $\tau \cap \mu$ is a face of
  both $\tau$ and $\mu$.
\end{enumerate}

\noindent The \emph{support} of the fan, denoted by $|\Sigma|\subset M_\RR$, is
the union of the cones of $\Sigma$.

\noindent The fan is said to be \emph{complete} if $|\Sigma|=M_\RR$.
\item Let $\Sigma$ be a fan and $\rho\in \Sigma$ a cone.
  \begin{enumerate}[(a)]
  \item The \emph{localization} $\rho^{-1}\Sigma$ of $\Sigma$ in
    $\rho$ is the fan in $M$ that consists of the convex cones
    $\sigma-\rho=\{x-y\in M_\RR|x\in\sigma,y\in\rho\}$ where $\sigma$
    ranges over all cones in $\Sigma$ that contain $\rho$.
\item The \emph{quotient} $\Sigma/\rho$ of $\Sigma$ by $\rho$ is the fan in
$M/\langle\rho\rangle$ obtained from the localization
$\rho^{-1}\Sigma$ by projecting each of its cones under the linear map
$M\to M/\langle\rho\rangle$.
  \end{enumerate}
\item If $\Sigma$ is a fan in $M$ and $R$ is a ring, the
\emph{Stanley--Reisner ring} $R[\Sigma]$ is the free $R$-module over
the symbols $z^m$ for those $m\in M$ which are also contained in some
cone of $\Sigma$, with multiplication defined by
\[
  z^m\cdot z^{m'}=\left\{ \begin{array}{ll} z^{m+m'}&\hbox{if there is a cone }\sigma\in\Sigma\hbox{ that contains }m,m',\\ 
                            0& \hbox{else.}\end{array}\right.
\]                         
We denote by $\fom$ the ideal generated by all the symbols $z^{m}$ with $m\neq 0$. If
$R$ is a field, $\fom$ is a maximal ideal.
  \end{enumerate}
\end{dfn}

\begin{lem}
  \label{lem:monoidsANDfans}
  There is an equivalence between the following two
  categories:
  \begin{enumerate}[(1)]
  \item The category with objects pairs $(P, \mathbf{1})$ of a sharp
    toric monoid $P$ and an element
    $\mathbf{1}\in P$ such that $P \setminus (\mathbf{1}+P)$ is the union
    of the proper faces of $P$, and morphisms face quotients.
\item The category whose objects are pairs $(\Sigma, \varphi)$ of a
  rational polyhedral fan $\Sigma$, not necessarily complete but with convex support,
and a \emph{polarization}\label{polar}, that is, a strictly convex
piecewise linear function $\varphi \colon \vert \Sigma \vert\cap M \to
\ZZ$ up to the addition of an integral linear function $M\to\ZZ$, and morphisms fan quotients.
\end{enumerate}
Under this equivalence,
\[
P\setminus (\mathbf{1}+P)= \cup_{F<P} F
\]
if and only if the fan $\Sigma$ is complete.

Also, under this equivalence, $\ZZ[P]/(z^\textbf{1})=\ZZ[\Sigma]$.
\end{lem}


\begin{proof}[Sketch of Proof]
 Starting from $(\Sigma, \varphi)$, let $P$ be the supergraph of $\varphi$ in $M \oplus \ZZ$ and
$\textbf{1}=(0,1)$.
  
Viceversa, starting from $(P, \mathbf{1})$, let $M$ be the universal
group of the quotient monoid $P /\mathbf{1}$ of $P$ by the congruence
relation generated by the submonoid $\mathbf{1}\NN$, and let $\Sigma$
be the fan in $M$ whose cones are the projections under the obvious
homomorphism $P\to M$ of the proper faces of $P$ that do not contain
$\mathbf{1}$.
\end{proof}



\subsection{Notation and conventions for stratified spaces}
\label{sec:stratified_spaces}

The next two definitions correspond to the notion of finite partition
of~\cite[\href{https://stacks.math.columbia.edu/tag/09XZ}{Tag
  09XZ}]{stacks-project}
and finite good stratification of~\cite[\href{https://stacks.math.columbia.edu/tag/09Y0}{Tag
  09Y0}]{stacks-project}.

\begin{dfn}
  Let $X$ be a topological space. A \emph{partition of} $X$ is a
  decomposition
  \[
X=\coprod_{\eta\in T} X_\eta^\star 
\]
into locally closed subsets $X_\eta$ indexed by a finite set $T$. The
$X_\eta^\star$ are called the \emph{parts} of the partition.

We denote by $X_\eta=\overline{X_\eta^\star}$ the closure of $X_\eta^\star$.
\end{dfn}

\begin{dfn}
  Let $X$ be a topological space. A \emph{good stratification} of $X$
  is a partition $X=\coprod_{\eta\in T}X_\eta^\star$ such that for all $\mu, \eta
  \in T$ we have
  \[
X_\mu^\star \cap X_\eta \neq \emptyset \quad \Rightarrow \quad
X_\mu^\star \subset X_\eta \,.
\]
The $X_\eta^\star$ are called the \emph{strata} of the
stratification.

Given a good stratification $X=\coprod_{\eta \in T}X_\eta^\star$, we obtain a
partial ordering on the index set $T$ by setting $\mu \leq \eta$ if
and only if $X_\mu^\star \subset X_\eta$. It then follows that
\[
X_\eta = \coprod_{\mu \leq \eta}X_\mu^\star  \,.
\]
\end{dfn}

\begin{dfn}
  \label{dfn:stratified_space}
  \begin{enumerate}[(1)]
  \item A \emph{space} is a scheme or
  Deligne--Mumford stack of finite type over $k$.
  \item A \emph{stratified space} is a space $Y$ endowed with a good stratification
  \[
    Y=\coprod_{\eta\in T} Y_\eta^\star
  \]
  such that all strata are irreducible. Under this assumption, $T$ is
  identified with the set of generic points of the strata, and the
  partial ordering on $T$ is induced by specialization: for all
  $\eta_1,\eta_2\in T$, $\eta_1\leq \eta_2$ if and only if $\eta_1$ is
  a specialization of $\eta_2$. 

  When $Y$ is a Deligne--Mumford
  stack, we assume in addidion that the generic points of strata have
  trivial stabilizers.
\item The
  \emph{codimension} of $\eta \in T$ is the codimension in $Y$ of
  the corresponding stratum. We denote by $T^{[i]}\subset T$ the set
  of points of codimension $i$ and we write
  \[
    Y^{[i]}=\coprod_{\eta \in T^{[i]}} Y_\eta, \qquad
    Y^{(i)}=\bigcup_{\eta \in T^{[i]}} Y_\eta \,.
  \]
  \end{enumerate}
\end{dfn}

\begin{lem}
  \label{lem:retraction}
  Let $Y=\coprod_{\eta\in T} Y_\eta^\star$ be a stratified
  space.

  The subset topology of $T\subset Y$ is the order topology: $W\subset
  T$ is open if and only if for all $\tau_1 \in W$, if $\tau_2\geq
  \tau_1$ then $\tau_2\in W$. We have:
\begin{enumerate}[label=(\roman*)]
\item For all $\eta \in T$, 
  \[
  T_{\geq \eta} = \{\mu \in T \mid \mu \geq \eta \}
  \] 
  is the smallest open subset of $T$ that contains $\eta$; 
  
 \item The inclusion 
\begin{equation*}
  a\colon T \hookrightarrow Y \quad \text{has a continuous
  retraction} \quad b\colon Y\to T
\end{equation*}
defined such that  $b(y)=\eta $ if $y\in Y^\star_\eta$;
\item The map $b$ is open and for all Zariski open subset $U\subset Y$,
  $b(U)=a^{-1}(U)=U\cap T$.
\end{enumerate} 
\end{lem}

\begin{proof}[Sketch of Proof]
  The map $b$ is continuous: indeed for all $\eta \in T$ we have that
  \[
    b^{-1} (T_{\geq \eta})=\{y\in Y \mid b(y)\geq \eta\}=\coprod_{\mu
      \geq \eta} Y_\mu^\star
  \]
   is the union of all locally closed strata that have $\eta$ in their
   Zariski closure and hence it is Zariski open in $Y$.

   Consider a Zariski open subset $U\subset Y$. If $\eta \in b(U)$,
   then that means that $Y_\eta^\star \cap U \neq \emptyset$, or,
   equivalently, that $\eta \in U$. This shows that $b(U)=a^{-1}(U)$
   and in particular it is open.
\end{proof}

\begin{dfn}
  \label{dfn:open_star} Let $Y=\coprod_{\eta\in T} Y_\eta^\star$ be a stratified
  space. For all $\eta\in T$, the \emph{open star} of $\eta$ is
  the Zariski open subset
  \[
    U_\eta =\{y\in Y \mid b(y) \geq \eta\} = b^{-1}(T_{\geq \eta}) \subset Y,
 \]
that is, the union of all strata of $Y$ that have $\eta$ in their
Zariski closure. 
\end{dfn}

\begin{cor}
  \label{cor:sheavesonT} In the situation of
  Lemma~\ref{lem:retraction}, if $\shF$ is a sheaf on $T$ then the
  sheaves $a_\star \shF$ and $b^{-1} \shF$ on $Y$ are isomorphic. 
\end{cor}

\begin{rem}
  \label{rem:sheavesonT} In the situation of
  Lemma~\ref{lem:retraction}, we often use
  Corollary~\ref{cor:sheavesonT} to identify a sheaf $\shF$ on $T$
  with a sheaf on $Y$, and we think of it as $a_\star \shF$ or
  $b^{-1}\shF$ depending of which point of view is more convenient.
\end{rem}
  
\subsection{The basic setup and assumptions for toroidal crossing
  spaces} \label{sec:basic_setup}

In what follows $Y=\coprod_{\eta\in T} Y_\eta^\star$ is a stratified space 
  satisfying the following assumptions:
  \begin{enumerate}[(1)]
  \item $Y$ is reduced, equidimensional, and the irreducible
    components of $Y$ are normal.
  \item $Y$ is normal crossing in codimension~$1$: denoting by
\[
\varepsilon \colon Y^{[0]}=\coprod_{\sigma \in T^{[0]}} Y_\sigma \longrightarrow Y
\]
the normalization, the restriction of $\varepsilon$ to
to $\varepsilon^{-1}(Y^{(1)}\setminus Y^{(2)})$ is a degree-two
disconnected finite \'etale cover over each component of the target.
\item $Y$ is the push-out of the diagram of spaces
  $Y^{[1]} \rightrightarrows Y^{[0]}$ where the two maps are obtained
  from the inclusions $Y_\rho \to Y_{\sigma}$,
  $Y_\rho \to Y_{\sigma'}$.
  \end{enumerate}

\subsection{Generically Gorenstein toroidal
  crossing space }
\label{sec:defin-gener-gorenst}

\begin{setup}
  \label{setup:ggtc_space}
  We introduce objects and notation that are used  in
  Definition~\ref{dfn:ggtc_space} below.
  
  \smallskip
  
  Fix a finite poset $T$ equipped with the order topology. 
  \begin{enumerate}[(1)]
  \item   To give a sheaf of monoids $\shP$ on $T$ is equivalent to give the
  following data subject to obvious compatibilities:
\begin{enumerate}[(i)]
\item For all $\eta \in T$ a monoid $P_\eta$, and 
\item For all $\eta_1\le \eta_2$, a \emph{generization}
homomorphism
\[
  P_{\eta_1} \to P_{\eta_2}\,.
\]
\end{enumerate}
\item A \emph{Kato fan}~\cite[\S~4]{MR3702314} is a sharp
  monoidal space that is locally isomorphic to $\Spec P$, $P$ a sharp
  toric monoid. Let $\shP$ be a sheaf of monoids on $T$ making $T$ a Kato fan.
  In particular, all the $P_\eta$ are sharp
  toric monoids, and all the generization homomorphisms
  are face quotients. It follows from
  the definition that, for all $\eta \in T$, there is a bijective
  identification of the poset of faces of
  $P_\eta$ with $\{\tau \in T \mid \tau \geq \eta\}$. 
\item Given a Kato fan $(T,\shP)$, consider a global section $\mathbf{1}\in \Gamma(T, \shP)$
  corresponding to the datum, for all $\eta \in T$, of sections
  $\mathbf{1}_\eta \in P_\eta$ such that $P_\eta \setminus (
  \mathbf{1}_\eta + P_\eta)$ is the union of all the proper faces of
  $P_\eta$. In this situation, by Lemma~\ref{lem:monoidsANDfans}, the pair
  $(\shP,\mathbf{1})$ gives rise to a sheaf of complete fans
  $\mathbf{\Sigma}$ in the sheaf of lattices
  $\shM=\shP/\mathbf{1}$. This unpacks in the data, for all
  $\eta \in T$, of a fan $\Sigma_\eta$ in
  $M_\eta = P_\eta /\mathbf{1}_\eta$, and for all $\eta_1 \leq \eta_2$
  generization maps (viewing $\eta_2\in \Sigma_{\eta_1}$)
  quotient homomorphisms $M_{\eta_1}\to
  M_{\eta_2}=M_{\eta_1}/\langle \eta_2\rangle$ identifying
  $\Sigma_{\eta_2}$ with the quotient fan $\Sigma_{\eta_1}/\eta_2$.
For all $\eta \in T$, the poset of faces of
$P_\eta$ is identified with the fan $\Sigma_\eta$, and this induces an identification:
\[
  \Sigma_\eta =\{ \tau \in T \mid \tau \geq \eta\} \,.
  \]
  \end{enumerate}
\medskip
 Now fix a stratified space $Y=\coprod_{\eta\in T} Y_\eta^\star$ satisfying the assumptions of
  Section~\ref{sec:basic_setup}. Assume that $T$ is endowed with a
  pair $(\shP, \mathbf{1})$ of a sheaf of monoids $\shP$ making $T$ a
  Kato fan and global section $\mathbf{1}\in \Gamma (Y,\shP)$ as in~(3).
  \begin{enumerate}[(4)]
  \item Fix $\eta \in T$. The space $\Spec k(\eta)[\Sigma_\eta]$ has a
    natural stratification indexed by the cones of $\Sigma_\eta$
    ordered by inclusion. For a cone $\tau\in \Sigma_\eta$ we denote
    by $O_\tau \subset \Spec k(\eta)[\Sigma_\eta]$ the
    corresponding stratum. For all $\tau_1,\tau_2\in \Sigma_\eta$,
    $\tau_1\leq \tau_2$ if and only if
    $O_{\tau_1}\subset \overline{O}_{\tau_2}$. We denote by
    $ \widehat{k(\eta)[\Sigma_\eta]}$ the formal completion at the
    origin, and, for all $\tau\geq \eta$, we denote by
    $\widehat{O_{\tau}}\subset \Spec \widehat{k(\eta)[\Sigma_\eta]}$
    the induced subscheme.
\item[(5)] Fix $\eta \in T$. The local ring
  $(\shO_{Y,\eta},\mathfrak{m}_\eta)$ of $Y$ is a local Noetherian
  $k$-algebra ($k$ is the base field) with residue field
  $k(\eta)$. We denote by $\widehat{\shO_{Y,\eta}}$ the formal
  completion at $\mathfrak{m}_{\eta}$. By the Cohen structure
  theorem~\cite[\href{https://stacks.math.columbia.edu/tag/032A}{Tag
    032A}]{stacks-project} $\widehat{\shO_{Y,\eta}}$ contains a field
  isomorphic to $k(\eta)$. For all $\tau \geq \eta$, we denote by
  $\widehat{Y_\tau}\subset \Spec \widehat{\shO_{Y,\eta}}$ the
  induced subscheme.
  \end{enumerate}
\end{setup}

\begin{dfn}
  \label{dfn:ggtc_space} Importing Setup~\ref{setup:ggtc_space}, a
  \emph{generically Gorenstein toroidal crossing space} --- \ggtc{}
  space for short --- is a tuple:
  \[
    \bigl( Y=\coprod_{\eta\in T} Y_\eta^\star,(\shP,
    \mathbf{1}),\{\widehat{f_\eta}\mid \eta\in T\}\bigr)
  \]
  of a stratified space $Y=\coprod_{\eta\in T} Y_\eta^\star$ satisfying the assumptions of
  Section~\ref{sec:basic_setup}, and:
  \begin{enumerate} [label=(\alph*)]
 \item A pair $(\shP, \mathbf{1})$ of a Zariski sheaf $\shP$ of
   monoids on $T$ making $T$ a Kato fan, and a global section
   $\mathbf{1}\in \Gamma(T, \shP)$ given by elements
   $\mathbf{1}_\eta \in P_\eta$ such that
   $P_\eta \setminus (\mathbf{1}_\eta + P_\eta)$ is the union of all
   the proper faces of $P_\eta$. The section $\mathbf{1}$ induces a
   sheaf of complete fans $\mathbf{\Sigma}$ in the sheaf of lattices
   $\shM=\shP/\mathbf{1}$;
      \item For all $\eta \in T$, a ring isomorphism:
  \[
    \widehat{f_\eta}\colon
  \widehat{k(\eta)[\Sigma_\eta]}
  \overset{\cong}{\longrightarrow} \widehat{\shO_{Y, \eta}} \, ,
  \]
\end{enumerate}
subject to the following condition: For all $\tau \in \Sigma_\eta$,
the isomorphism $\widehat{f_\eta}$ identifies $\widehat{Y_\tau^\star}$
with $\widehat{O_\tau}$.

The sheaf $\shP$ is called the \emph{ghost sheaf} of the \ggtc{}
space; $\shM$ the \emph{relative ghost sheaf}; $\widehat{f_\eta}$
the \emph{local formal frames}.
\end{dfn}

\begin{convention}
  Sometimes for simplicity we write ``let $Y$ be a \ggtc{}
  space''. When we do this, it is understod that $Y$ is a stratified
  space, and that it has a ghost sheaf, a relative ghost sheaf, etc.,
  and we take it for granted that they will be denoted by $\shP$,
  $\shM$, etc.
\end{convention}

\begin{rem}
  \label{rem:ggtc}
  
  \begin{enumerate}[(1)]
  \item The reader should not be overly concerned about the formal local
  frames. We could replace~(b) with the following (possibly more familiar) requirement: for all $\eta
  \in T$, there is a Zariski closed subset $Z_\eta \subsetneq Y_\eta^\star$
  such that, for all $y\in Y_\eta \setminus Z_\eta$, there is a
  neighbourhood $y\in W \subset Y_\eta \setminus Z_\eta$ and a smooth
  morphism
  \[
W \to \Spec k[\Sigma_\eta] \,.
\]
 These morphisms would have to satisfy a more-or-less obvious
 coherence condition.
\item Let $Y$ be a \ggtc{} space. 
For all $\eta \in T$, $\Spec \ZZ[P_\eta]$ is a Gorenstein toric
variety with reduced boundary $B=\div z^{\mathbf{1}}$.
\item In~\cite{MR2218822} ``gtc'' is an acronym of ``Gorenstein toroidal
  crossing'' in the \'etale topology. On the one hand, in this paper, we work in the
  Zariski topology. On the other hand, we only require a Gorenstein toroidal
  crossing structure at the generic points of strata.
  \end{enumerate}
\end{rem}

\begin{exa}
  In many applications of interest,\footnote{For instance if the irreducible
    components of $Y$ are toric varieties, or more generally if they
  have toric models.} the formal frame isomorphisms
  $\widehat{f_\eta}\colon \widehat{k(\eta)[\Sigma_\eta]}
  \overset{\cong}{\longrightarrow} \widehat{\shO_{Y,\eta}}$ arise from
  bona fide frame isomorphisms
  \[
f_\eta\colon k(\eta)[\Sigma_\eta]_{\mathfrak{m}}
  \overset{\cong}{\longrightarrow} \shO_{Y,\eta} \,.
  \]
  In general, however, this is not possible. Consider for example a
  situation where $Y=Y_1\cup Y_2$ is a union of two smooth elliptic
  curves that meet in a point $P$ with residue field $k=k(P)$. The
  discrete valuation ring of $P$ in either component is not isomorphic
  to $k[x]_{(x)}$.
\end{exa}

\subsection{Viable \ggtc{}  spaces}
\label{sec:viable-ggtc-spaces}

In this section we define the key notion of viability of a \ggtc{} space. 

\begin{dfn}
  \label{dfn:viability}
 Fix a \ggtc{} space $\bigl( Y=\coprod_{\eta\in T} Y_\eta^\star,(\shP,
 \mathbf{1}),\{\widehat{f_\eta}\mid \eta\in T\}\bigr)$.

  For all $\eta \in T$, $\Spec k(\eta)[\Sigma_\eta]$ has a
   stratification with strata $\{O_\tau^\star \mid \tau \in
   \Sigma_\eta\}$, $O_\tau=\overline{O_\tau^\star}$. For $\tau\in \Sigma_\eta$, denote by
   \[
\Omega_\tau = \coprod_{\tau \leq \tau^\prime} O_{\tau^\prime}^\star
\]
the open star of $\tau$, and denote by $\overline{\Omega}_\tau$
its Zariski closure. Also, denote by
\[
  \Div^+_\flat \overline{\Omega}_\tau
\]
the monoid of effective Cartier divisors on $\overline{\Omega}_\tau$
supported on $\cup_{\rho \in T^{[1]}} O_\rho$.

For $\tau \in \Sigma_\eta$ and $m\in \tau\cap M_\eta$, denote by
$z^m\in k(\eta)[\Sigma_\eta]$ the corresponding monomial. For all
$\tau \in \Sigma_\eta$ we have a monoid homomorphism
\[
\div_{\eta,\tau} \colon \langle\tau\rangle_+ \ni m \mapsto \div z^m \in
\Div^+_\flat \overline{\Omega}_\tau \, .
\]

For $m\in \langle\tau\rangle_+$, write $\div_{\eta,\tau} (m) = \sum_{\rho \in T^{[1]}} m_\rho
O_\rho$ and consider the effective divisor:
\[
  D_{\eta,\tau} (m)=\sum_{\rho \in T^{[1]}} m_\rho Y_\rho
  \quad \text{on $\overline{U}_\tau\cap U_\eta$} \,.
\]

We say that the \ggtc{} space is \emph{viable} if, for all
$\eta \in T$, for all $\tau \in \Sigma_\eta$ and
$m\in \tau \cap M_\eta$ as above, the divisor $D_{\eta,\tau} (m)$
is a Cartier divisor.
\end{dfn}

\begin{notation}
  \label{nota:viability}
Let $Y$ be a viable \ggtc{} space. For all $\eta \in T$ and $\tau \in
\Sigma_\eta$, we denote by
\[
D_{\eta, \tau} \colon \langle \tau \rangle_+ \to 
\Div^+_\flat \bigl( \overline{U}_\tau\cap U_\eta \bigr)\, .
\]
the monoid homomorphism provided by Definition~\ref{dfn:viability}.

The monoid homomorphism $D_{\eta, \tau}$ extends to a group
homomorphism that, abusing notation, we still denote by
$D_{\eta, \tau} \colon \langle \tau \rangle \to 
\Div_\flat \bigl( \overline{U}_\tau\cap U_\eta \bigr)$.
\end{notation}

\begin{rem}
  \label{rem:viability} In the situation of
  Definition~\ref{dfn:viability}, for all \ggtc{} spaces, not
  necessarily viable, it can be shown that for all $\eta \in T$,
  $\tau\in \Sigma_\eta$ and $m\in \tau \cap M_\eta$, there is a
  Zariski neighbourhood
  \[
\eta \subset W \subset \overline{U}_\tau\cap U_\eta  
  \]
  such that $D_{\eta,\tau} (m)$ is Cartier on $W$.
\end{rem}

\begin{exa}
  \label{exa:key_example}
  Consider $r,a>0$ with $\gcd (r,a)=1$ and $\chr(k)\nmid r$.
  The natural \ggtc{} structure on the $3$-dimensional Deligne--Mumford stack
  \[
    Y=(uv=0)\subset [\AA^4/\mu_r]\, ,
  \]
  where $u,v,w,z$ are coordinates on $\AA^4$ and $\mu_r$ acts with
  weights $(1,-1,a,-a)$ is viable, as we are going to explain now.  By
  assumption, there is an integer $s$ so that $sa=r-1$ modulo
  $r$. This is one of those cases where the formal frames arise from
  bona fide frame isomomorphisms: an example of a frame at $\eta=(u,v)$
  is the isomorphism
  $$f_\eta\colon \left(k(\eta)[x,y]/(xy)\right)_{(x,y)}\to \shO_{Y,\eta}, \quad x\mapsto uw^s,\ y\mapsto vz^s.$$
  There are other possibilities for frame maps. We could post-compose this frame with any equivariant change of coordinates at $\eta$ that preserves strata, e.g., $u\mapsto uw^{i}z^{j}$, $v\mapsto vw^{k}z^{l}$ with 	$(i-j)$ and $(k-l)$ divisible by $r$, so there is a variety of choices.

  Most importantly for the \ggtc{} property, the divisor
  $\div(f_\eta(x))$ equals $\div (u)$ in $\Spec\shO_{Y,\eta}$ and
  therefore extends as a Cartier divisor in the component
  $(v=0)\subset Y$,
  supported on $(u=v=0)$, even at $y=(0,0,0,0)\in (v=0)$. Indeed,
  because we are working with the Deligne--Mumford stack $Y$, rather
  than its coarse moduli space, the divisors $ \div (u), \div (v)$ are
  well-defined Cartier divisors, also at the point $y=(0,0,0,0)\in Y$:
  even though the functions $u,v$ are not well-defined, each gives
  rise to a descent datum for a Cartier divisor from $\AA^4$ to $Y$.
  These divisors are not Cartier on the coarse moduli on the other
  hand.

This example turns up very frequently in applications to smoothing
toric Fano varieties, and it is precisely to allow for this example
that we need to be working with log structures on \ggtc{} Deligne--Mumford
stacks, as opposed to just \ggtc{} schemes.  

The coarse moduli scheme of $Y$ is a \ggtc{} scheme that is
not viable if $r>1$.
\end{exa}

\begin{exa}
    It is easy to construct examples of \ggtc{} spaces that are
    not toroidal crossing spaces. For example, assume $f,g,h$ are mutually coprime and glue 
    $$X_1=\left(y g(x_1,    \dots, x_n)+f(x_1, \dots x_n)=0\right)\subset \AA^{n+1},$$ 
    $$X_2= \left(z h(x_1, \dots, x_n)+f(x_1, \dots x_n) =0\right)\subset
    \AA^{n+1}$$ 
    along the common subvariety $\left( f(x_1,\dots,
    x_n)=0\right)\subset \AA^n$. If $f(x_1,\dots,x_n)$ is reduced, then
    the total space is \ggtc{}. For it to be toroidal crossing, $X_1$, $X_2$ and their respective subset given by $f=0$ must be smooth.
\end{exa}
\subsection{The divisor system}
\label{sec:cone_sheaf}



\begin{construction}
  \label{con:cone-sheaf}
  Fix a \ggtc{} space $\bigl( Y=\coprod_{\eta\in T} Y_\eta^\star,(\shP,
  \mathbf{1}),\{\widehat{f_\eta}\mid \eta\in T\}\bigr)$. As usual we
  denote by $\shM$ the relative ghost sheaf. Strictly speaking $\shM$
  is a sheaf on $T$, but we identify it with a sheaf on $Y$ as in 
  Remark~\ref{rem:sheavesonT}.

  Denote by
  \[
\varepsilon \colon X=Y^{[0]} \to Y
\]
 the normalization. We construct a sheaf of monoids on $X$, which we
 call the cone sheaf.

 The space $X$ is naturally stratified by strata indexed by the finite
 poset
 \[
S=\{(\eta, \sigma) \mid \eta \in T,\; \sigma \in T^{[0]},\; \eta \leq
\sigma \}
\]
 where $(\eta_1, \sigma_1)\leq (\eta_2,\sigma_2)$ if
 $\sigma_1=\sigma_2$ and $\eta_1\leq \eta_2$. The stratum
 corresponding to $\xi=(\eta,\sigma)\in S$ is
 \[
   X^\star_\xi \cong Y^\star_\eta \,.
   \]

 We construct a sheaf of monoids $\shC$ on
 $S$ and we identify it with a sheaf of monoids on $X$ as in
 Remark~\ref{rem:sheavesonT}.

 For all $\xi \in S$, we need to assign a monoid $C_\xi$ and for all
 $\xi_1\leq \xi_2$ we need to assign generization homomorphisms $C_{\xi_1}\to
 C_{\xi_2}$.

 For $\xi=(\eta, \sigma)\in S$, identify $\sigma$ with a maximal cone 
 of the fan $\Sigma_\eta$ and set

 \[
C_{\xi}  = \sigma \cap M_\eta \,.
   \]
   If $\xi_1=(\eta_1,\sigma_1)\leq \xi_2=(\eta_2,\sigma_2 )$, then
   $\eta_1\leq \eta_2$ so $\Sigma_{\eta_2}=\Sigma_{\eta_1}/\eta_2$
   and the generization morphism
   \[
M_{\eta_1}\to M_{\eta_2} = M_{\eta_1}/\langle\eta_2\rangle
     \]
     maps  $C_{\xi_1}$ to $C_{\xi_2}$.

     It is clear that these assignments define a sheaf of monoids $\shC$ on $S$.    
\end{construction}

\begin{dfn}
  \label{dfn:cone-sheaf}
  Let $Y$ be a viable \ggtc{} space. The \emph{cone sheaf} is the
  subsheaf
  of monoids
  \[
    \shC \subset \varepsilon^\star \shM
  \]
  of Construction~\ref{con:cone-sheaf}.
 \end{dfn}

\begin{construction}
  \label{con:divisor_system}
  Let $Y$ be a \ggtc{} space, $\varepsilon \colon X \to Y$ the
  normalization. The space $X=\coprod_{\xi\in S} X_\xi^\star$ is stratified as
  explained in Construction~\ref{con:cone-sheaf}.  The boundary of $X$ is
  the union of the strata of codimension $\geq 1$, and we denote by
  $\shDiv^+_\flat$ the sheaf in the Zariski topology of effective
  Cartier divisors on $X$ supported on the boundary.

  Let $\shC$ be the cone sheaf on $X$. Assuming
  that $Y$ is viable, we construct a homomorphism of sheaves of
  monoids on $X$:
  \[
\widetilde{D}\colon \shC \to \shDiv^+_\flat \,,
\]
which we call the divisor system.

For $\sigma \in T^{[0]}$, denote by
$\varepsilon_\sigma \colon Y_\sigma \hookrightarrow Y$ the natural
closed immersion, so that 
  $\varepsilon=\sqcup_{\sigma \in T^{[0]}} \,\varepsilon_\sigma \colon
  X \to Y$.  

We will use the notation set out in
Construction~\ref{con:cone-sheaf}.

Denote by $\widetilde{a} \colon S \to X$ the natural inclusion and by
 $\widetilde{b}\colon X \to S$ the retraction of
 Lemma~\ref{lem:retraction}: we have that $a(\eta, \sigma)=a(\eta) \in
 Y_\sigma$, and for $x\in Y_\sigma$, $\widetilde{b} (x)=(b(x), \sigma)$.

More precisely, $\shC$ is the sheaf
on $S$ of Construction~\ref{con:cone-sheaf} and we construct a sheaf
homomorphism $\widetilde{b}^{-1} \shC \to \shDiv^+_\flat$, which is
the same as a homomorphism $\shC \to \widetilde{b}_\star
\shDiv^+_\flat$ of sheaves on $S$.

For all $\xi = (\eta, \sigma)\in S$, let
\[
\widetilde{U}_\xi = \widetilde{b}^{-1} (S_{\geq \xi}) = \{x\in Y_\sigma \mid
b(x)\geq \eta\} \,.
  \]
To construct our sheaf homomorphism, for all $\xi \in S$ we need to
assign a monoid homomorphism:
\[
\widetilde{D}_\xi \colon C_\xi \to \Div^+_\flat (\widetilde{U}_\xi)
\]
and these monoid homomorphisms need to be compatible with generization
in the obvious way.

In Definition~\ref{dfn:viability}, for $m\in C_\xi=\sigma \cap
M_\eta$ we defined a Cartier divisor
\[
D_{\eta,\sigma} (m) \in \Div^+_\flat (\overline{U_\sigma} \cap U_\eta) 
  \]
Noting that $\widetilde{U}_\xi=\varepsilon_\sigma^{-1}(\overline{U_\sigma} \cap
U_\eta)$, we define $\widetilde{D}_\xi (m)=\varepsilon_\sigma^\star
D_{\eta, \sigma} (m)$.  
\end{construction}

\begin{dfn}
  \label{dfn:divisor_system}
  Let $Y$ be a viable \ggtc{} space. The \emph{divisor system} is the
  homomorphism of Zariski sheaves of monoids on $X=Y^{[0]}$
  \[
    \widetilde{D} \colon \shC \to \shDiv^+_\flat
    \quad \text{on}\; X
  \]
  of Construction~\ref{con:divisor_system}.
\end{dfn}

\begin{rem} 
One can show that $\widetilde{D}$ is an isomorphism.
\end{rem}

\begin{notation}
  \label{nota:open-stars}
  Let $Y=\coprod_{\eta \in T} Y^\star$ be a ggtc{} space, $a\colon
  T\to Y$ the inclusion and $b\colon Y \to T$ the retraction of
  Lemma~\ref{lem:retraction}. For all $y\in Y$, the open star of $y$
  is 
  \[
    U_y=U_\eta,\quad \text{where} \quad y\in Y_\eta^\star
  \]
and $U_\eta = b^{-1} T_{\geq \eta}$ is the open star of $\eta$, cf.\
Definition~\ref{dfn:open_star}.

 Similarly let $\varepsilon \colon X=\coprod_{\xi \in S} X^\star_\xi
 \to Y$ be the normalization, stratified as in
 Construction~\ref{con:cone-sheaf}, $\widetilde{a}\colon S \to X$ the inclusion and
 $\widetilde{b}\colon X\to S$ the retraction of
 Lemma~\ref{lem:retraction}.
 For all $x\in X$, the open star of $x$ is
   \[
    \widetilde{U}_x=\widetilde{U}_\xi,\quad \text{where} \quad x\in X_\xi^\star
  \]
and $\widetilde{U}_\xi = \widetilde{b}^{-1} (S_{\geq \xi})$ is the open star of $\xi$, cf.\
Definition~\ref{dfn:open_star}.
\end{notation}

\begin{lem} 
  \label{lem-commute-res-V}
  Let $Y$ be a viable \ggtc{} space, $\varepsilon \colon X \to Y$ the
  normalization stratified as in Construction~\ref{con:cone-sheaf}. For all $y\in Y$, consider the
  fan $\Sigma_y\subset M_y$, and let $\sigma_1, \sigma_2\in \Sigma_y$ be two
  maximal cones adjacent along a submaximal cone
  $\rho = \sigma_1 \cap \sigma_2$.  Then $y\in Y_\rho$ and let
  $x_1\in Y_{\sigma_1}\subset X$, $x_2 \in Y_{\sigma_2}\subset X$ be the two
  lifts, so $C_{x_1}=\sigma_1$, $C_{x_2}=\sigma_2$ and in this sense $C_{x_1}\cap C_{x_2}=\rho$. There are obvious inclusions
\[
  \iota_1\colon Y_\rho \hookrightarrow Y_{\sigma_1} \quad \text{and}\quad
  \iota_2 \colon Y_{\rho}\hookrightarrow Y_{\sigma_2}
  \]
  and we write (following Notation~\ref{nota:open-stars})
  \[
    V_y=U_y\cap Y_\rho \,.
  \]
  Note that $V_y=\iota_1^{-1}(\widetilde{U}_{x_1})=\iota_2^{-1}(\widetilde{U}_{x_2})$. 

In this situation, denote by
$\iota_1^!\colon \Div^+_\flat \widetilde{U}_{x_1}\dashrightarrow \Div^+_\flat V_y$ the
partially defined restriction homomorphism that is defined for each
divisor that intersects $V_y$ properly, and similarly $\iota_2^!$.

The following diagram is commutative:
\[
  \xymatrix{
C_{x_1} \ar[rr]^{\widetilde{D}_{x_1}} & & \Div^+_\flat \widetilde{U}_{x_1} \ar^{\iota_1^!}@{-->}[d]\\ 
C_{x_1}\cap C_{x_2}\ar@{^{(}->}[u] \ar@{_{(}->}[d]& &     \Div^+_\flat V_y\\
C_{x_2} \ar[rr]^{\widetilde{D}_{x_2}} & & \Div^+_\flat
\widetilde{U}_{x_2}\ar_{\iota_2^!}@{-->}[u]} \,,
 \]
where we note that the restriction $\iota_1^!$ is
well-defined on $\widetilde{D}_{x_1}(C_{x_1}\cap C_{x_2})$ and, similarly,
$\iota_2^!$ is well-defined on $\widetilde{D}_{x_2} (C_{x_1}\cap C_{x_2})$).

The diagram is compatible with generization in the obvious way.
\end{lem} 

\begin{proof} Straightforward. \end{proof}

\section{Construction of the sheaf \protect$\shLS_Y$}
\label{sec:construction}
In this section, given a viable \ggtc{} space $Y$, we construct a
sheaf of sets $\shL\shS_Y$ on $Y$, intrinsic to $Y$. In
\S~\ref{section4}, we will prove that $\shLS_Y$ classifies compatible
log structures on $Y$ over $k^\dagger$. We refer to
\S~\ref{sec:inform-descr-results} for an informal summary of the
construction of $\shLS_Y$.

In \S~\ref{sec:slab-bundles}, for every slab $\rho \in T^{[1]}$, we
give the construction of the slab line bundle $\shL_\rho$ on $Y_\rho$.
In Corollary~\ref{cor:joint-compat} of \S~\ref{sec:joint-condition},
we show that, for every joint $\omega\in T^{[2]}$, the restrictions
$\shL_{\rho|Y_\omega}$ for all $\omega <\rho$ satisfy the joint
 condition. This is seen as an easy consequence of an
abstract joint condition that is stated in
Lemma~\ref{lem:abstract_joint_conditon}. In Definition~\ref{def_shLS},
the joint conditions are used to define the sheaf
$\shLS_Y$.

The construction of the slab line bundles $\shL_\rho$ and, especially,
the formulation of the joint condition, are delicate. In order to glue
the slab bundles from local data, and in order to formulate the joint
condition, we need carefully to keep track of isomorphisms between
line bundles and not just the line bundles themselves. For this reason
it is necessary that we work with the Picard \mbox{$2$-group}
$\PPic Y$ (and, implicitly, the Picard stack $\shPPic_Y$), for which
we refer the reader to~\cite[\S1.4]{MR0354654}.

\subsection{The slab line bundles}
\label{sec:slab-bundles}

In this subsection we fix throughout a viable \ggtc{} space $Y$. The
goal is to construct, for all $\rho \in T^{[1]}$, a line bundle
$\shL_\rho \in \PPic Y_\rho$ that we call a slab bundle.

Before describing the construction, we recall the following.

\begin{dfn}
  \label{dfn:2-group_homo} Let $M$ be an abelian group and
  $(\underline{A},\otimes,\mathbf{1})$ a strictly commutative \mbox{$2$-group}, where
  \begin{enumerate}[(a)]
  \item For objects $A_1$, $A_2$ of $\underline{A}$, we denote by
    \[
      t\colon A_1\otimes A_2 \overset{\cong}{\longrightarrow}
      A_2\otimes A_1
    \]
the isomorphism provided by the \mbox{$2$-group} structure; 
  \item For objects $A_1$, $A_2$, $A_3$ of $\underline{A}$, we denote
    by
    \[
      s\colon A_1\otimes (A_2\otimes A_3)
      \overset{\cong}{\longrightarrow} (A_1\otimes A_2) \otimes A_3
    \]
the isomorphism provided by the \mbox{$2$-group} structure.
  \end{enumerate}
  A \emph{\mbox{$2$-homomorphism}} $\mu\colon M \to \underline{A}$ is an assignment
  $\mu \colon M \to \Ob \underline{A} $ together with the datum, for all
  $m_1,m_2\in M$, of an isomorphism
    \[
      \mu(m_1+m_2)\overset{\cong}{\longrightarrow} \mu(m_1)\otimes
      \mu(m_2)
    \]
    such that
  \begin{enumerate}[(i)]
  \item $\mu(0)=\mathbf{1}$;
  \item for all $m_1,m_2\in M$ the following diagrams are commutative
    \[
      \xymatrix{\mu(m_1+m_2) \ar[r]\ar@{=}[d]& \mu(m_1)\otimes \mu(m_2)\ar[d]^t\\
\mu(m_2+m_1)\ar[r] & \mu(m_2)\otimes \mu(m_1)}
      \]
  \item for all $m_1,m_2,m_3\in M$ the following diagrams are
    commutative
    \[
      \xymatrix{\mu\left(m_1+(m_2+m_3)\right)\ar[r]\ar@{=}[d]&\mu(m_1)\otimes
        \mu(m_2+m_3) \ar[r]&\mu(m_1)\otimes \left(\mu(m_2)\otimes
          \mu(m_3)\right) \ar[d]^s\\
        \mu\left((m_1+m_2)+m_3\right)\ar[r]&\mu(m_1+m_2)\otimes
        \mu(m_3) \ar[r]&\left(\mu(m_1)\otimes \mu(m_2)\right)\otimes
          \mu(m_3)}
      \]
  \end{enumerate}
\end{dfn}

\begin{lem}
  \label{lem-no-choice}
  Let $Y$ be a viable \ggtc{} space and $\rho\in T^{[1]}$ a slab. Fix $y\in
  Y_\rho$, and use the notation of Lemma~\ref{lem-commute-res-V}:
  in particular, denote by $\sigma_1,\sigma_2$ the maximal cones of
  $\Sigma_y \subset M_y$ that meet along $\rho$. Denote by
  \[
\mu_i\colon M_y \to \PPic (V_y)
\]
the unique \mbox{$2$-homomorphism} such that, for $m\in C_{x_i}\subset M_y$,
$\mu_i(m)=\iota_i^\star
\bigl(\shO_{\widetilde{U}_{x_i}}\bigl(\widetilde{D}_{x_i}(m)\bigr)\bigr)$. Note
that $\mu_1=\mu_2$ on the subspace
$\langle C_{x_1}\cap C_{x_2} \rangle=\langle \rho\rangle$ and
denote this restricted \mbox{$2$-homomorphism} simply by $\mu$.

Let $d \in \sigma_2^\vee \subset \Hom (M_y, \ZZ)$ be the unique primitive vector that pairs to zero with all points in $\rho$. Next choose
$v \in M_y$ such that $\langle d, v\rangle=1$. 
We define a line bundle $\lambda (v)$ on $V_y$ as
\[
  \lambda (v)=\mu_1(-v)\otimes \mu_2(v)\,.
\]
Then:
\begin{enumerate}[(1)]
\item The line bundle $\lambda (v)$ is independent on the choice of $v$ in
the following sense: for all $r\in \langle \rho\rangle$, we construct an isomorphism
\[
\psi_r\colon \lambda (v)\overset{\cong}{\longrightarrow}\lambda (r+v)\,;
\]
\item The set of these isomorphisms has the \emph{cocycle property}:
\begin{enumerate}[(i)]
\item For all $v \in \sigma_2$ such that $\langle d, v\rangle=1$,
  $\psi_{0}=\id_{\lambda(v)}$;
\item For all $v \in \sigma_2$ such that $\langle d, v\rangle=1$, and
  for all $r,s\in \langle \rho\rangle$, the following diagram is
  commutative:
  \[
    \xymatrix{
    \lambda(v)\ar[rr]^{\psi_{r+s}}\ar[dr]_{\psi_s} & & \lambda(r+s+v) \\
    &\lambda(s+v) \ar[ur]_{\psi_r} & }
  \]
\end{enumerate}
\item Similarly, $\lambda (v)$ does not depend on the choice of numbering of
$\sigma_1$, $\sigma_2$ in the sense that if we swap numbering, then
$d$ is changed into $-d$, $v$ to $-v$, and the line bundle into
$\mu_2(v)\otimes \mu_1(-v)$.
\end{enumerate}
\end{lem}

\begin{proof} In the diagram in Figure~\ref{fig:psi_r},
  the outer pentagon is commutative
  by the pentagon axiom and the internal part of the diagram gives the
  construction of  $\psi_r\colon \lambda (v)\to \lambda (r+v)$. (Note
  that, since $r\in \langle \rho \rangle$, $\mu_1(r)=\mu_2(r)$, and we
  write simply $\mu(r)$ to signify either of these two equal line bundles.)
  \begin{figure}[h]
    \centering\vspace{-.5cm} 
  \begin{tikzpicture}[every node/.style={align=center}, arrow
    style/.style={->,>=Stealth},scale=0.8, every
    node/.style={transform shape}]
\node[align=center] (top) {$\Bigl( \mu_1(-v) \otimes \mu(-r) \Bigr) \otimes \Bigl( \mu(r) \otimes \mu_2(v) \Bigr)$};

\node[below left=1cm and -1cm of top] (midleft) {$\mu_1(-v) \otimes \Big[ \mu(-r) \otimes \Big(\mu(r) \otimes \mu_2(v)\Big) \Big]$};
\node[below right=1cm and -1cm of top] (midright) {$\Bigl[ \Bigl(\mu_1(-v) \otimes \mu(-r)\Bigr) \otimes \mu(r) \Bigr] \otimes \mu_2(v)$};

\node[below=.5cm of top] (lvr) {$\lambda(v+r)$};
\node[below=2cm of top] (lv) {$\lambda(v)$};
\draw[-] (top.south west) -- (midleft.north);
\draw[->] (top.south) -- (lvr.north);
\draw[-] (lvr.south) -- (lv.north) node[midway, left] {$\psi_r$};
\draw[-] (top.south east) -- (midright.north);

\node[below=1.2cm of midleft] (botleft) {$\mu_1(-v) \otimes \Bigl[ \Bigl(\mu(-r) \otimes \mu(r)\Bigr) \otimes \mu_2(v) \Bigr]$};
\node[below=1.2cm of midright] (botright) {$\Bigl[\mu_1(-v) \otimes  \Bigl(\mu(-r) \otimes \mu(r)\Bigr)\Bigr] \otimes \mu_2(v) $};

\draw[-] (midleft) -- (botleft);
\draw[-] (midright) -- (botright);
\draw[-] (botleft) -- (botright);

\draw[->] (botleft.north east) -- (lv.west);
\draw[->] (botright.north west) -- (lv.east);

\end{tikzpicture}
\caption{Construction of $\psi_r\colon \lambda(v)\to \lambda(r+v)$}
\label{fig:psi_r}
\end{figure}

  The statement that for all $r,s\in \langle \rho\rangle$,
  $\psi_{r+s}=\psi_r\circ\psi_s$ follows from contemplating the
  diagram in Figure~\ref{Helge's diagram}, where the outer circle is
  commutative by Mac Lane coherence theorem for monoidal
  categories. For reasons of space we are suppressing the $\otimes$
  symbols.
\begin{figure}[h]
    \centering
  \begin{tikzpicture}[every node/.style={align=center}, arrow
    style/.style={->,>=Stealth},scale=0.75, every
    node/.style={transform shape}]
\def\radius{6cm}
\node at (198:\radius) (n1) {$ \mu_1(-v)\Big\{\mu(-s)\Big[\Big(\Big\{\mu(-r) \mu(r)\Big\}\mu(s)\Big)\mu_2(v) \Big]\Big\}$};
\node at (162:\radius) (n2) {$ \mu_1(-v)\Bigg\{\Big[\mu(-s)\Big(\Big\{\mu(-r) \mu(r)\Big\}\mu(s)\Big) \Big]\mu_2(v)\Bigg\}$};
\node at (136:\radius) (n3) {$ \mu_1(-v)\Bigg\{\Big[\mu(-s)\Big(\mu(-r) \Big\{\mu(r)\mu(s)\Big\}\Big) \Big]\mu_2(v)\Bigg\}$}; 
\node at (90:\radius+.2cm) (n4) {$\mu_1(-v)\Bigg\{\Big[\Big(\mu(-s)\mu(-r)\Big) \Big\{\mu(r)\mu(s)\Big\} \Big]\mu_2(v)\Bigg\}$}; 
\node at (44:\radius) (n5) {$ \mu_1(-v)\Bigg\{\Big(\mu(-s)\mu(-r)\Big)\Big[ \Big(\mu(r)\mu(s)\Big) \mu_2(v)\Big]\Bigg\}$}; 
\node at (18:\radius) (n6) {$ \Big\{\mu_1(-v)\Big(\mu(-s)\mu(-r)\Big)\Big\} \Big[ \Big(\mu(r)\mu(s))\Big)\mu_2(v) \Big]$};
\node at (342:\radius) (n7) {$ \Big\{\Big(\mu_1(-v)\mu(-s)\Big)\mu(-r)\Big\} \Big[ \mu(r)\Big(\mu(s)\mu_2(v)\Big) \Big]$};
\node at (316:\radius) (n8) {$ \Big(\mu_1(-v)\mu(-s)\Big)\Big\{\mu(-r) \Big[ \mu(r)\Big(\mu(s)\mu_2(v)\Big) \Big]\Big\}$};
\node at (270:\radius) (n9) {$ \Big(\mu_1(-v)\mu(-s)\Big)\Big\{\Big[\mu(-r) \mu(r)\Big]\Big(\mu(s)\mu_2(v)\Big) \Big\}$}; 
\node at (224:\radius) (n10) {$ \Big(\mu_1(-v)\mu(-s)\Big)\Big\{\Big(\Big\{\mu(-r) \mu(r)\Big\}\mu(s)\Big)\mu_2(v) \Big\}$}; 
\draw (n1) to [bend left=10] (n2);
\draw (n2) to [bend left=10] (n3);
\draw (n3) to [bend left=10] ($(n4.south west)!0.25!(n4.south east)$);
\draw ($(n4.south west)!0.75!(n4.south east)$) to [bend left=10] (n5);
\draw (n5) to [bend left=10] (n6);
\draw (n6) to [bend left=10] (n7);
\draw (n7) to [bend left=10] (n8);
\draw (n8) to [bend left=10] (n9);
\draw (n9) to [bend left=10] (n10);
\draw (n10) to [bend left=10] (n1);
\node at (0:\radius +.9cm) (ll4) {\hbox{two moves}};
\node at (90:\radius -4.5cm) (ll4) {$\lambda(v)$};
\draw[arrow style] (n4) -- (ll4);
\draw[arrow style] (n2) -- (ll4);
\node at (0:\radius -4.5cm) (ll6) {$\lambda(r+s+v)$};
\draw[arrow style] (n6) -- (ll6);
\draw[arrow style] (n7) -- (ll6);
\draw (ll4) to [bend left=10] (ll6);
\node at (34:\radius -4.2cm) {$\psi_{r+s}$};
\node at (270:\radius -4.5cm) (ll9) {$\lambda(s+v)$};
\draw[arrow style] (n9) -- (ll9);
\draw[arrow style] (n10) -- (ll9);
\draw (ll6) to [bend left=10] (ll9);
\node at (330:\radius -4.4cm) {$\psi_r$};
\draw (ll9) to [bend left=20] (ll4);
\node at (180:\radius -5.2cm) {$\psi_{s}$};
\end{tikzpicture}
\caption{Proof that $\psi_{r+s}=\psi_r\circ \psi_s$.}
\label{Helge's diagram}
\end{figure}
\end{proof}

\begin{construction}
  \label{con:slab_bundle}
For every slab $\rho \in T^{[1]}$, we construct a line bundle
$\shL_\rho$ on $Y_\rho$, which we will call a slab bundle. 

Fix throughout $\rho\in T^{[1]}$.

First off, we construct a covering of $Y_\rho$ by Zariski open
subsets. Note that
\[
Y_\rho = \coprod_{\{\tau\in T\mid \tau\leq \rho\}} Y_\tau^\star
\]
is itself a stratified space. We denote by $T_\rho = \{\tau\in T\mid
\tau\leq \rho\}$ the index poset for the strata of $Y_\rho$. For all
$\tau \in T_\rho$, we denote by $V_\tau \subset Y_\rho$ the open star
of $\tau$ in $Y_\rho$. It is clear that $\{V_\tau \mid \tau \in
T_\rho\}$ is a Zariski open cover of $Y_\rho$.

Now choose a \emph{global numbering} for the maximal cones incident at $\rho$:
in other words, for all $\tau \in T_\rho$, a numbering
$\sigma_1(\tau)$, $\sigma_2(\tau)$ of the two maximal cones in
$\Sigma_\tau$ incident at $\rho$,\footnote{For all $\tau \in T_\rho$,
  we can identify $\rho$ with a submaximal cone in $\Sigma_\tau$. To be precise,
  we ought to denote this cone by $\rho_\tau$ but we abuse
  notation and denote all these cones by $\rho$ trusting that this
  will not cause confusion.} satisfying the consistency
condition: If $\tau_1\leq \tau_2$, then, for $i=1,2$, $\sigma_i(\tau_1)$ maps to
$\sigma_i(\tau_2)$ under the natural projection $M_{\tau_1} \to
M_{\tau_2}=M_{\tau_1}/\langle \tau_2\rangle$.\footnote{The existence
  of such a global numbering follows easily from the conditions
  spelled out in Section~\ref{sec:basic_setup}.} Note that there are
precisely two global numberings. 

For all $\tau$, Let
$d_\tau \in \sigma_2(\tau)^\vee \subset \Hom (M_\tau, \ZZ)$ be the
unique primitive vector that pairs to zero with all points in
$\rho$. Next choose $v_\tau \in M_\tau$ such that
$\langle d_\tau, v_\tau\rangle=1$.  Following
Lemma~\ref{lem-no-choice} with $y=\tau$, we define a line bundle
$\lambda (v_\tau)$ on $V_\tau$ as
\[
  \lambda (v_\tau)=\mu_1(\tau) (-v_\tau)\otimes \mu_2(\tau) (v_\tau)
\]
where we denote by $\mu_i(\tau) \colon M_{\tau} \to \PPic (V_\tau)$
the \mbox{$2$-homomorphisms} of Lemma~\ref{lem-no-choice}.

Next we glue the $\lambda(v_\tau)$ to a line bundle on all of
$Y_\rho$. If $\tau_1\leq \tau_2$, then
\begin{enumerate}[(1)]
\item $V_{\tau_2}\subset V_{\tau_1}$;
\item we have an identification
  $\pi=\pi_{\tau_1,\tau_2}\colon M_{\tau_1} \to M_{\tau_1}/\langle
  \tau_2\rangle =M_{\tau_2}$;
\item under the dual injection $N_{\tau_2}\subset N_{\tau_1}$,
  $d_{\tau_2}$ is identified with $d_{\tau_1}$ and
  $\langle d_{\tau_2,}\pi(v_{\tau_1}) \rangle =1$ and hence we can form
  $r_{\tau_1,\tau_2}= \pi(v_{\tau_1})-v_{\tau_2}\in \langle\rho\rangle \cap M_{\tau_2}$.
\end{enumerate}
At this point, we
\[
\text{glue $\lambda(v_{\tau_1})_{|V_{\tau_2}}=\lambda(v_{\pi(v_{\tau_1})})$ to
  $\lambda(v_{\tau_2})$ with $\psi_{r(\tau_1,\tau_2)}
  \colon \lambda(v_{\pi(v_{\tau_1})})\overset{\cong}{\longrightarrow}
  \lambda(v_{\tau_2})$}  
\]
whose construction is given by Lemma~\ref{lem-no-choice}. 

Next, we keep gluing on all the line bundles on all the $V_\tau$ formed from both
choices of numbering and all system of vectors $(v_{\tau})_{\tau \in
  M_\tau}$ as above by using the $\psi$ isomorphisms as above. The
cocycle condition ensures that all these gluings can be done
consistently.

We denote by $\shL_\rho$ the resulting line bundle on $Y_\rho$.
\end{construction}

\begin{dfn}
  \label{dfn:slab_bundle}
  Let $Y=\coprod_{\tau \in T} Y^\star_\tau$ be a viable \ggtc{} space and
  $\rho\in T^{[1]}$. We call the line bundle $\shL_\rho$ on $Y_\rho$
  of Construction~\ref{con:slab_bundle} the \emph{slab bundle}.

Abusing notation, we also denote by $\shL_\rho$ the direct image of this line bundle under the inclusion $Y_\rho\hookrightarrow Y$.
\end{dfn}

  


\subsection{The joint condition and the sheaf $\shLS_Y$}
\label{sec:joint-condition}
We show that for every joint $\omega\in T^{[2]}$ the restrictions of
all $\shL_\rho$ with $\omega<\rho$ to $Y_\omega$ satisfy a
relation. We then use this relation to define a subsheaf
$\shLS_Y \subset \bigoplus_{\rho\in T^{[1]}} \shL_\rho$.  A similar
relation first appeared as a relation of elements in a local
computation in Theorem 3.22 in \cite{MR2213573}. 

\smallskip

The following definition and theorem are stated in terms of a strictly
commutative \mbox{$2$-group} $\underline{A}$, because this is what we
need. The statement contains as a special case the situation where
$\underline{A}$ is the categorification of an abelian group $A$, where
a more concrete formulation of the definition and theorem are possible.

\begin{dfn}
  \label{dfn:contPL2homo}
  Let $M$ be a finitely generated free abelian group
  and $\Sigma$ a complete fan in $M$. Let
  $\underline{A}$ be a strictly commutative \mbox{$2$-group}.  A
  \emph{continuous piecewise linear \mbox{$2$-homomorphism}}
  $\mu\colon M\to \underline{A}$ with respect to $\Sigma$ is a
  collection of \mbox{$2$-homomorphisms}
\[
\mu_\sigma \colon M \to \underline{A}\, ,
\]
one for each maximal cone $\sigma\in\Sigma$, with the property that
whenever two maximal cones $\sigma_1,\sigma_2$ share a submaximal cone
$\rho=\sigma_1\cap\sigma_2$ then the restrictions of
$\mu_{\sigma_1}, \mu_{\sigma_2}$ to $\langle\rho\rangle$ are equal.
\end{dfn}

\begin{lem}[Abstract joint condition lemma]
  \label{lem:abstract_joint_conditon}
  Let $M$ be a lattice, $N=\Hom (M,\ZZ)$, and assume given a
  surjective homomorphism $\pi \colon M \to \overline{M}$, where
  $\overline{M}$ is a rank two lattice endowed with a complete fan
  $\overline{\Sigma}$. Endow $M$ with the fan
  \[
    \Sigma = \{\pi^{-1}(\eta)\mid \eta \in \overline{\Sigma}\}
  \]
  We denote the cones of $\Sigma$ as follows:
  \begin{itemize}
  \item the codimension-$2$ subspace (a.k.a.~joint) $\omega=\pi^{-1}\{0\}$;
  \item cyclically ordered codimension-$1$ cones (a.k.a.~slabs)
    $\rho_1, \dots, \rho_n$ incident at
    $\omega$;
  \item maximal cones $\sigma_i=\langle \rho_i\cup\rho_{i+1}\rangle_+$
    (for $i=1,\dots, n$ with the convention that $\rho_{n+1}=\rho_1$).
  \end{itemize}
  Let
  $(\underline{A},+,0_{\underline{A}})$ be a strictly commutative \mbox{$2$-group} and
  $\mu \colon M \to \underline{A}$ a continuous piecewise linear
  \mbox{$2$-homomorphism} with respect to $\Sigma$. Denote by
  $\mu_i=\mu_{\sigma_i}$ the \mbox{$2$-homomorphism} for
  $\sigma_i$ that forms part of the datum $\mu$.  Let $d_1, \dots, d_n \in N$ be
  the primitive normals to the slabs $\rho_1, \dots,\rho_n$, such that
  $d_i\ge 0$ on $\rho_{i+1}$.

  Choose $v_i\in M$ such that $\langle d_i,v_i\rangle =1$ and define
\begin{equation}
\lambda(v_i)=\mu_{i-1}(-v_i)+\mu_i(v_i)\,,
\label{abstr-lem-ai}
\end{equation}
\begin{enumerate}[(1)]
\item We construct the \emph{joint isomorphism}
\begin{equation}
  \sum_{i=1}^n d_i\otimes \lambda (v_i) \overset{\cong}{\longrightarrow}
  0 \otimes 0_{\underline{A}} 
\;\; \text{in}\;\;N\otimes \underline{A} =\underline{\twoHom} (M,\underline{A})\,,
   \label{eq:telescope}
 \end{equation}
\item The isomorphism constructed in Part~(1) does not depend on the
  choice of $v_i$ in the following sense. For all $i=1,\dots,n$ and
  $r_i\in \langle \rho_i \rangle$ we construct isomorphisms
\[
  \psi_{r_i} \colon \lambda(v_i) \overset{\cong}{\longrightarrow} \lambda(v_i+r_i)
\]
 such that the following diagram is commutative
\[
  \xymatrix{\sum_{i=1}^n d_i\otimes \lambda (v_i) \ar[dd]_{\{\psi_{r_i}\}}\ar[dr]^{\cong} & \\
    &  0 \otimes 0_{\underline{A}} \\
    \sum_{i=1}^n d_i\otimes \lambda (v_i+r_i) \ar[ur]_{\cong}  & } \,,
\]
and, furthermore, the set of these isomorphisms has the cocycle property:
\begin{enumerate}[(i)]
\item $\psi_{0}=\id $;
\item  for all $r_i,s_i\in \langle \rho_i\rangle$,
  $\psi_{r_i+s_i}=\psi_{r_i}\circ \psi_{s_i}$. 
\end{enumerate}
\item The joint isomorphism does not depend on the cyclic ordering in the sense that the isomorphism from the opposite ordering relates to the given one by a global multiplication by $(-1)$.
\end{enumerate}
\end{lem}

\begin{proof} For all $w\in M$ we produce an isomorphism
$\sum_{i=1}^n d_i(w)\lambda(v_i) \cong 0_{\underline{A}}$ in
$\underline{A}$. The key observation is that for all $i$
\[
d_i\left(-d_i(w) v_i +w \right) =
-d_i(w)\underbrace{d_i(v_i)}_{=1}+d_i(w)=0\,. 
\]
Hence $-d_i(w) v_i +w  \in \langle\rho_i\rangle$ and therefore
$\mu_{i-1}\bigl(-d_{i}(w)v_i+w\bigr)=\mu_i\bigl(-d_i(w)v_i+w\bigr)$.
Rewriting and using the natural transformations of functors provided by the monoidal structure of $\underline{A}$ yields
\begin{equation}
  d_i(w)\mu_{i-1}(-v_i) \cong d_i(w)\mu_i(-v_i) + \mu_i(w) -\mu_{i-1}(w).
\label{abstr-lem-proof-step}
\end{equation}
The sum \eqref{eq:telescope} in the assertion can be identified as a
telescoping sum when writing
 \begin{equation*}
   \begin{split}
     \sum_{i=1}^n d_i(w) \lambda (v_i)  \stackrel{\eqref{abstr-lem-ai}}{\cong}& \hspace{.5cm}d_1(w) \bigl(\mu_n(-v_1)+\mu_1(v_1)\bigr)\\[-3mm]
     & +d_2(w)\bigl(\mu_1(-v_2)+\mu_2(v_2) \bigr)\\
     & +d_3(w) \bigl(\mu_2(-v_3)+\mu_3(v_3) \bigr)\\
     & \hspace{.5cm}\vdots \\
    \stackrel{\eqref{abstr-lem-proof-step}}{\cong}& \hspace{.5cm}d_1(w)\bigl(\mu_1(-v_1)+\mu_1(v_1)\bigr)+\mu_1(w)-\mu_n(w) \\
     &+d_2(w)\bigl( \mu_2(-v_2)+\mu_2(v_2)\bigr)+\mu_2(w)-\mu_1(w)\\
     &+d_3(w)\bigl( \mu_3(-v_3)+\mu_3(v_3)\bigr)+\mu_3(w)-\mu_2(w)\\
     & \hspace{.5cm}\vdots \\
     \cong &\hspace{.5cm} 0_{\underline{A}}
   \end{split}
 \end{equation*}
 The resulting isomorphism to $0_{\underline{A}}$ does not depend on
 choices --- that is, the order of association and commutation in
 $\underline{A}$ --- by the Mac Lane coherence theorem for
 \mbox{$2$-groups}.

 For the independence on the choice of $v_i$, Part~(1) is proved in
 the same way as the corresponding statement in
 Lemma~\ref{lem-no-choice}. The proof of
 Part~(2) is straightforward and we omit the details. 
\end{proof}

\begin{cor}
  \label{cor:joint-compat}
  Let $Y$ be a viable \ggtc{} space, $\omega \in T^{[2]}$ a joint,
  and $\rho_1, \dots, \rho_n$ a cyclical ordering of the slabs
  incident at $\omega$. Let $d_1, \dots, d_n \in N_\omega$ be
  the primitive normals to the slabs $\rho_1,
  \dots,\rho_n$, such that
  $d_i\ge 0$ on $\rho_{i+1}$.

  We construct a \emph{joint isomorphism}:
\begin{equation}
J_\omega\colon \bigotimes_{i=1}^n d_i\otimes \shL_{\rho_i|Y_\omega}
\cong 0 \otimes \shO_{Y_\omega} \quad \text{in} \quad 
N_\omega \otimes \PPic Y_\omega \,.
   \label{joint-iso}
 \end{equation}
\end{cor}

\begin{proof}[Sketch of proof] 
  Fix throughout a joint $\omega \in T^{[2]}$. 

  The space $Y_\omega=\coprod_{\{\tau \in T \mid \tau \leq
  \omega\}} Y_\tau^\star$ is stratified. We denote by
$T_\omega=\{\tau \in T \mid \tau \leq \omega\}$ the index poset for
the strata of $Y_\omega$. For all $\tau\in T_\omega$, we denote by
$U_\tau \subset Y$ the open star of $\tau$ in $Y$. It is clear that
$\{U_\tau \cap Y_\omega \mid \tau \leq \omega\}$ is a Zariski open
cover of $Y_\omega$.

 Now $\Sigma_\omega$ is a fan in a rank two lattice $M_\omega$. Denote
 the cones of $\Sigma_\omega$ as follows:
 \begin{itemize}
 \item the cone $(0)$, corresponding to the joint $\omega$ itself;
 \item cyclically ordered rays $\overline{\rho}_1,\dots, \overline{\rho}_n$, a.k.a. slabs;
 \item maximal cones $\overline{\sigma}_i=\langle \overline{\rho}_i,\overline{\rho}_{i+1}\rangle_+$.
 \end{itemize}
Denote by $d_1,\dots, d_n\in N_\omega$ the primitive normals to the slabs
$\overline{\rho}_1, \dots, \overline{\rho}_n$ such that $d_i> 0$ on
$\overline{\rho}_{i+1}$. 

For all $i$, Construction~\ref{con:slab_bundle} constructs a slab
bundle $\shL_{\rho_i}$on $Y_{\rho_i}$.

The bundles $\shL_{\rho_i}$ are obtained from gluing together certain bundles
constructed on certain open covers of $Y_{\rho_i}$. We are only interested in the open subsets $U_\tau \cap
Y_{\rho_i}$ for $\tau \in T_\omega$: these open subsets don't cover
all of $Y_{\rho_i}$ but they do cover all of $Y_\omega$ and hence they
are sufficient for working with $\shL_{\rho_i|Y_\omega}$. Let us fix $\tau \in T_\omega$
and focus on one of these open subsets $U_\tau$.

We are going to apply Lemma~\ref{lem:abstract_joint_conditon} to the
situation $M=M_\tau$, $\overline{M}=M_\omega=M/\langle \omega\rangle$,
$\pi\colon M \to \overline{M}$ the projection to the quotient,
$\overline{\Sigma}=\Sigma_\omega$, and $\Sigma=\omega^{-1}
\Sigma_\tau$ the localized fan. Abusing notation slightly, we denote
by $\omega=\pi^{-1}(0)$, 
$\rho_i=\pi^{-1}(\overline{\rho}_i)$,
$\sigma_i=\pi^{-1}(\overline{\sigma}_i)$ the cones of $\Sigma$. Furthermore, we take
$\underline{A}=\PPic {Y_\omega}$, and $\mu_i\colon M \to
\underline{A}$ the unique \mbox{$2$-homomorphism} such that, for $m\in
\sigma_i\cap M$,\footnote{Recall that $\widetilde{X}$ is the
  normalization of $Y$: it is a stratified space where strata are
  pairs $(\tau, \sigma)$ of $\tau \leq \sigma\in T$ and $\sigma\in T^{[0]}$.}
\[
\mu_i(m) =\iota_i^\star \bigl(\cO_{\widetilde{U}_{(\tau, \sigma_i)}}\bigl(\widetilde{D}_{(\tau,\sigma_i)}(m)\bigr) \bigr)\,.
  \]

Consider local charts for the bundles $\shL_{\rho_i}$ constructed
by choosing a global numbering compatible with the cyclic order of the
rays $\rho_i$. The construction of the local
charts for $\shL_{\rho_i}$ further depend on vectors $v_i\in M$ such that $\langle
v_i,d_i\rangle=1$. The corresponding local chart for
$\shL_{\rho_i|U_\tau \cap Y_\omega} $ is $\lambda(v_i)$.
Lemma~\ref{lem:abstract_joint_conditon} then gives a joint isomorphism
\[
  J_\tau (\mathbf{v})
  \colon \bigotimes_{i=1}^n d_i\otimes \lambda(v_i)
  \cong 0 \otimes \shO_{U_\tau \cap Y_\omega}
  \quad \text{in} \quad N_\omega \otimes \PPic
(U_\tau \cap Y_\omega)
\]
defined locally on $U_\tau$ and depending on $\textbf{v}=(v_1,\dots
v_n)$ and the global numbering.

We need to prove that these local joint isomorphism glue to
give a global joint isomorphism. For this purpose, for all $\tau \leq
\tau^\prime$ (the case $\tau=\tau^\prime$ is included!), we need to check that
\[
J_{\tau} (\textbf{v})_{|U_{\tau^\prime} \cap Y_\omega}=J_{\tau^\prime}(\textbf{v}) \,.
\]
For a fixed list of vectors $\textbf{v}=(v_i)$ this is obvious, but we
also need to address the possibility of changing $\textbf{v}=(v_i)$. The
required consistency follows from the way that the local joint
condition behaves under change of $(v_i)$ given in
Lemma~\ref{lem:abstract_joint_conditon}(2). We also need to check
consistency under change of global numbering around each of the
$\rho_i$; this follows from Lemma~\ref{lem-no-choice}, Part~(3) and Lemma~\ref{lem:abstract_joint_conditon}, Part~(3).
\end{proof}

\begin{rem}
  \label{rem:joint-compat}
If $e_1,e_2$ is a lattice basis of $M_\omega$, the map $J_\omega$ is equivalent to two isomorphisms of line bundles
$$ J_{\omega,e_1}\colon \bigotimes_{i=1}^n (\shL_{i|Y_\omega})^{\otimes d_i(e_1)} \cong \shO_{Y_\omega};\qquad  J_{\omega,e_2}\colon \bigotimes_{i=1}^n (\shL_{i|Y_\omega})^{\otimes d_i(e_2)} \cong \shO_{Y_\omega}.$$   
\end{rem}

\begin{dfn}
  \label{def_shLS}
  We define the sheaf of sets $\shLS_Y$ as the subsheaf of the direct
  sum $\bigoplus_{\rho\in T^{[1]}} \shL_\rho$ on $Y$ satisfying the
  following condition.  For every point $y\in Y$, the stalk $\shLS_y$
  consist of those tupels of sections $(f_\rho)_{\rho\in T^{[1]}}$
  whose restrictions to $Y_\omega$ for every $\omega\in T^{[2]}$
  satisfies the \emph{joint condition}
  \[
    J_\omega\big(d_i\otimes (f_{i|Y_\omega}) \big)= 0 \otimes 1
  \]
at the generic point $\omega$ of $Y_\omega$.
\end{dfn}

\begin{notation}
  \label{nota:never_vanish}
  We denote by
  \[
    \shLS^\times_Y =\shLS_Y\cap \bigl(\oplus_{\rho\in
      T^{[1]}}\shL_\rho^\times\bigr) 
\]
  the subsheaf of nowhere vanishing sections.
\end{notation}

\begin{rem}
  \label{rem:shLS}
  \begin{enumerate}[(1)]
  \item In the special situation where $T^{[2]}=\emptyset$, we get
    $\shLS_Y=\bigoplus_{\rho\in T^{[1]}} \shL_\rho$ and in this
    situation $\shLS_Y$ is a coherent sheaf. It can be seen that, more
    generally, if $Y$ is a simple normal crossing scheme, then
    $\shLS_Y$ is a line bundle on $Y^{(1)}$.  
\item In Definition~\ref{def_shLS} we require the joint condition to
  hold at the generic point of $Y_\omega$ only. The only reason for
  not requiring it everywhere is to allow the sections $f_\rho$ to
  vanish somewhere.
  \end{enumerate}
\end{rem}

\section{$\shLS_Y^\times$ classifies log structures on $Y$}
\label{section4}

\subsection{Statement of the main result and road-map of its proof}
\label{sec:stat-main-result}

In this section, we prove the main result of the paper,
Theorem~\ref{mainmaintheorem}. Before we can give the precise
statement, we need to recall a few notions about log structures and
define some key concepts that enter it. The next definition is really
just meant to fix our notation.

\begin{dfn}
  \label{dfn:log_schemes}
  Let $X$ be a space. 
  \begin{enumerate}[(1)]
  \item A \emph{log structure} on $X$ is a pair $(\foP, \alpha)$ where $\foP$
is a sheaf of monoids\footnote{In this paper we take $\foP$ to be a
  sheaf in the Zariski topology. When reading this section, it helps
  to internalize early on that our sheaves are sheaves of monoids or
  groups and rarely are they coherent sheaves.} and $\alpha \colon
\foP \to (\shO_X, \times)$ is a homomorphism of sheaves of monoids
such that
\[
  \alpha_{|\alpha^{-1}(\shO_X^\times)} \colon
  \alpha^{-1}(\shO_X^\times) \to \shO_X^\times
\]
is an isomorphism.
\item A \emph{log scheme} is a pair $(X,\foP)$ of a scheme
$X$ and a log structure $\foP$. The symbol $X^\dagger$ signifies a
log scheme with underlying scheme $X$.
\item A \emph{morphism} of log schemes $f\colon(X,\foP)\to (Y,\foQ)$
  is an ordinary morphism of schemes, together with a homomorphism of
  sheaves of monoids $f^{-1}\foQ\to \foP$ that commutes with
  $f^{-1}\shO_Y\to \shO_X$ under the respective maps $\alpha$.
\item Let $k$ be a field. The \emph{standard log point} is the log scheme
$\Spec k^\dagger=(\Spec k,\foP_k)$, where $\foP_k=k^\times\times\NN$
and $\alpha\colon \foP_k\to k$ maps $(a,n)$ to $0$ if $n>0$ and to $a$
if $n=0$. 
\item Let $k$ be a field, $X$ a scheme over $\Spec k$, and
  $X^\dagger=(X,\foP)$ a log scheme. Note that to give a morphism
  $X^\dagger \to \Spec k ^\dagger$ is equivalent to give a global
  section $\one_{\foP}\in\Gamma(X,\foP)$ with
  $\alpha(\one_{\foP})=0$.\footnote{Given
    $X^\dagger \to \Spec k^\dagger$, $\one_\foP$ is the image of
    $1\in\NN$.}

  A log structure on $X$ \emph{over the standard log point}, or simply a log
structure on $X$ over $k^\dagger$, written
$X^\dagger/k^\dagger$, is a morphism $X^\dagger \to \Spec k^\dagger$ of
log schemes; equivalently, it is a pair $(X^\dagger, \one_{\foP})$ of a
log scheme $X^\dagger=(X,\foP)$ and section $\one_{\foP}\in \Gamma
(X,\foP)$ as just described.
\item The \emph{ghost sheaf} of a log structure $\foP$ is the quotient
sheaf ${\overline \foP}:=\foP/\alpha^{-1}(\shO_X^\times)$. 
We denote by $\one_{\overline \foP}\in \Gamma(X,{\overline \foP})$ the image of $\one_{ \foP}\in \Gamma(X,{ \foP})$.
The \emph{relative ghost sheaf} of a log scheme $X^\dagger/k^\dagger$
is the quotient sheaf $\overline\shM:=\overline\foP /\one_{\overline\foP}$.
  \end{enumerate}
\end{dfn}

Before reading the upcoming definition, the reader is advised to
rehearse the definition of viable \ggtc{} space.

\begin{dfn}
   \label{dfn:compatible}  Let  $\bigl( Y=\coprod_{\eta\in T} Y_\eta^\star,(\shP,
    \mathbf{1}),\{\widehat{f_\eta}\mid \eta\in T\}\bigr)$ be a viable \ggtc{} space. 
   \begin{enumerate}[(1)]
   \item   A \emph{log structure compatible with the \ggtc{}
       structure} on $Y$, or simply a \emph{compatible log structure}, is
   a log structure on $Y$ over $k^\dagger$,  $\bigl((Y,
   \foP), \one_{\foP}\bigr)$, together with a homomorphism of sheaves
   of monoids
\[
\psi \colon \foP \to \shP
\]
 such that:
 \begin{enumerate}[(a)]
 \item $\psi (\one_{\foP})=\one_{\shP}$ and $\psi$ induces an
   isomorphism
   \[
     \overline{\psi} \colon \foP/\shO_Y^\times = \overline{\foP}
     \overset{\cong}{\longrightarrow} \shP\]
   of ghost sheaves. In particular, $\psi$ also induces an isomorphism $\overline\shM\to \shM$ of the relative
   ghost sheaf of the log structure $Y^\dagger/k^\dagger$ to the relative ghost
   sheaf of the \ggtc{} space $Y$.
 \item For all $y\in Y$ and $p \in \foP_y$, denote by $[\psi (p)] \in
   \shM_y$ the image of $p$, and let $\tau \in \Sigma_y$ be the
   smallest cone that contains $[\psi (p)]$.\footnote{If $\psi(p)$ does
   not lie on a proper face of $\shP_y$, then of course $[\psi
   (p)]=(0)$.}

   The condition is:
   $\alpha(p)\in \shO_{Y, y}$ does not vanish identically on any
   component of $\overline{U}_\tau\cap U_y$, and
\[
\div \left( \alpha (p) \right)  = D_{\eta,\tau} ([\psi(p)]) \quad \text{in}
\quad\Div^+_\flat \bigl( \overline{U}_\tau\cap U_y \bigr)\,,
\]
where $\eta=b(y)$ and $D_{\eta,\tau} ([\psi(p)])$ is the Cartier divisor of
Definition~\ref{dfn:viability}  (its existence guaranteed by the
viability condition) and Notation~\ref{nota:viability}.
 \end{enumerate}
\item A \emph{morphism} of compatible log structures $(\foP, \psi)$,
  $(\foP^\prime,\psi^\prime)$ on $Y$ is a morphism of log structures
  $\varphi \colon \foP \to \foP^\prime$ such that for all sections
  $p\in \foP$, $\psi (p)=\psi^\prime (\varphi (p))$.
\item We denote by $\LS (Y)$ the set of isomorphism classes of
  compatible log structures on $Y$. 
  \end{enumerate}
\end{dfn}
  
\begin{thm} 
\label{mainmaintheorem}
Let $Y$ be a viable \ggtc{} space, and let
$\shLS_Y\subset\bigoplus_\rho \shL_\rho$ be the sheaf of
Definition~\ref{def_shLS}.

Denote by $\LS(Y)$ the set of isomorphism classes of log structures
on $Y$ over $k^\dagger$ compatible with the \ggtc{} structure.

The set-theoretic function
\[
  r\colon \LS(Y) \to \Gamma (Y,\shLS_Y^\times)
\]
constructed in \eqref{morphism-r} is a bijection.
\end{thm}

Next we give an outline of the proof. Details are carried out in the
subsections that follow. In outline, our proof follows closely the
proof of~\cite[Theorem~3.22]{MR2213573}; however, there are important
differences due to the fact that we start out from an
independent construction of the sheaf $\shLS_Y$.

We conclude with a synopsis of the following subsections.

\begin{proof}[Outline of the proof of Theorem~\ref{mainmaintheorem}]
Fix a viable \ggtc{} space $Y$ as above. 
A compatible log structure $Y^\dagger=(Y, \foP)$ sits in an extension
sequence
\[
0 \to \shO_Y^\times \to \foP \stackrel{\psi}\to \shP \to 0 \, .
\]
Recall that the relative ghost sheaf $\shM=\shP/\textbf{1}_\shP$ is a sheaf of groups. We
have that
\[
  \foP=\foM \times_{\shM} \shP
\]
where $\foM=\foP/\one_{\foP}$ is a sheaf of abelian groups\footnote{It is
  important to appreciate that $\foM$ is not in any sensible way a log
  structure: we lack the monoid homomorphism $\foM \to \shO_Y$.} that
is an extension in the category of sheaves of abelian groups on $Y$,
\[
0 \to \shO_Y^\times \to \foM \to \shM \to 0\, .
\]
The relative ghost sheaf $\shM$ is supported in
codimension one and $Y$ is reduced, so $\shHom(\shM, \shO_Y^\times) = 0$. The local-to-global Ext spectral sequence then gives $\Ext^1(\shM, \shO_Y^\times) =
H^0\bigl(Y, \shExt^1(\shM, \shO_Y^\times)\bigr)$.

A key point of the proof is to characterize the extensions that
give rise to compatible log structures. We introduce a subsheaf
\[
\shExtc^1(\shM, \shO_Y^\times) \subset \shExt^1(\shM, \shO_Y^\times)
\]
which we call the sheaf of \emph{regular extensions}, defined by a
prescribed asymptotic behaviour at the boundary. This subsheaf is the
same as the corresponding subsheaf defined in \cite{MR2213573} by
fixing a \emph{ghost type}. Here, we get around choosing local charts
for the log structure by defining the intrinsic data of the
\emph{local divisor system}
(Def.~\ref{dfn:local_divisor_system}) and embedding it in the
total ring of fractions.

The next step is to construct a sheaf homomorphism
\[
  \varphi\colon \shExtc^1(\shM, \shO_Y^\times)\to \shLS^\times_Y \, .
\]
Here we need to depart from~\cite{MR2213573}: we introduce the
\emph{local line bundle system} (Def.~\ref{dfn:local-line-bundle}) and
construct $\varphi$ as an application of the abstract joint condition
Lemma~\ref{lem:abstract_joint_conditon}. 

While the proof of injectivity of $\varphi$ is very similar
to~\cite[Theorem~3.22]{MR2213573}, the proof of surjectivity
departs somewhat from \cite{MR2213573}. It
requires us to introduce \mbox{$2$-homomorphisms} from a lattice into the local line
bundle system (Proposition~\ref{PLTY}), sections of these and then gluing them using the abstract joint condition once more.

Finally we construct \[
  \psi\colon \LS(Y) \to \shExtc^1(\shM, \shO_Y^\times)
\] and prove that it is also bijective. Composing with $\varphi$ gives a 
bijection
\[
r\colon 
\LS (Y)\overset{\cong}{\longrightarrow}\shLS^\times_Y 
\]

This \mbox{$2$-homomorphism} in the surjectivity of $\varphi$ also plays a central role in the
construction of a log structure from a section of $\shLS_Y$, see
Proposition~\ref{pro:main_th_local_case}. Our point of view in the proof of
Proposition~\ref{pro:main_th_local_case} closely matches that
of~\cite{MR2964607} as was pointed out to us by Bernd Siebert. A log structure in \cite{MR2964607} is defined as a certain
symmetric monoidal functor. In fact, our \mbox{$2$-group} $\uT(Y)$ of
\S~\ref{sec:homom-varphi-colon} agrees with the category $\Div(X)$
considered in~\cite[Example~2.5]{MR2964607}.
\end{proof}

\paragraph{\emph{Synopsis of the following sections}}

In \S~\ref{sec:frac_ideals} we recall the basics of total rings of
fractions and fractional ideals.

In \S~\ref{sec:proof-that-shls} we work in the affine local situation
$y\in Y$ and we construct a canonical resolution of $\shM$ (in the
category of sheaves of abelian groups on $Y$).

In \S~\ref{sec:sheaf-shextc1shm-sho} we use this resolution to compute
$\shExt^1(\shM, \shO_Y^\times)$ in the affine local situation, and to
define the subsheaf $\shExtc^1(\shM, \shO_Y^\times)$. The local
properties that define it in fact define it for every $Y$, not
necessarily local.

In \S~\ref{sec:homom-varphi-colon} we define a homomorphism
$\varphi \colon \shExtc^1(\shM, \shO_Y^\times)\to \shLS_Y$ in the
affine local situation. In \S~\ref{sec:glob-varphi-non} we show that
the definition globalizes to all $Y$, not necessarily local.

In \S~\ref{sec:varphi-an-isom} we show that $\varphi \colon
\shExtc^1(\shM, \shO_Y^\times)\to \shLS_Y$ is injective; in
\S~\ref{sec-surjective} we show that it is surjective.

In the final two sections we complete the proof of the main theorem:
\S~\ref{sec:local-sect-prot} does it in the affine local situation,
and \S~\ref{sec:glob-log-struct} does it in the general case.

\subsection{Sheaf of total ring of fractions and invertible fractional
  ideals} 
\label{sec:frac_ideals}
For a scheme $X$, the sheaf of total rings of fractions $\shK_X$ is the sheafification of the presheaf
$U\mapsto S(U)^{-1} \Gamma(U,\shO_U)$
where $S(U)\subset \Gamma(U,\shO_U)$ is the subset of those non-zerodivisors of $\Gamma(U,\shO_U)$ that are also non-zerodivisors at every stalk of $\shO_U$.
If $U$ is affine then the stalk condition can be shown to be vacuous and $S(U)$ is just the set of non-zerodivisors in $\Gamma(U,\shO_U)$, see p.204 in \cite{MR570309}.
The case that concerns us is when $X$ is a reduced scheme whose set of irreducible components is locally finite, case (b) on p.205 in \cite{MR570309}. In this case,
$$\shK_X = j_*(\shO_{X|\operatorname{Ass}(X)})$$
where $j\colon {\operatorname{Ass}}(X)\to X$ is the inclusion of the set of points $x\in X$ for which the maximal ideal in $\shO_{X,x}$ is associated to zero.
The subsheaf of groups consisting of sections that are nowhere zero-divisors is denoted $\shK_X^\times \subset \shK_X$.
 \begin{exa}
   Consider $X=\Spec A$ where $A=k[x,y]/(xy)$, so $X$ has
   irreducible components $X_1=(y=0)$ and $X_2=(x=0)$.  For all
   $a,b\in k[x]$, $c,d \in k[y]$ with $b,d\neq 0$, consider the
   rational function $f=\frac{ax+cy}{bx+dy} \in K(A)$. Then
   $f_{\vert X_1}=\frac{a}b$ and $f_{\vert X_2}=\frac{c}d$, so
   $f=(\frac{a}b,\frac{c}d)$ under the natural isomorphism
   $K(A)=k(x) \times k(y)$.
 \end{exa}
Following \cite[\S\,19,\,20,\,21]{EGA_IVd},
 an \emph{invertible fractional ideal} on a scheme $X$ is a coherent $\shO_X$-submodule $\shI \subset \shK_X$ that is locally principal.
 In other words there is an affine cover of $X$ such that for all open subsets $U=\Spec A\subset X$ that belong to this cover, 
 \begin{equation}
 \label{local-fraction-ideal}
 \shI(U)=A\cdot \frac{s}{t}\subset \shK_X(U)
 \end{equation}
 where $s,t$ are both non-zerodivisors in $A$.
 The product of two fractional ideals inside $\shK_X$ is again a fractional ideal giving the set of fractional ideals the structure of an abelian group.
 The sheaf of groups of fractional ideals on $X$ is denoted by $\shI d.inv_X$. The sheaf of Cartier divisors on $X$ is, by definition, the sheaf of groups
 \[
 \shDiv_X = \shK_X^\times /\shO_X^\times.
 \]
 The natural homomorphism $\shDiv_X\to \shI d.inv_X$ is
 bijective~\cite[Proposition (21.2.6)]{EGA_IVd}.  We denote by
 $\shDiv^+_X\subset\shDiv_X$ the subsheaf of those divisors whose
 invertible sheaf has local forms \eqref{local-fraction-ideal}
 with $t=1$.  We set $\Div X=\Gamma(X,\shDiv_X)$ and
 $\Div^+\!X=\Gamma(X,\shDiv^+_X)$.

\subsection{The affine local situation}
\ \\*[3mm]  
\label{sec:proof-that-shls}

\begin{minipage}[c]{0.6\textwidth}
  \begin{setup}
  \label{setup:affine-local-situ}
   In this section we work with
  the following setup, which we refer to as the \emph{affine local
    situation}:
  \begin{enumerate}[(a)]
  \item $Y=\coprod_{\tau \in T} Y_\tau^\star$ is an affine viable \ggtc{} space;
  \item $Y$ has a unique smallest stratum $Y_\eta^\star = Y_\eta$,
    which is necessarily closed. 
  \item We write $M=M_\eta$, $\Sigma =\Sigma_\eta$, etc.
  \end{enumerate}
\end{setup}
  
The purpose of this section is to prove
Lemma~\ref{lem:resolution_of_ghost}, giving a canonical
resolution of the quotient sheaf
\[
  \shM = \shP^{\text{gp}}/\one\,.
\]
Recall $\shM$ is so defined that, for a cone $\tau \in \Sigma$ the
stalk $M_\tau$ is the quotient $M/\langle \tau
\rangle$.
\\[-2mm]
\end{minipage}
\hfill
\begin{minipage}[t]{0.35\textwidth}
  \captionsetup{width=\textwidth}
\qquad\ \includegraphics[width=.65\textwidth]{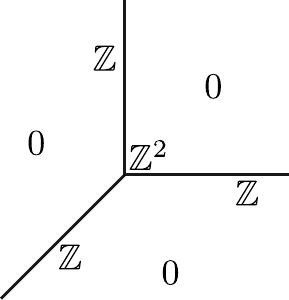}
\captionof{figure}{The stalks of the sheaf $\shM=\shP^{\text{gp}}/\one$ at various points of $T$ for the normal crossing surface $xyz=0$.}
\end{minipage}

\begin{notation}
  \label{nota:extensions-by-zero}
  For all $\tau \in \Sigma$, denote by $\underline{\langle \tau \rangle}_{U_\tau}$ the constant sheaf on
$U_\tau$ with group $\langle \tau \rangle$. Denoting by
$j_\tau \colon U_\tau \hookrightarrow Y$ the inclusion, we write:
  \[
L_!(\tau)=j_{\tau\, !} \underline{\langle \tau \rangle}_{U_\tau}
\]
where $j_{\tau\, !}$ is the extension by zero. More explicitly, if
$U\subset Y$ is a connected open subset, we have
\[
  L_!(\tau) (U)=
  \begin{cases}
    \langle\tau \rangle \quad &\text{if $U\subset U_\tau$}, \\
    0        \quad & \text{otherwise.}
  \end{cases}
\]
\end{notation}

\begin{construction}
  \label{con:resolution}
Recall that for an open embedding $j\colon U\to U'$, we have $j^*=j^!$
and hence for a sheaf $\shF$ on $U'$, we have an adjunction morphism
$j_!j^*\shF\to \shF$.  With that in mind, and using that, for
$\tau_1< \tau_2$, the open embedding
$j_{\tau_2}\colon U_{\tau_2}\to Y$ factors through
$j_{\tau_1}\colon U_{\tau_1}\to Y$, we have a natural map
$$ d_{\tau_1\tau_2}\colon j_{\tau_1\, !} \underline{\langle \tau_1 \rangle}_{U_{\tau_1}} \to j_{\tau_2\, !} \underline{\langle \tau_2 \rangle}_{U_{\tau_2}} $$
which at a stalk of a point $p\in Y$ is either the natural inclusion
$\langle \tau_1 \rangle\subset \langle \tau_2 \rangle$ or the zero
map, depending on whether $p\in U_{\tau_2}$ or not.  We define a
homomorphism
$$\delta_i\colon \bigoplus_{\tau_0\lneq ...\lneq\tau_i} L_!(\tau_i)\to \bigoplus_{\tau_0\lneq ...\lneq\tau_{i-1}} L_!(\tau_{i-1})  $$
via $\delta_i(\sum_{\tau_0\lneq ...\lneq\tau_i} a_{\tau_0\lneq ...\lneq\tau_i})=\sum_{\tau_0\lneq ...\lneq\tau_i} \delta_i(a_{\tau_0\lneq ...\lneq\tau_i})$ and a component of
$\delta_i(a_{\tau_0\lneq ...\lneq\tau_i})$ is trivial except for 
$$\delta_i(a_{\tau_0\lneq ...\lneq\tau_i})_{{\tau_0\lneq...\widehat\tau_j...\lneq\tau_{i}}} =(-1)^j a_{\tau_0\lneq ...\lneq\tau_i}$$
where $\tau_0\lneq...\widehat\tau_j...\lneq\tau_{i}$ refers to the result of removing $\tau_j$ from the sequence $\tau_0\lneq...\lneq\tau_{i}$ for $0\le j<i$; and for
\[
  \delta_i(a_{\tau_0\lneq
    ...\lneq\tau_i})_{{\tau_0\lneq...\lneq\tau_{i-1}}} =(-1)^i
  d_{\tau_{i-1}\tau_i}(a_{\tau_0\lneq...\lneq\tau_{i}})\,.
  \]
In particular,
\begin{equation}
\delta_1\left(\sum_{\tau_0\lneq \tau_1} a_{\tau_0\lneq
    \tau_1}\right)_{\tau}=  \sum_{\tau_0\lneq \tau}
a_{\tau_0\lneq\tau} - \sum_{\tau\lneq \tau_1}
d_{\tau\tau_1}(a_{\tau\lneq\tau_{1}})  \,.
\label{eq-delta1}
\end{equation}
\end{construction}

\begin{lem}
  \label{lem:resolution_of_ghost}  Let $\uM$ denote the constant sheaf on $Y$ with group $M$.
  Construction~\ref{con:resolution} gives a resolution of $\shM$ by sheaves on $Y$:
\[
\cdots \to  \bigoplus_{\tau_0\leq \tau_1} L_!(\tau_1) \stackrel{\delta_1}\longrightarrow
  \bigoplus_{\tau \in \Sigma} L_!(\tau)
  \stackrel{\delta_0}\longrightarrow \uM \to \shM \to 0 \,.
\] 
\end{lem}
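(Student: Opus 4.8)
The plan is to prove exactness stalk by stalk, since exactness of a complex of sheaves of abelian groups is a stalk-local condition. Fix a point $p\in Y$ and let $\nu=b(p)\in\Sigma$ be the cone attached to the minimal stratum through $p$. The sheaf $L_!(\tau)=j_{\tau\,!}\underline{\langle\tau\rangle}_{U_\tau}$ has stalk $\langle\tau\rangle$ at $p$ when $p\in U_\tau$ and $0$ otherwise, and by the definition of the open star $p\in U_\tau$ holds exactly when $\tau$ is a face of $\nu$; likewise the transition maps $d_{\tau_1\tau_2}$ specialise at $p$ to the inclusions $\langle\tau_1\rangle\subset\langle\tau_2\rangle$ whenever both indices survive. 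Thus, using the identification of the stratification near $y$ with the face poset of $\Sigma$ from condition~\ref{c1}, the whole sequence collapses at $p$ to a complex whose terms in positive degree are indexed by the chains of faces of $\nu$ and carry the groups $\langle\cdot\rangle$, and whose augmentation end reads
\[
\cdots\longrightarrow\bigoplus_{\tau\leq\nu}\langle\tau\rangle\stackrel{\delta_0}\longrightarrow M\longrightarrow M/\langle\nu\rangle\longrightarrow 0,
\]
where $\shM_p=M/\langle\nu\rangle$ by the description of the stalks of $\shM$ recalled in the set-up.

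First I would dispose of the right-hand end. The map $\delta_0$ is the sum of the inclusions $\langle\tau\rangle\hookrightarrow M$, so its image is $\sum_{\tau\leq\nu}\langle\tau\rangle=\langle\nu\rangle$, the largest face $\nu$ already accounting for everything. Hence $\coker\delta_0=M/\langle\nu\rangle$, which is precisely $\shM_p$, and the sequence is exact at $\uM$ and at $\shM$. It then remains to show that the truncated complex $D_\bullet$, augmented by $\delta_0\colon\bigoplus_{\tau\leq\nu}\langle\tau\rangle\to\langle\nu\rangle$, is acyclic.

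I would recognise $D_\bullet\to\langle\nu\rangle$ as the (augmented) simplicial chain complex of the order complex of the face poset $\{\tau\mid\tau\leq\nu\}$, equipped with the coefficient system $\tau\mapsto\langle\tau\rangle$ and the transition inclusions $d$. Since this poset has a greatest element, namely $\nu$, its order complex is a cone and hence contractible; the key step is to upgrade this to an explicit contracting homotopy at the level of the coefficient system. Concretely, the homotopy $h$ sends a chain not already containing $\nu$ to the chain obtained by \emph{appending} the maximal face $\nu$, transporting coefficients along the inclusions $d$ into $\langle\nu\rangle$, and kills chains already containing $\nu$; the augmentation piece $h_{-1}\colon\langle\nu\rangle\to D_0$ sends $m$ to $m$ in the summand indexed by $\nu$, so that $\delta_0 h_{-1}=\id$. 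A telescoping computation then yields $\delta h+h\delta=\id$ (after the customary degree-dependent sign on $h$), proving that the stalk complex is exact and hence that the sequence of sheaves is a resolution of $\shM$.

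The main obstacle is the bookkeeping in this last step: one must check that appending $\nu$ is compatible with the coefficient transition maps — this is exactly where the inclusions $\langle\tau\rangle\subset\langle\nu\rangle$ enter — and that the alternating signs in the definition of the $\delta_i$ make the face maps that fix the coefficient cancel in matched pairs against $h$, the only coefficient-changing face map being matched identically on both sides of $\delta h+h\delta$. Everything else is routine: the identity $\delta_i\delta_{i+1}=0$ is the standard simplicial relation, the stalk identifications of $L_!(\tau)$ and $d$ follow from the definition of $j_!$, and the compatibility of the stratification with the face poset is condition~\ref{c1}.
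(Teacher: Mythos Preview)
Your argument is correct and takes a genuinely different route from the paper. Both of you reduce to the stalk at a point $p$ with $\nu=b(p)$ and identify the resulting complex as indexed by chains of faces of $\nu$; both of you observe that exactness at $\uM$ and $\shM$ is immediate because $\sum_{\tau\leq\nu}\langle\tau\rangle=\langle\nu\rangle$. From there the arguments diverge. You build an explicit contracting homotopy by appending the maximal element $\nu$, exploiting that the face poset of $\nu$ has a greatest element so its order complex is a cone. The paper instead filters the stalk complex by the minimum dimension of the cones appearing in a chain, notes that the coefficient-changing face map of $\delta$ strictly raises this filtration, and concludes that each graded piece is a direct sum of complexes with \emph{constant} coefficients---each of which is the augmented chain complex of a poset interval with greatest element and hence exact. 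Your approach is more hands-on and yields an explicit nullhomotopy, at the cost of the sign-and-coefficient bookkeeping you flag; the paper's filtration is slicker but hides the same contractibility inside the phrase ``easily seen to be exact''. Either way the substance is that the order complex of a poset with maximum is contractible.
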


\begin{proof} Exactness is to be checked at the stalk level, so let us
  fix a point $p\in Y$.  The complex of stalks only depends on the
  stratum $Y_\tau^\star$ that contains $p$. We have $p\in U_{\tau'}$
  if an only if $\tau'<\tau$.  In particular, $L_!(\tau')_p=0$ unless
  $\tau'<\tau$.  The complex has an increasing filtration $F_k$ by
  subcomplexes given by requiring the maximum for the dimension of the
  cones in the the chain $\tau_0\lneq ...\lneq\tau_i$ to be at most $k$, that
  is, $\dim\tau_i\le k$.  To show the exactness of the original sequence,
  it suffices to show the exactness of the graded quotients with
  respect to the filtration.  The graded quotient decomposes as
  $F_k / F_{k-1} = \bigoplus_{\dim\tau=k} C_{\tau}$ and $C_{\tau}$ is
  the complex that result from applying $L_!(\tau)\otimes\cdot$ to the
  complex
  $$ ...\to\bigoplus_{\tau_0\lneq ...\lneq\tau_i=\tau}\ZZ\to...   \to\bigoplus_{\tau_0\lneq\tau_1=\tau} \to \ZZ$$
  with differential similar to $\delta_i$ as before. This complex is
  exact, as it can be identified with the augmented chain complex for
  the homology of a contractible space.
\end{proof}
\noindent\vspace{2mm}
\begin{minipage}[c]{0.6\textwidth}
\begin{dfn}
  \label{dfn:relations}
  The \emph{relation sheaf} is the sheaf $\shR$ on $Y$ defined by the
  exact sequence:
\begin{equation} \label{relation-seq}
0 \to \shR \to \uM \to \shM \to 0 \,.
\end{equation}
\end{dfn}
\subsection{The sheaf $\shExtc^1(\shM, \shO_Y^\times)$}
\label{sec:sheaf-shextc1shm-sho}

 \begin{setup}
  \label{setup:affine-local-situ-2}
   We continue with the \emph{affine local
    situation} of  Setup~\ref{setup:affine-local-situ}.
\end{setup}

In this section we define a subsheaf $\shExtc^1(\shM, \shO_Y^\times)
\subset \shExt^1(\shM, \shO_Y^\times)$, which we call the sheaf of
regular extensions, consisting of sections with a prescribed asymptotic behavior towards the Zariski
closure $\overline{U}_\tau$ of $U_\tau$ in $Y$. Before we can state
this definition, we need to discuss some preliminaries.


\end{minipage}\hspace{0.05\textwidth}
\begin{minipage}[c]{0.35\textwidth}
  \captionsetup{width=\textwidth}
\qquad\ \includegraphics[width=.65\textwidth]{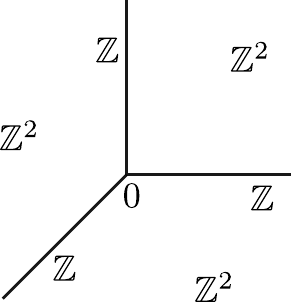}
\captionof{figure}{The stalks of the relation sheaf $\shR$ at various points of $T$ for the normal crossing surface $xyz=0$.}
\end{minipage}

\begin{lem}
  \label{lem:concrete_description}
For all cones $\tau^\prime \leq \tau$ in $\Sigma$, denote by $\rho^{\tau^\prime}_{\tau}\colon \shO_Y^\times(U_{\tau^\prime})\to \shO_Y^\times(U_{\tau})$ the restriction homomorphism.

We have the following concrete description of the relation sheaf: for every open $U\subset Y$
\begin{equation} 
\label{eq1}
\begin{split}
  \Gamma(U,&\shHom (\shR, \shO_Y^\times)) = \left\{\left. h\in\shHom
      \left( \bigoplus_\tau L_!(\tau),\shO_Y^\times
      \right)(U)\,\right|\, h_{|\im\delta_1}=0\right\}=\\
  & =\left\{ \left(h_\tau\right)_{\tau
      \in \Sigma}
    \left| \begin{array}{l} h_\tau\colon\langle \tau \rangle \to \shO_Y(U\cap U_\tau)^\times \;\text{ is a group homomorphism}\\
             \text{and for every }\tau^\prime \leq \tau \text{ we have
             }
             \rho^{\tau^\prime}_{\tau} \circ h_{\tau^\prime} =
             h_{\tau | \langle \tau^\prime \rangle}
\end{array}
    \right.
    \right\}.
\end{split}
\end{equation}   
\end{lem}

\begin{proof}
  Straightforward from Lemma~\ref{lem:resolution_of_ghost} and the
  adjoint properties of the functor $j_!$. 
\end{proof}

\begin{dfn}
\label{dfn:local_divisor_system}  
  The \emph{local divisor system} is the system of group homomorphisms
  $\{D_{\eta,\tau} |\tau \in \Sigma \}$ where, for all $\tau \in \Sigma$,
  \[
    D_{\eta,\tau} \colon \langle\tau\rangle \to \Div \overline{U}_\tau
  \]
  is the group homomorphism of Definition~\ref{dfn:viability} and Notation~\ref{nota:viability}.
\end{dfn}

\begin{notation}
  \label{nota:local_divisor_system}
  Because in this section $\eta \in T$ is the unique smallest stratum,
  in this section we suppress the subscript ``$\eta$'' from the notation and
  simply write $D_\tau$ instead of $D_{\eta, \tau}$.
\end{notation}

\begin{lem}
\label{lem-Dtau}
  The local divisor system satisfies the conditions (A),(B),(C) given
  below.
  
\begin{figure}[h]
  \includegraphics[width=.6\textwidth]{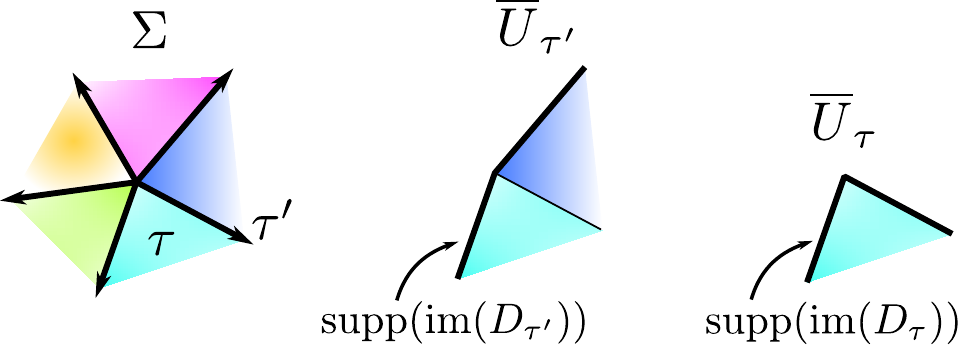}
  \caption{$\overline{U}_{\tau}\subset\overline{U}_{\tau'}$ for $\tau^\prime \leq \tau$}
    \label{fig-fan}
  \end{figure}

Note that if  $\tau^\prime \leq \tau$ then $U_{\tau^\prime} \supset
U_\tau$. In the statement of the conditions we denote by 
$\overline\rho^{\tau^\prime}_{\tau} \colon \Div \overline{U}_{\tau^\prime}
\to \Div \overline{U}_{\tau}$ the restriction of Cartier
divisors: this restriction is defined because $\overline{U}_\tau$ is a union of irreducible components
of $\overline{U}_{\tau^\prime}$. Figure~\ref{fig-fan} gives an
illustration.

The conditions are:
\begin{description}
\item[(A) Fan property] If $\tau^\prime \leq \tau$, then 
    \[
      \overline\rho^{\tau^\prime}_{\tau} \circ D_{\tau^\prime} =
      D_{\tau|\langle \tau^\prime\rangle}.\]

\item[(B) Positivity] $D_\tau (\langle \tau \rangle_+) \subset \Div^+ \overline{U}_\tau$.
  
\item[(C) Support property] The composition of $D_\tau$ with the
  restriction $\Div \overline{U}_\tau\to \Div U_\tau$ gives the
  trivial map. In particular, none of the divisors in
  $D_\tau (\langle \tau \rangle)$ contain the stratum $Y_\tau$ in
  their support; hence all of these divisors are restrictable to
  $Y_\tau$.
\end{description}
\end{lem}

\begin{proof} The statement is a
  straightforward consequence of the definition. By construction of
  $D$, see Definition~\ref{dfn:viability}, we may assume that
  $Y=\Spec k(\eta) [\Sigma_y]$ with $D_\tau(m)=\div z^m$, where the result is
  basically obvious.

  Part~(A) follows from observing that for $m\in\tau'$, by definition,
  $D_{\tau'}(m)=\div z^m\in \Div \overline{\Omega}_{\tau'}$ and so
  $\overline\rho^{\tau^\prime}_{\tau} \circ D_{\tau'}(m)$ is its
  restriction to $\overline{\Omega}_{\tau}$ which of course agrees
  with $D_{\tau}(m)=\div z^m\in \Div \overline{\Omega}_{\tau}$.

  Part~(B) is clear because $D_{\tau}(m)=\div z^m$ is a principal
  divisor of a regular function.

  Moreover, since $z^m$ is invertible
  on $\Omega_\tau$ for $m\in\tau$, its support is contained in
  $\overline{\Omega}_{\tau}\setminus \Omega_\tau$, so we deduce Part~(C).
\end{proof}

We are now ready to define the subsheaf
$\shExtc^1(\shM, \shO_Y^\times)$. As in Section~\ref{sec:frac_ideals},
we denote by $\shK_Y$ be the sheaf of total rings of fractions on $Y$
and a Cartier divisor $D$ on $Y$ gives the subsheaf
$\shO_Y (D)\subset \shK_Y$.

\begin{dfn}
  \label{dfn:regularity} 
  \begin{enumerate}[(1)]
  \item Let $U\subset Y$ be an open subset, and consider a section
    $(h_\tau)_{\tau \in \Sigma}\in \Gamma(U,\shHom (\shR,
    \shO_Y^\times))$.  Denote by
    $\widetilde{h}_\tau \colon \langle \tau \rangle \to \shK_Y
    (\overline{U}_\tau\cap U)$ the composition of
    $h_\tau\colon \langle \tau \rangle \to \shO_Y^\times(U_\tau\cap
    U)$ with the inclusion
    $\O_Y^\times (U_\tau\cap U)\hookrightarrow
    \shK_Y(\overline{U}_\tau\cap U)$.

  We say that $h$ is \emph{regular} if for all
  $\tau\in \Sigma$ and $m\in \tau$, $\widetilde{h}_\tau (m)$ is a generator
  of $\shO_{\overline{U}_\tau\cap U} \bigl( -D_\tau (m)\bigr) \subset
  \shK_{\overline{U}_\tau\cap U}$ as a $\shO_{\overline{U}_\tau\cap U}$-module. We denote by
\[
\shHom_c (\shR, \shO_Y^\times) \subset \shHom (\shR, \shO_Y^\times)
\]
the subsheaf of regular sections.
\item Consider the exact sequence:
  \begin{equation}
    \label{eq:exact_ext}
    \shHom(\uM, \O_Y^\times) \to \shHom (\shR,
    \shO_Y^\times) \to \shExt^1(\shM, \shO_Y^\times) \to 0 \,.    
  \end{equation}

  The \emph{sheaf of regular extensions}, denoted by $\shExtc^1 (\shM,
  \shO_Y^\times)$, is the image of $\shHom_c (\shR, \shO_Y^\times)$ in
  $\shExt^1(\shM, \shO_Y^\times)$.
\end{enumerate}
\end{dfn}


\subsection{A morphism $\varphi \colon \shExtc^1 (\shM,
  \shO_Y^\times) \to \shLS_Y$ in the affine local situation}
\label{sec:homom-varphi-colon}

\begin{setup}
  \label{setup:affine-local-situ-3}
    We continue with the \emph{affine local
    situation} of  Setup~\ref{setup:affine-local-situ}, and we assume
  in addition that
  \begin{enumerate}[(d)]
  \item For all $\tau \in \Sigma$, and all $m\in \tau$, $\shO(-D_\tau
    (m))$ is a trivial line bundle on $\overline{U}_\tau$.
  \end{enumerate}
\end{setup}

\begin{construction}
  \label{con:localphi}
  Let $U\subset Y$ be open and $h\in \Gamma(U,\shExtc^1 (\shM,
  \shO_Y^\times))$. Because of 
  assumption~(d) of Setup~\ref{setup:affine-local-situ-3}, $h$ lifts to
\[
  (h_\tau)_{\tau \in \Sigma}\in \shHom_c (\shR, \shO_Y^\times)(U) \,.
\]
Consider two maximal cones $\sigma_1, \sigma_2\in \Sigma$ meeting
along a common facet $\rho$.  We choose $e_2\in (\langle \rho \rangle +
\sigma_2)\cap M$ at
integral distance $1$ from $\rho$ and set\footnote{It is intentional that $\lambda(v_i)=\mu_{i-1}(-v_i)+\mu_i(v_i)$ from \eqref{abstr-lem-ai} has a different sign in the arguments when compared to 
$\varphi_\rho (h) = {\widetilde{h}_{\sigma_1}(e_2)}_{|Y_\rho} \otimes {\widetilde{h}_{\sigma_2} (-e_2)}_{|Y_\rho}$.}
\begin{equation}
\label{def-of-varphi}
\varphi_\rho (h) = {\widetilde{h}_{\sigma_1}(e_2)}_{|Y_\rho} \otimes {\widetilde{h}_{\sigma_2} (-e_2)}_{|Y_\rho}
\end{equation}
and the regularity of $h$ implies that $\varphi_\rho (h)$ is a
generator of the chart
\begin{equation}
\lambda_\rho (e_2)= {\shO_{U_1}\bigl(D_{\sigma_1}
(-e_2)\bigr)}_{|Y_\rho}
\otimes {\shO_{U_2}\bigl(D_{\sigma_2}
(e_2)\bigr)}_{|Y_\rho} 
\label{eq-rep-Lrho}
\end{equation}
of the line bundle $\shL_{\rho | U}$. 
\end{construction}

\begin{lemma-definition}
  \label{lem-dfn:localphi}
  Let $U\subset Y$ be open and
  $h\in \Gamma(U, \shExtc^1 (\shM, \shO_Y^\times))$.
  \begin{enumerate}[(1)]
  \item  The section $\varphi_\rho(h) \in \Gamma(Y_\rho \cap U,\shL_\rho)$ of
    Construction~\ref{con:localphi} does not depend on the choice of
    the lift $(h_\tau)_{\tau \in \Sigma}\in \shHom_c (\shR,
  \shO_Y^\times)(U)$, nor on the choice of $e_2$. Thus,
  Construction~\ref{con:localphi} provides for all $\rho \in T^{[1]}$
  a morphism of sheaves
  \[
    \varphi_\rho \colon \shExtc^1 (\shM, \shO_Y^\times) \to \shL_\rho \,.
  \]
\item Assembling the morphisms of Part~(1) gives a morphism
  $\oplus_{\rho \in T^{[1]}} \varphi_\rho
  \colon \shExtc^1 (\shM, \shO_Y^\times) \to \oplus_{\rho \in
    T^{[1]}}\shL_\rho $. The image of this morphism lies in
  $\shLS_Y^\times$.
  \end{enumerate}
The two Parts show that Construction~\ref{con:localphi} provides a
morphism $\varphi \colon \shExtc^1 (\shM, \shO_Y^\times) \to
\shLS_Y^\times$. 
\end{lemma-definition}

\begin{proof}[Beginning of the proof of Lemma-Definition~\ref{lem-dfn:localphi}]

We show that $\varphi (h)_\rho$ does not depend on the choice of lift
$(h_\tau)_{\tau \in \Sigma}\in \shHom_c (\shR, \shO_Y^\times)(U)$
and also that it does not depend on the choice of $e_2$.  Another
choice of lift differs from the chosen one by an element
\[
  u\in \shHom (\uM , \shO_Y^\times)(U) = \Hom_{\text{groups}}(M, \shO_Y(U)^\times)\,,
\]
so it can be written as $(uh_\tau)_{\tau \in \Sigma}$ and then
\[
  \varphi (uh)_\rho ={\bigl(u(e_2)
      \widetilde{h}_{\sigma_1}(e_2)\bigr)}_{|Y_\rho} \otimes
  {\bigl(u(e_2)^{-1}\widetilde{h}_{\sigma_2} (-e_2)\bigr)}_{|Y_\rho}
\] 
agrees with $\varphi (h)_\rho$.

A different choice $e'_2$ of $e_2$ differs by
$r=e_2'-e_2\in \langle\rho\rangle$ and leads to a different chart
$\lambda_\rho(e_2^\prime)$ of $\shL_\rho$ which is isomorphic to the
one in \eqref{eq-rep-Lrho} via the isomorphism $\psi_r$ from
Lemma~\ref{lem-no-choice}.  It is straightforward to see that the
assignment of the section $\varphi (h)_\rho$ to the tuple
$(h_\tau)_{\tau \in \Sigma}$ is compatible with $\psi_r$.

We have thus constructed a morphisms of sheaves of sets
$\oplus_{\rho} \varphi_\rho \colon \shExtc^1 (\shM, \shO_Y^\times) \to
\bigoplus_\rho \shL_\rho$.  We next explain why its image is contained
in $\shLS^\times_Y$, i.e., why its elements are nowhere vanishing and
satisfy the joint condition. That they are nowhere vanishing follows
immediately from the regularity property.

Before showing that the joint condition holds we need to discuss some
preliminaries.
\end{proof}

  \begin{dfn}
    \label{dfn:TX}
    Let $X$ be a space. The \emph{\mbox{$2$-group} $\uT (X)$ of trivialized line
      bundles} on $X$ is the strictly commutative \mbox{$2$-group}
    where: 
\begin{itemize}
\item Objects of $\uT (X)$ are trivialized line bundles on $X$, that
  is, pairs $(\shL, s)$ where $\shL$ is a
(trivial) line bundle on $X$, and $s\colon \shO_X\to \shL$ an
isomorphism;
\item A morphism from $(\shL_1, s_1)$ to $(\shL_2,s_2)$ is an
  isomorphisms $s\colon \shL_1\to \shL_2$ such that $s_2=s\circ s_1$.
\end{itemize}
The monoidal structure in $\uT (X)$ is given by tensor product
\[
(\shL_1,s_1)(\shL_2, s_2)=(\shL_1\otimes \shL_2, s_1\otimes s_2)
\]
and the distinguished neutral element is the trivial line bundle
$\shO_X$ with the unit trivializing section $1$. For $(\shL,s)$ we
have the inverse $(\shL^\star,(s^\star)^{-1})$ where
$\shL^\star=\shHom(\shL,\shO_X)$ denotes the dual and $s^\star$ the dual map.  For any pair
$(\shL_1, s_1),(\shL_2,s_2)\in\uT (X)$, there is a unique morphism
$(\shL_1, s_1)\to(\shL_2,s_2)$ in $\uT (X)$, so in particular, we have
unique morphisms
\[
(\shL_1\otimes \shL_2, s_1\otimes s_2)\to (\shL_2\otimes \shL_1,  s_2\otimes s_1),
\]
\[
(\shL, s)(\shL^\star, s^{-1})\to(\shO_X, 1)
  \]
  and $\uT (X)$ is thus strictly commutative.
  \end{dfn}

\begin{dfn}
  \label{dfn:local-line-bundle}
  Fix a codimension two cone
  $\omega\in \Sigma$ and denote by
  $\omega^{-1}\Sigma$ the localized fan.
  The \emph{local line bundle system} associated to a regular element
  $(h_\tau)_{\tau \in \Sigma}\in \shHom_c (\shR,
\shO_Y^\times)(U)$ is the continuous piecewise linear $2$-homomorphism (recall Definition~\ref{dfn:contPL2homo})
\[
  h^\omega\colon M \to \uT (Y_\omega \cap U)
\]
 such that if $\sigma \in \omega^{-1}\Sigma$ is a maximal cone and $m\in
 \sigma$, $h^\omega (m) =
 (\shO(-D_\sigma (m)),\widetilde{h}_\sigma )_{|Y_\omega\cap U}$.
 Regularity of $(h_\tau)_{\tau \in \Sigma}$ makes $h^\omega$
 well-defined.
\end{dfn}

\begin{proof}[End of the proof of Lemma-Definition~\ref{lem-dfn:localphi}]
  To see that $\varphi$ is well-defined as a morphism to $\shLS_Y$, we
  need to argue that elements in the image of $\varphi$ satisfy the
  joint condition.  We have defined the local line bundle system
  associated to a regular element
  $(h_\tau)_{\tau \in \Sigma}\in \shHom_c (\shR, \shO_Y^\times)(U)$ on
  an open $U\subset Y$. The abstract joint condition
  Lemma~\ref{lem:abstract_joint_conditon} applied to $h^\omega$ for
  every codimension two cell $\omega$ implies that the image of
  $\varphi$ is indeed contained in $\shLS_Y(U)$.
\end{proof}

\subsection{Globalizing the construction of $\varphi$}
\label{sec:glob-varphi-non}

\begin{lem}
  \label{lem:affine-cover}
  Let $Y$ be a \ggtc{} space. Then $Y$ has a cover by affines that
  satisfy properties (a)--(d) of Setup~\ref{setup:affine-local-situ}
  and Setup~\ref{setup:affine-local-situ-3}.
\end{lem}

\begin{proof}
  The proof is straightforward. To satisfy (d), we need to show that
  every $y\in Y$ has an affine neighbourhood $y\in U$ such that (d)
  holds on $U$. If $y\in Y_\eta^\star$, we may restrict to
  $Y=U_\eta$. We want a neighbourhood $y\in U$ such that for all $\tau
  \in \Sigma_\eta$ and all $m\in \tau \cap M_\eta$, the line bundles
  $\shO(-D_{\eta,\tau}(m))$ are trivial on $U$. This is easy to
  achieve based on the facts that: $T$ is finite, and all monoids
  $\tau \cap M_\eta$ are finitely generated.
\end{proof}

The goal of this section is to show that for all \ggtc{} spaces $Y$
the morphisms of sheaves defined in Lemma-Definition~\ref{lem-dfn:localphi}
in the local situation automatically glue to define a morphism of
sheaves $\varphi \colon \shExtc^1 (\shM, \shO_Y^\times)\to
\shLS_Y^\times$. The key result that makes everything work is the following:

\begin{lem}
  \label{lem:localizing} Let $Y$ be an affine \ggtc{} space satisfying
  conditions (a)--(d) of Setup Setup~\ref{setup:affine-local-situ}
  and Setup~\ref{setup:affine-local-situ-3} and $\eta\in T$ unique
  smallest stratum. Write $M=M_\eta$, $\Sigma = \Sigma_\eta$, etc.
  We identify the poset of strata with the poset of cones of $\Sigma$;
  note that, under this identification, $\eta$ corresponds to the cone
  $\{0\}\in \Sigma$. 

  Consider now an affine open $Y^\prime \subset Y$, also satisfying
  conditions (a)--(d). In particular, $Y^\prime$ has a smallest
  stratum $\eta^\prime \in \Sigma$ (and we allow the possibility
  $\eta^\prime =\{0\}$). We write
  \[
    M^\prime = M_{\eta^\prime}=M/\langle \eta^\prime\rangle, \; \pi
    \colon M \to M^\prime \;\text{the projection}, \quad
      \Sigma^\prime = \Sigma_{\eta^\prime}=\Sigma/\eta^\prime, \quad
      \text{etc.}
    \]
  Note that the cones of $\Sigma^\prime$ (a.k.a. the strata of
  $Y^\prime$) are the projections of the cones $\tau \in \Sigma$ such
  that $\eta^\prime \subset \tau$.

  In the notation of Definition~\ref{dfn:local_divisor_system}, we
  have:\footnote{In the display,
    $U_\tau$ is the star of $\tau$ in $Y$, and 
    $\overline{U}_\tau$ is the Zariski closure in
    $Y$.}
   \begin{description}
\item[(D) Sheaf property] 
For all $\eta^\prime \subset \tau$ and all $m\in\tau\cap M$,
\[
  D_{\eta,\tau}(m)_{|Y^\prime \cap \overline{U}_\tau} = D_{\eta^\prime,
    \pi(\tau)} (\pi(m)) \,.
\]
 \end{description}
\end{lem}

\begin{proof} As for the proof of Lemma~\ref{lem-Dtau}, the statement is a
  straightforward consequence of the definition. By construction of
  $D$, see Definition~\ref{dfn:viability}, we may assume that
  $Y=\Spec k(\eta) [\Sigma_y]$ with $D_\tau(m)=\div z^m$, where the
  result it is basically obvious.  
\end{proof}

\begin{lem}
  \label{mapvarphigeneral}
   For all \ggtc{} spaces $Y$
 the morphisms of sheaves defined in
 Lemma-Definition~\ref{lem-dfn:localphi} glue to define a morphism of
 sheaves
 \[
   \varphi \colon \shExtc^1 (\shM, \shO_Y^\times)\to
   \shLS_Y^\times \,.
 \]
\end{lem}

\begin{proof}
  By Lemma~\ref{lem:affine-cover}, it is enough to consider the
  situation $Y^\prime \subset Y$ of Lemma~\ref{lem:localizing}.

  We want to check that the respective definitions of $\varphi$
  resulting from using either $Y$ or $Y'$ agree on the smaller open
  set $Y^\prime$. This is, basically, an entirely straightforward exercise on
  unpacking the definitions, but we will spell it out in some detail. 

  The issue is that we are working with two relation sheaves,
  defined and related by the following commutative diagram of sheaves
  on $Y^\prime$ with exact rows and
  columns, where the notation is self-explanatory:
  \[
    \xymatrix{ &0 &0 & & \\
0\ar[r] &\shR^\prime\ar[r] \ar[u]&\uM^\prime\ar[r]\ar[u] &\shM_{|Y^\prime}\ar[r] & 0 \\
0\ar[r] &\shR_{|Y^\prime}\ar[r]\ar[u] &\uM_{|Y^\prime}\ar[r]\ar[u] & \shM_{|Y^\prime}\ar[r]\ar@{=}[u]& 0\\
&\underline{\langle\eta^\prime\rangle}\ar@{=}[r]\ar[u] &\underline{\langle\eta^\prime\rangle} \ar[u]& & \\
& 0\ar[u]&0\ar[u] & & }\]
 The two relation sheaves lead to two computations of the extension sheaf,
  summarised in the following commutative diagram of sheaves on
  $Y^\prime$ with exact rows and
  columns, where the notation is self-explanatory:
\[
  \xymatrix{0\ar[d] &0\ar[d] & & \\
    \shHom(\uM^\prime, \O_{Y^\prime}^\times )\ar[r] \ar[d]
      &\shHom(\shR^\prime, \O_{Y^\prime}^\times) \ar[r] \ar[d]^\chi &
      \shExt^1(\shM_{|Y^\prime}, \shO_{Y^\prime}^\times)
      \ar[r]\ar@{=}[d] & 0\\
   \shHom(\uM_{|Y^\prime}, \O_{Y^\prime}^\times) \ar[r] \ar[d]& \shHom (\shR_{|Y^\prime}, \shO_{Y^\prime}^\times)
   \ar[r] \ar[d]& \shExt^1(\shM_{|Y^\prime}, \shO_{Y^\prime}^\times) \ar[r]
   & 0\\
  \shHom (\underline{\langle\eta^\prime\rangle}, \shO^\times_{Y^\prime})\ar@{=}[r]& \shHom (\underline{\langle\eta^\prime\rangle}, \shO^\times_{Y^\prime}) &  &}
\]
 Let $\rho \in \Sigma^\prime$ be a submaximal cone at which maximal
 cones $\sigma_1^\prime, \sigma_2^\prime$ are incident. 
 Denote by $\varphi^\prime_\rho \colon \shExt^1_c(\shM_{|Y^\prime},
 \shO_{Y^\prime}^\times)\to \shL_\rho$ the morphism of sheaves obtained by
 construction~\ref{con:localphi} on the space $Y^\prime$, and by
 $\varphi_\rho  \colon \shExt^1_c(\shM_{|Y^\prime},
 \shO_{Y^\prime}^\times) \to \shL_\rho$ the morphism of sheaves obtained
 by Construction~\ref{con:localphi} on the space $Y$. We
 want to show that $\varphi^\prime = \varphi$.

 Consider a regular extension $h\in \shExt^1_c(\shM_{|Y^\prime},
 \shO_{Y^\prime}^\times)$. In order to compute $\varphi^\prime (h)$,
 we need to choose a lift in the first row
 \[
(h_{\tau^\prime})_{\tau^\prime\in \Sigma^\prime} \in \shHom_c(\shR^\prime, \O_{Y^\prime}^\times)
\]
 and follow Construction~\ref{con:localphi} on $Y^\prime$. Recall that we denote by
 $\pi\colon M \to M^\prime$ the natural projection: the cones
 $\tau^\prime\in \Sigma^\prime$ are the projections of the cones of
 $\Sigma$ that contain $\eta^\prime$. The result follows from
 the observation that
 \[
(h_{\tau^\prime}\circ \pi)_{\eta^\prime \subset \tau^\prime \in \Sigma} = \chi
\bigl( (h_{\tau^\prime})_{\tau^\prime\in \Sigma^\prime} \bigr)
   \]
   is a lift of $h$ in the second row; that because of
   Lemma~\ref{lem:localizing} it lies in
   $\shHom_c(\shR_{|Y^\prime}, \shO_{Y^\prime}^\times)$; and that
   therefore it is good to feed to construction~\ref{con:localphi} on
   $Y$. Unpacking the results leads to $\varphi
   (h)=\varphi^\prime (h)$.
\end{proof}

 \subsection{The morphism \protect$\varphi$ is injective}
 \label{sec:varphi-an-isom}

 \begin{lem}
   \label{lem:phi_injective}
   Let $Y$ be a \ggtc{} space. The morphism $\varphi$ of
   Lemma~\ref{mapvarphigeneral} is injective. 
 \end{lem}

 \begin{proof}
   In order to show the injectivity of $\varphi$, we may work on
   $Y$ affine satisfying properties (a)--(d) of
   Setup~\ref{setup:affine-local-situ} and
   Setup~\ref{setup:affine-local-situ-3}. 

   Assume that we are given two regular tuples
   $(h_\sigma)_{\sigma \in \Sigma}, (h'_\sigma)_{\sigma \in \Sigma}
   \in \shHom (\shR, \shO_Y^\times)$ that map to the same section of
   $\shLS^\times_Y$ under $\varphi$.

   For $\sigma\in\Sigma$ a maximal cone, the quotient
   $g=(\widetilde{h}_\sigma/\widetilde{h}'_\sigma)_{\sigma \in
     \Sigma}$ is a homomorphism
   $g_\sigma\colon M \to \shO (\overline U_\sigma)^\times$. To prove
   the injectivity of $Y$, we want to glue these maps to a
   homomorphism $g\colon M\to \shO (Y)^\times$, that is,
   $g \in \Gamma(Y,\shHom(\uM, \O_Y^\times))$.

For gluing along a slab $\rho$ with $\sigma_1,\sigma_2$ the maximal cones containing $\rho$, since $g_{\sigma_1},g_{\sigma_2}$ already agree on $\langle\rho\rangle$, we only need to consider $e_2\in\sigma_2$ at integral distance one from $\rho$ and we need to have $g_{\sigma_1}(e_2)_{|Y_\rho}=g_{\sigma_2}(e_2)_{|Y_\rho}$.
Assuming $\varphi(h)=\varphi(h')$ gives that $\varphi\big((h_\sigma)_{\sigma \in \Sigma}\big)_\rho$ and $\varphi\big((h'_\sigma)_{\sigma \in \Sigma}\big)_\rho$ define the same section of $\shL_\rho$.
In view of the definition \eqref{def-of-varphi}, we then have 
$$\widetilde{h}_{\sigma_1}(e_2)_{|Y_\rho}\otimes\widetilde{h}_{\sigma_2}(-e_2)_{|Y_\rho} = \widetilde{h}'_{\sigma_1}(e_2)_{|Y_\rho}\otimes\widetilde{h}'_{\sigma_2}(-e_2)_{|Y_\rho}.$$
and rearranging factors yields
\begin{equation}
\label{eq-g-glues}
g(e_2)_{|Y_\rho}=g(-e_2)^{-1}_{|Y_\rho}
\end{equation}
as desired.  We can use \eqref{eq-g-glues} to glue the homomorphisms
$g_{\sigma_1}\colon M\to \Gamma(\overline{U}_{\sigma_1}
\shO_Y^\times)$ and
$g_{\sigma_2} \colon M \to
\Gamma(\overline{U}_{\sigma_2},\shO_Y^\times)$ to a homomorphism $g
\colon M\to \Gamma\big( \overline{U}_{\sigma_1}\cup
\overline{U}_{\sigma_2} ,\shO_Y^\times\big)$.  Attaching similar
glueings along other slabs eventually yields a homomorphism
$g\colon M\to \Gamma(Y,\shO_Y^\times)$ with the property that on
each $\overline{U}_\tau$ it equals
$\widetilde{h}_\tau/\widetilde{h}'_\tau$. It follows from
Equation~\ref{eq:exact_ext} that the tuples
$(h_\sigma)$ and $(h'_\sigma)$ project to the same section in
$\shExtc^1 (\shM, \shO_Y^\times)$ and hence $\varphi$ is injective.
 \end{proof}

 \subsection{The morphism \protect$\varphi$ is surjective}
 \label{sec-surjective}

\begin{thm}
\label{thm-varphi-bijective}
  For every viable \ggtc{} space $Y$, the morphism
  \newline$\varphi\colon \shExtc^1 (\shM, \shO_Y^\times) \to
  \shL\shS^\times_Y $ is bijective.
\end{thm}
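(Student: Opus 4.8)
The plan is to deduce bijectivity of $\varphi$ from the two preceding subsections together with Proposition~\ref{PLTY}, by reducing to a stalk-local statement. Since being an isomorphism is local for a morphism of sheaves of sets, it suffices to check that $\varphi$ is bijective on every stalk; an isomorphism on all stalks is an isomorphism of sheaves, hence bijective on all sections. By Lemma~\ref{mapvarphigeneral} the map $\varphi$ is canonical and agrees with its affine local description, so for each point I may compute on an affine open star $U_y$ where $\varphi$ is given by the explicit formula \eqref{def-of-varphi}. Injectivity on stalks is already in hand: this is the content of \S\ref{sec:varphi-an-isom}, where two regular lifts with equal image under $\varphi$ were shown to differ by a linear map, hence to represent the same class in $\shExtc^1(\shM,\shO_Y^\times)$. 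So the only remaining task is surjectivity on stalks.

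For surjectivity I would begin with a germ $(f_\rho)_\rho$ of $\shLS_Y^\times$ at a point $p$ and shrink $Y$ to a small affine neighbourhood as at the start of \S\ref{sec-surjective}, so that every boundary divisor in $\Div_\flat U_x$ gives a trivial line bundle for each lift $x$ of $y$; this is legitimate because a germ is represented on arbitrarily small opens. Proposition~\ref{PLTY} then furnishes a $2$-homomorphism $M\to T(Y)$ whose restriction to each maximal cone $\sigma\cap M$ is a section upgrade $\widehat E_\sigma$ of $E_\sigma$. Next I would recover an element $h\in\shExtc^1(\shM,\shO_Y^\times)$ by forgetting all but the trivializing sections: for each maximal $\sigma$ and each $m$, the section recorded by $\widehat E_\sigma(m)$ is a generator $\widetilde h_\sigma(m)$ of $\shO_{\Ubar_\sigma}(-D_\sigma(m))\subset\shK_{\Ubar_\sigma}$, which is exactly the regularity condition of Definition~\ref{dfn:regularity}. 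Because all the $\widehat E_\sigma$ are restrictions of one $2$-homomorphism on $Y$, these trivializing sections assemble into a compatible system $(h_\tau)_{\tau\in\Sigma}$ in the sense of \eqref{eq1}, and thus define a regular class $h$.

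The final step, which I expect to be the only delicate point, is to verify $\varphi(h)=(f_\rho)_\rho$. Here Proposition~\ref{PLTY} tells us that across each wall $\rho=\sigma\cap\sigma'$ the two section upgrades are related by the relation of Lemma~\ref{lem-wall-transfer-bundle-2}, namely $\res_x\widehat E_\sigma+d_{\sigma\sigma'}\otimes(\shL_\rho,f_\rho)\cong\res_{x'}\widehat E_{\sigma'}$ in $N\otimes_\ZZ T(Y_\rho)$. Evaluating this relation on a vector $e_2\in\sigma'$ at integral distance one from $\rho$ isolates the trivialized wall bundle $(\shL_\rho,f_\rho)$ and identifies its section with $\widetilde h_\sigma(-e_2)_{|Y_\rho}\otimes\widetilde h_{\sigma'}(e_2)_{|Y_\rho}$, which is precisely $\varphi(h)_\rho$ by \eqref{def-of-varphi}. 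The hard part will be purely one of bookkeeping the \emph{trivializations} rather than the underlying bundles through this evaluation; but since any two objects of $T(Y_\rho)$ are joined by a unique isomorphism, matching the underlying bundles forces the sections to match as well, so $f_\rho=\varphi(h)_\rho$. This establishes surjectivity on stalks and, with the injectivity from \S\ref{sec:varphi-an-isom}, shows that $\varphi$ is bijective. I emphasize that the genuinely substantial work has already been carried out inside Proposition~\ref{PLTY}, through the shelling induction and the joint compatibility condition; what remains above is assembly and translation between the two descriptions.
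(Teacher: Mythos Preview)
Your proposal is correct and follows essentially the same route as the paper's proof: reduce to stalks, cite \S\ref{sec:varphi-an-isom} for injectivity, and for surjectivity extract from the $2$-homomorphism of Proposition~\ref{PLTY} a regular representative $(h_\tau)_\tau$ and check $\varphi(h)=(f_\rho)_\rho$ via the wall-crossing relation of Lemma~\ref{lem-wall-transfer-bundle-2}. Your verification that $\varphi(h)_\rho=f_\rho$ is actually more explicit than the paper's one-line ``by construction of $H$''; just make sure to phrase the last step as: the unique isomorphism in $T(Y_\rho)$ forgets (by the last clause of Lemma~\ref{lem-wall-transfer-bundle-2}) to the canonical bundle identification of Lemma~\ref{lem-wall-transfer-bundle}, hence carries $f_\rho$ to $\varphi(h)_\rho$ under that identification.
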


The statement is local hence we may work in the following:

\begin{setup}
  \label{setup:surjectivity} In the rest of this section we assume
  that $Y$ satisfies properties (a)--(d) of Setup~\ref{setup:affine-local-situ}
  and Setup~\ref{setup:affine-local-situ-3} and use freely the
  notation of Setup~\ref{setup:affine-local-situ}. 
\end{setup}

We begin with some preparations; the proof of the theorem is at the
end of the section.

\begin{notation}
  \label{nota:E}
  For all maximal cones $\sigma \in \Sigma$ we denote by
  \[
    E_\sigma\in N\otimes_\ZZ\PPic (Y_\sigma)=\underline{\twoHom}
    (M,\PPic Y_\sigma)
\]
the $2$-homomorphism such that for all $m\in M$, $E_\sigma
(m)=\shO_{Y_\sigma}(-D_\sigma(m))$. 
\end{notation}

 \begin{construction}
   \label{lem-slab-transfer-bundle}
   Consider two maximal cones $\sigma_1, \sigma_2\in \Sigma$ meeting
   along a slab $\rho$. Denote by $d_{\sigma_1\sigma_2}\in N$ the
   primitive normal to $\rho$ which evaluates non-negatively on
   $\sigma_2$.


   We will construct an isomorphism:
   \begin{equation}
     \label{eq:surj_one}
          \psi_{\sigma_1 \sigma_2} \colon \bigl[ E_{\sigma_1|Y_\rho} -
     d_{\sigma_1\sigma_2}\otimes 
     \shL_{\rho} \bigr] \overset{\cong}{\longrightarrow}  E_{\sigma_2|Y_\rho}
     \quad \text{in $N\otimes_\ZZ\PPic Y_\rho$}
   \end{equation}
 
   Note that this thing unpacks into a collection of isomorphisms, one for
   each $m\in M$:
   \[
     \psi_{\sigma_1 \sigma_2} (m) \colon E_{\sigma_1}(m)_{|Y_\rho} \otimes \shL_{\rho}^{-d_{\sigma_1\sigma_2}(m)}
     \overset{\cong}{\longrightarrow} E_{\sigma_2}(m)_{|Y_\rho}
   \]
   and such that $\psi_{\sigma_1 \sigma_2} (m)$ depends ``linearly'' on
   $m$.
   
For $m\in\langle\rho\rangle\cap M$ we have
$d_{\sigma_1\sigma_2}(m)=0$: in this case we define $\psi_{\sigma_1
  \sigma_2} (m)$ to be the obvious isomorphism obtained
by restricting to $Y_\rho$ the restrictable divisors $D_{\sigma_i}(m)$.

Now fix $e_2\in M$ with $d_{\sigma_1\sigma_2} (e_2)=1$: this
gives us the chart
\[
  \lambda(e_2) =
  {\shO_{Y_{\sigma_1}}\bigl(D_{\sigma_1}
  (-e_2)\bigr)}_{|Y_\rho} \otimes
{\shO_{Y_{\sigma_2}}\bigl(D_{\sigma_2} (e_2)\bigr)}_{|Y_\rho}
\]
for $\shL_\rho$ and we will carry out the construction by working in
this chart.

For $m=e_2$, we define:
\begin{multline*}
  \psi_{\sigma_1 \sigma_2} (e_2) \colon 
   \bigl({E_{\sigma_1}}_{|Y_\rho} - d_{\sigma_1\sigma_2}\otimes
    \lambda(e_2) \bigr)(e_2) = E_{\sigma_1}(e_e)_{|Y_\rho} \otimes \lambda(e_2)^{-1}=\\
  = \shO_{Y_{\sigma_1}}(-D_{\sigma_1}(e_2))_{|Y_\rho} \otimes
  \bigl({\shO_{Y_{\sigma_1}}\bigl(D_{\sigma_1} (-e_2)\bigr)}_{|Y_\rho}
    \otimes {\shO_{Y_{\sigma_2}}\bigl(D_{\sigma_2}
      (e_2)\bigr)}_{|Y_\rho}\bigr)^{-1}\\
   \overset{\cong}{\longrightarrow}
   \shO_{Y_{\sigma_2}}\bigl(-D_{\sigma_2}(e_2)\bigr) =
   E_{\sigma_2|Y_\rho}(e_2)  \,.
\end{multline*}

For general $m\in M$, there is a unique way to write
$m=m^\prime +k e_2$ with $m^\prime \in \langle \rho \rangle$ and $k\in
\ZZ$, and we define $\psi_{\sigma_1 \sigma_2} (m)=\psi_{\sigma_1 \sigma_2}
(m^\prime) \otimes \psi_{\sigma_1 \sigma_2} (e_2)^{\otimes k}$.

 We leave it to the reader to check that all the isomorphisms that one
 constructs similarly by working in different charts for $\shL_\rho$
 glue (see the proof of Corollary~\ref{cor:joint-compat} for a model
 discussion) and that the construction is linear in $m\in M$.
 \end{construction}
 
  



\begin{notation}
  \label{lem-slab-transfer-bundle-2}
  Assume Setup~\ref{setup:surjectivity} and Notation~\ref{nota:E}.

 Fix a nowhere vanishing section
 $f_\rho\in \Gamma(Y_\rho,\shL_\rho)$.

 Construction~\ref{lem-slab-transfer-bundle} provides an isomorphism
 \begin{equation}
   \label{eq:slab-transitions}
\psi_{\sigma_1 \sigma_2} (f_\rho) \colon E_{\sigma_1|Y_\rho}
\overset{\cong}{\longrightarrow} E_{\sigma_2|Y_\rho} \quad{in} \quad N\otimes
\PPic (Y_\rho)   
 \end{equation}
given, for all $m\in M$, as the composition 
\[
E_{\sigma_1}(m)_{|Y_\rho}\xlongrightarrow{(\ ) \times
  f_\rho^{-d_{\sigma_1\sigma_2}(m)}}
E_{\sigma_1}(m)_{|Y_\rho} \otimes \shL_\rho^{-d_{\sigma_1\sigma_2}(m)}
\xlongrightarrow{\psi_{\sigma_1\sigma_2}(m)} E_{\sigma_2}(m)_{|Y_\rho} \,.
\]
\end{notation}

\begin{dfn}
   \label{dfn:section_upgrade}
   Let $Y$ be a space, $M$ a lattice, $N=\Hom (M, \ZZ)$ and $E$ an
   object of the category $N\otimes_\ZZ \PPic (Y)$.

   A \emph{section upgrade} of $E$ is an object $\widehat{E}$ of
   $N\otimes_\ZZ \uT (Y)$ that maps to $E$ under the forgetful
   functor; in other words, $\widehat{E}=(E,e)$ where
   $e\colon N \otimes \shO_Y \to E$ is an isomorphism.
 \end{dfn}

\begin{pro}
  \label{PLTY}
  Let $Y$ be an affine viable \ggtc{} space as in
  Setup~\ref{setup:surjectivity}, $y\in Y$ a (closed) point, and let
  $E$ be as in Notation~\ref{nota:E}.

  For all $(f_\rho)_{\rho\in T^{[1]}}\in \Gamma(Y,\shLS^\times_{Y})$, possibly
  after shrinking $Y$ to a smaller affine neighbourhood of $y\in Y$,
  there exists $\widehat{H}\in N \otimes \uT(Y)$ such that:
  \begin{enumerate}[(1)]
  \item For every maximal cone $\sigma$,
    $\widehat{H}_{|Y_\sigma}=(E_\sigma,e_\sigma)$ is a section upgrade of
    $E_\sigma$; 
  \item For all pairs of maximal cones $\sigma_1,\sigma_2$ that meet in a slab
    $\rho$, denoting by \newline $\psi_{\sigma_1\sigma_2}(f_\rho) \colon
    E_{\sigma_1|Y_\rho} \overset{\cong}{\longrightarrow} E_{\sigma_2|Y_\rho}$
    the isomorphism of Notation~\ref{lem-slab-transfer-bundle-2}, we have
    \[
\psi_{\sigma_1\sigma_2}(f_\rho)(e_{\sigma_1|Y_\rho})=e_{\sigma_2|Y_\rho} \, .
      \]
  \end{enumerate}
\end{pro}

\begin{proof}
  The boundary complex of a polytope or polyhedral cone is shellable.
  Recall that this means that there is a shelling, that is an enumeration 
  \[
    \sigma_1,\sigma_2, \dots
  \]
  of its maximal cones such that for every $k\geq 1$ we have that
  $B_k=\left(\bigcup_{i=1}^k \sigma_i\right)\cap \sigma_{k+1} $ is a
  pure polyhedral complex of dimension $\dim M-1$ homeomorphic to
  either the cone over a ball or the cone over a sphere.
  Pick a shelling of $\Sigma$ and fix it for the rest of the proof.
  
  Choose a section upgrade $\widehat{H_1}= (E_{\sigma_1},
  e_{\sigma_1})$.  The cones $\sigma_1$ and $\sigma_2$ share exactly
   one submaximal face $\rho$ and Notation~\ref{lem-slab-transfer-bundle-2} implies that
   \[
     \bigl( E_{\sigma_2|Y_\rho},
     \psi_{\sigma_1\sigma_2}(f_\rho)(e_{\sigma_1|Y_\rho})\bigr)
   \]
   is a section upgrade of $E_{\sigma_2|Y_\rho}$. We extend it to a
   section upgrade $(E_{\sigma_2},e_{\sigma_2})$ of $E_{\sigma_2}$.
   (This extension might require us to shrink $Y$ to a smaller affine
   neighbourhood of $y$.) After this step, we have
   produced section upgrades
   $\widehat{E_{\sigma_1}}, \widehat{E_{\sigma_2}}$ of
   ${E}_{\sigma_1},{E}_{\sigma_2}$ that glue along $Y_\rho$ to give
   \[
     \widehat{H_2} \in N \otimes \uT(Y_{\sigma_1}\cup
     Y_{\sigma_2}) \,.
   \]

   Writing $Y_{k-1}=Y_{\sigma_1}\cup \cdots \cup Y_{\sigma_{k-1}}$, 
   assume by induction that we have constructed:
   \[
\widehat{H_{k-1}} \in N \otimes \uT(Y_{k-1})
\]
such that (1) and (2) hold for all cones $\sigma_i$ and for all pairs of cones
$\sigma_i, \sigma_j$ with $i, j\leq k-1$.

We will construct $\widehat{H_k}$ as follows. Below we write
$\widehat{H_{k-1}}=(H_{k-1},u_{k-1})$ where $H_{k-1} \in N \otimes
\PPic(Y_{k-1})$ and $\widehat{H_{k-1}}$ is a section upgrade. 
Let $i_1<\cdots <i_r\leq k-1$ be the
indices such that $\sigma_{i_m} \cap \sigma_k =\rho_m\in T^{[1]}$. We have that
\[
Y_{k-1} \cap Y_{\sigma_k} = Y_{\rho_1}\cup \dots \cup Y_{\rho_r} \,.
\]
The first step is to construct $H_k\in N \otimes \PPic (Y_k)$ by gluing
the $H_{k-1}$ to $E_{\sigma_k}$ by using the isomorphisms
  \[
    \psi_{\sigma_{i_m}\sigma_k}(f_{\rho_m}) \colon
    H_{k-1|Y_{\rho_m}}\overset{\cong}{\longrightarrow} E_{\sigma_k|Y_{\rho_m}} \,.
  \]

\paragraph{\textbf{Claim}} \emph{These isomorphisms agree on
the joints $Y_\omega$ where two of the $Y_\rho$ meet.}

\smallskip

To prove the claim, let us choose such a $\omega$ where two of the
$\rho$ meet and see what is going on on $Y_\omega$. Since
\[
  B_{k-1}=\rho_1\cup \dots \cup \rho_r\,
\]
shellability implies that exactly two $\rho$ meet at $\omega$, say
$\rho_a$ and $\rho_b$. We need to show that
\[
  \psi_{\sigma_{i_a}\sigma_k}(f_{\rho_a}) _{|Y_\omega} \colon
  H_{k-1|Y_\omega} \to E_{\sigma_k|Y_\omega}
  \quad \text{equals} \quad
  \psi_{\sigma_{i_b}\sigma_k}(f_{\rho_b})_{|Y_\omega} \colon
  H_{k-1|Y_\omega} \to E_{\sigma_k|Y_\omega}\,.
\]
Shellability implies that there are two increasing sequences of
indices:
\[
  a_1<\cdots < a_s \leq k-1
  \quad \text{and} \quad b_1<\cdots < b_t\leq k-1 \quad \text{where}
  \quad a_1=b_1, \; a_s =i_a,\; b_t=i_b
\]
such that
\[
\sigma_{a_1}, \sigma_{a_2},\dots, \sigma_{a_s},\sigma_k,
\sigma_{b_t},\dots, \sigma_{b_2}
\]
is a cyclic enumeration of all the maximal cones of $\Sigma$ incident
at $\omega$. In what follows we denote by $f_{a_i}$ the slab section on
$\sigma_{a_i}\cap \sigma_{a_{i+1}}$ and by $f_{b_i}$ the slab section on
$\sigma_{b_i}\cap \sigma_{b_{i+1}}$  
The claim follows from the identity:
\begin{equation}
  \label{eq:shelling-identity}
\psi_{\sigma_{a_s}\sigma_k}(f_{a_s}) \cdots \psi_{\sigma_{a_1}\sigma_{a_2}}(f_{a_1})
= \psi_{\sigma_{b_t}\sigma_k}(f_{b_t}) \cdots \psi_{\sigma_{b_1}\sigma_{b_2}}(f_{b_1})  
\end{equation}
on $Y_\omega$. Evaluating at $m\in M$ and using the description in
Notation~\ref{lem-slab-transfer-bundle-2},
identity~\ref{eq:shelling-identity} is the joint condition:
\[
f_{a_s}^{d_{\sigma_{a_s}\sigma_k}(m)} \cdots
f_{a_1}^{d_{\sigma_{a_1}\sigma_{a_2}}(m)}=
f_{b_t}^{d_{\sigma_{b_t}\sigma_k}(m)} \cdots
f_{b_1}^{d_{\sigma_{b_1}\sigma_{b_2}}(m)} \,.
\]

This proves the Claim, and the Claim readily implies the result. 
\end{proof}

\begin{proof}[Proof of Theorem~\ref{thm-varphi-bijective}]
  We have shown injectivity in \S~\ref{sec:varphi-an-isom}.  It
  suffices to show surjectivity of $\varphi$ at a stalk $y\in Y$. 
  Given
  $s=(f_\rho)_\rho\in \Gamma(Y,\shLS^\times_{Y})$, let
  $\widehat{H}\in N\otimes \uT(Y)$ denote the object given by
  Proposition~\ref{PLTY}. By the construction of $\widehat{H}$, for every cone
  $\tau\in\Sigma_y$, the restriction of $\widehat{H}\colon M\to \uT(Y)$ gives a
  group homomorphism
  \[
    h_{\tau}\colon \langle\tau\rangle\to  \shO (U_\tau)^\times
  \]  
Moreover, the regularity condition from
Definition~\ref{dfn:regularity} is satisfied by the construction of
$h$ because $m\in\langle\tau\rangle$ maps to a generator of
$\shO_{\overline{U}_{\tau}}(-D_\tau(m))$.  The collection of maps
$h=(h_{\tau})_{\tau\in \Sigma}$ is compatible in the sense that
$\rho_\tau^{\tau'}\circ h_{\tau'}=h_{\tau| {\langle\tau'\rangle}}$
whenever $\tau'\le\tau$ and so, by \eqref{eq1}, the collection $h$ of
$h_{\tau}$ constitutes a section of $\shHom_c(\shR,\shO_Y^\times)$ and
thus it gives a class in $\Gamma(Y,\shExtc^1 (\shM, \shO_Y^\times))$. By the
the key property of $\widehat{H}$ stated in Proposition~\ref{PLTY}(2), we have $\varphi(h)=s$.
\end{proof}

\subsection{Local sections of \protect$\shExt^1_c$ give log
  structures locally}
\label{sec:local-sect-prot}

\begin{pro}  
\label{pro:main_th_local_case}
Let $Y$ be an affine viable \ggtc{} space as in Setup~\ref{setup:surjectivity}.
Consider a regular extension of sheaves of groups
\[
  0\to \shO_Y^\times \to \foM \to \shM\to 0
\]
Then the fiber product $\foP:=\foM \times_{\shM} \shP$ comes equipped with
a monoid homomorphism $\alpha$ to $(\shO_Y,\times)$ that yields a
compatible log structure on $Y/k^\dagger$.
\end{pro}  
\begin{proof}
  Consider the relation sheaf sequence \eqref{relation-seq} (this is
  the same as the first line in \eqref{pushoutdiagram} below).
  By definition $\foM$ is in $\shExtc^1(\shM, \shO_Y^\times)$ if
  $\foM=\partial (h)$ for a regular $h\in \shHom_c (\shR, \shO_Y^\times)$,
  where $\partial$ denotes the boundary map
  \[
    \partial\colon\shHom(\shR,\shO_Y^\times)\to \shExt^1 (\shM,
    \shO_Y^\times) \,.
  \]
  In concrete terms, given $h\colon \shR\to \shO_Y^\times$, the
  extension $\partial h$ is constructed by push out: 
\begin{equation}
\label{pushoutdiagram} 
\begin{aligned}
\xymatrix{
0\ar[r]& \shR \ar[r]\ar_(.45)h[d]& \uM \ar[r]\ar[d]& \shM \ar[r]\ar@{=}[d]& 0\\
0 \ar[r]& \shO_Y^\times \ar[r] & \foM \ar[r]& \shM \ar[r]& 0
}\end{aligned}
\end{equation}
with exact rows and cocartesian squares, where
\begin{equation}
\label{def-log-str}
\foM = \shO_Y^\times\oplus_\shR \uM = (\shO_Y^\times\oplus \uM) /
\{(h(m),-m)\,|\,m\in\shR \} \, .
\end{equation}

Our $\foM$ arises in this way from a regular $h\in \shHom_c (\shR,
\shO_Y^\times)$. From the concrete description of
Lemma~\ref{lem:concrete_description}, $h=(h_\tau)_{\tau \in \Sigma}$ where
$h_\tau\colon \langle \tau \rangle \to \shO (U_\tau)^\times$ is a
group homomorphism and, for every $\tau^\prime \leq \tau$,
$\rho^{\tau^\prime}_\tau \circ h_{\tau^\prime} = h_\tau$.
Definition~\ref{dfn:regularity} states that $h$ is regular if, denoting by
\[
\iota_\tau \colon \shO_Y^\times (U_\tau) \hookrightarrow \shK (\overline{U}_\tau)
\]
the natural inclusion, we have that for all $\tau\in \Sigma$ and all $m\in
\langle \tau \rangle$
\[
  \widetilde{h}_\tau (m) = \iota_\tau \circ h_\tau (m) \quad \text{is a generator
    of}
  \quad \shO_{\overline{U}_\tau}\left(-D_\tau(m)\right) \,.
\]

We are now ready to define the log structure
$\alpha\colon \foP := \foM \times_{\shM} \shP \to\shO_Y$. For all
$\tau \in \Sigma$, we describe
\[
\alpha_\tau \colon \foP(U_\tau)=\foM (U_\tau)\times_{M_\tau} P_\tau
\to \shO(U_\tau) .
  \]

For all $\tau\in \Sigma$, denote by $\pi_\tau \colon M \to M_\tau =
M/\langle \tau \rangle$ and by $\psi_\tau \colon P_\tau \to M_\tau = P_\tau
/\mathbf{1}_\tau$ the natural projections. 
An element of $\foP(U_\tau)$ is a pair $((\xi, m),p)$ where $\xi \in
\shO (U_\tau)^\times$, $m\in M$, $p\in P_\tau$, and $\pi_\tau
(m)=\psi_\tau (p)$.

If $\psi_\tau (p) = 0$, that is, $p\in \mathbf{1}_\tau + P_\tau$, then we
set $\alpha_\tau \bigl( (\xi, m),p \bigr)=0$.

Otherwise, there is a smallest cone $\tau \leq \tau^\prime$ such that
$m\in\tau^\prime$ and $\pi_\tau (m)=\psi_\tau (p)$. Note that, in this
case, $\overline{U}_{\tau^\prime}\cap U_\tau \subset U_\tau$ is a union of
irreducible components. In this case, we set:
\[
  \alpha_\tau \bigl( (\xi, m),p \bigr) =
  \begin{cases}
    \widetilde{h}_{\tau^\prime}(m) \xi_{|\overline{U_{\tau^\prime}}
      \cap U_\tau}\; & \text{on $
      \overline{U}_{\tau^\prime}\cap U_\tau $;}\\ 
    0 \; & \text{on $U_\tau \setminus \overline{U}_{\tau^\prime}$ \,.}
  \end{cases}
  \]
  The key observation here is that, when $m\in \tau^\prime$, the divisor
  $D_{\tau^\prime} (m) \in \Div^+\overline{U}_{\tau^\prime}$ is
  effective; and hence the rational function
  $\widetilde{h}_{\tau^\prime}(m) \xi_{|\overline{U}_{\tau^\prime}\cap U_\tau}$ is
    in fact regular and it vanishes on the boundary, and hence it can
    be extended by zero to a regular function on all of $\overline U_\tau$.

    It is straightforward to check that $\alpha$ is well-defined; that
    it is a log structure; and that the global section
    $\bigl((1,0),\textbf{1}_\shP\bigr)$ defines a morphism of log structures
    $Y^\dagger \to (\Spec k)^\dagger$.
    \end{proof}
  
\subsection{Proof of
  Theorem~\ref{mainmaintheorem}}\label{sec:glob-log-struct} 

We want to generalize the constructions in the previous section from the affine situation to the general situation.
We begin by rephrasing Proposition 3.11 in \cite{MR2213573} as the following Lemma.
\begin{lem}
Let $X$ be a reduced Deligne--Mumford stack with two log structures\newline $\alpha,\alpha'\colon \foP \to \shO_X$ with identical monoid sheaf. We denote $\beta=\alpha'_{|(\alpha')^{-1}(\shO_X^\times)}$, so $\beta$ is an isomorphism onto $\shO_X^\times$.
If $\alpha\circ\beta^{-1}\colon \shO_X^\times\to \shO_X^\times$ is the identity then $\alpha=\alpha'$.
\label{lem-unique}
\end{lem}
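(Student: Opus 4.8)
The plan is to reduce the equality $\alpha=\alpha'$ of two monoid homomorphisms $\foP\to(\shO_X,\times)$ to a comparison at the generic points of the irreducible components of $X$, where reducedness forces every non-unit to equal $0$. The first preliminary step is to observe that the subsheaf of units is \emph{intrinsic} to $\foP$ and does not depend on the chosen log structure. Indeed, for any log structure $\alpha$, the restriction $\alpha|_{\alpha^{-1}(\shO_X^\times)}$ is an isomorphism onto the group $\shO_X^\times$, so $\alpha^{-1}(\shO_X^\times)$ is a group and hence is contained in $\foP^\times$; conversely any invertible section of $\foP$ has invertible image under $\alpha$. Thus $\alpha^{-1}(\shO_X^\times)=\foP^\times=\alpha'^{-1}(\shO_X^\times)$, so both log structures live on the same sheaf and share the same unit subsheaf $\foP^\times$; the hypothesis (the condition on $\shO_X^\times$) records precisely that $\alpha$ and $\alpha'$ restrict to the \emph{same} isomorphism $\foP^\times\xrightarrow{\sim}\shO_X^\times$.

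Next I would reduce to the case of a reduced scheme. Equality of the two maps of sheaves may be checked after pullback along an \'etale atlas $X'\to X$ by a reduced scheme, since \'etale morphisms preserve reducedness and pull back the structure sheaf and the log data faithfully. So assume $X$ is a reduced scheme, and fix a local section $p\in\foP$; it suffices to prove $\alpha(p)=\alpha'(p)$ in $\shO_X$.

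The crux is the comparison at generic points. Let $\xi$ be the generic point of an irreducible component of $X$; since $X$ is reduced, the local ring $\shO_{X,\xi}$ is a field, whose only non-unit is $0$. For the stalk $p\in\foP_\xi$ there are two cases. If $p\in\foP_\xi^\times$, then $\alpha(p)_\xi=\alpha'(p)_\xi$ because $\alpha$ and $\alpha'$ agree on units. If $p\notin\foP_\xi^\times$, then $\alpha(p)_\xi$ and $\alpha'(p)_\xi$ are non-units of the field $\shO_{X,\xi}$, hence both equal $0$. Either way $\alpha(p)_\xi=\alpha'(p)_\xi$ at every generic point. Consequently the section $\alpha(p)-\alpha'(p)\in\shO_X$ maps to $0$ under the natural inclusion $\shO_X\hookrightarrow\shK_X$ into the total ring of fractions, which is injective precisely because $X$ is reduced. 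Therefore $\alpha(p)=\alpha'(p)$, and since $p$ was arbitrary, $\alpha=\alpha'$.

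The main (and only) essential input is reducedness, which is used twice: once to make $\shO_{X,\xi}$ a field, so that non-units of $\foP$ map to $0$ there, and once to make $\shO_X\hookrightarrow\shK_X$ injective so that agreement at all generic points propagates to all of $X$. No finiteness, saturatedness, or coherence of $\foP$ is needed; the only point requiring a word of care is the passage from the stack to a reduced scheme atlas, which is routine \'etale descent.
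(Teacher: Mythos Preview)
Your proof is correct and follows essentially the same route as the paper's: compare $\alpha$ and $\alpha'$ at the generic points of components, where reducedness makes the local ring a field so that non-units of $\foP$ are forced to map to $0$, and then use that $\shO_X$ injects into the product of its generic stalks (equivalently, into $\shK_X$). Your write-up is more explicit in two respects---you spell out that $\alpha^{-1}(\shO_X^\times)=\foP^\times$ is intrinsic to $\foP$, and you insert the routine \'etale-atlas reduction from stacks to schemes---but these are elaborations of the same argument rather than a different approach.
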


\begin{proof} 
  Consider $\alpha,\alpha'$ at the generic point $\eta$ of a component
  of $X$,
  $\alpha_\eta,\alpha'_\eta\colon \foP_\eta \to \shO_{X,\eta}$.  Since
  $\shO_{X,\eta}$ is a field, given $m\in \foP_\eta$ either
  $\alpha_\eta(m)=0$ or $\alpha_\eta(m)$ is invertible, similarly for
  $\alpha'_\eta(m)$. It follows from the assumptions that
  $\alpha_\eta=\alpha'_\eta$. Since $X$ is reduced, for any open set
  $U$, $\shO_X(U)$ injects into $\bigoplus_{\eta\in U} \shO_{X,\eta}$
  where $\eta$ runs over the generic points of the components of $X$.
  The homomorphism $\alpha$ is thus determined by the homomorphisms $\alpha_\eta$ and
  thus $\alpha=\alpha'$.
\end{proof}

\begin{proof}[Proof of Theorem~\ref{mainmaintheorem}] 
  Let $Y$ be a viable \ggtc{} space.


  \smallskip

\textbf{Step~1} \emph{We construct a morphism of Zariski sheaves of
  sets on $Y$:
   \begin{equation} 
\label{psi}
    \psi\colon \LS \to \shExt^1_c(\shM, \shO_Y^\times) \,.
  \end{equation}}

\smallskip

We construct a set-theoretic function
$\LS(Y) \to \Gamma\bigl(Y,\shExtc^1(\shM, \shO_Y^\times)\bigr)$ that assembles to a sheaf
morphism in the input $Y$. Let $\alpha\colon \foP\to\shO_Y$ be a
compatible log structure. Using $\psi\colon \foP \to \shP$ from
Part~(1) of Definition~\ref{dfn:compatible}, we can write
$\foP=\foM \times_{\shM} \shP$ where $\foM=\foP/\one_{\foP}$.  All
relevant maps being bijective, we identify the projection of
$\alpha^{-1}(\shO_Y^\times)$ in $\foM$ with $\shO_Y^\times$ and obtain
an exact sequence as shown as the bottom row in
\eqref{pushoutdiagram}.  We need to show that its extension class lies
in $\shExtc^1 (\shM, \shO_Y^\times)$.  This is a local question, so we
may assume that $[\foM]=\partial(h)$ for some
$h\in\Gamma(Y,\shHom(\shR,\shO_Y^\times))$ so we have a diagram
like~\eqref{pushoutdiagram}. We may also assume that $Y$ is affine as in
Setup~\ref{setup:surjectivity}.
We want to show that $h=(h_\tau)_{\tau \in \Sigma}$ is regular. We
need to ``extract'' $h_\tau(m)$ from the datum of the log structure. Fix
$m\in M$ and let $\tau \in \Sigma$ be the smallest cone that contains
it. There is a unique
$\widehat{m}\in P_\tau\setminus (\mathbf{1}_\tau+P_\tau)$ that maps to
$m$ under the projection $P_\tau \to M_\tau=P_\tau /\mathbf{1}_\tau$,
and it is tautologically the case that, interpreting the pair
$((1,m),\widehat{m})$ as an element of
$\foP(U_\tau)=\foM(U_\tau)\times_{M_\tau}P_\tau$, 
\[
  h_\tau (m)=\alpha(\widehat{m}) 
  \in \shO (U_\tau) \,.
\]
By Part~(b) of Definition~\ref{dfn:compatible}, $\div
(h_\tau(m))=D_\tau(m)$. This holds for all $m\in M$, hence $h$ is
regular.

\smallskip

\textbf{Step~2} \emph{The morphism $\psi$ is injective.}

\smallskip

We need to show that if two
compatible log structures give rise to the same extensions,
then they are isomorphic as compatible log structures.
If the extensions are the same then the two log structures must have
isomorphic monoid sheaves compatibly with the respective embeddings of
$\shO_Y^\times$ and the result follows from Lemma~\ref{lem-unique}. 

\smallskip

\textbf{Step~3} \emph{The morphism $\psi$ is surjective.}

\smallskip


Start from a class in
$\Gamma(Y,\shExtc^1 (\shM, \shO_Y^\times))\subset \Gamma(Y,\shExt^1
(\shM, \shO_Y^\times))$. Since $\shM$ is supported on the union of
slabs, which is of codimension one, we have that
$\shHom(\shM,\shO_Y^\times)=0$. The local-to-global Ext spectral
sequence then implies that
$\Gamma(Y,\shExt^1 (\shM, \shO_Y^\times))=\Ext^1(\shM,
\shO_Y^\times)$, so we obtain a sheaf $\foM$ that sits in the middle
of an exact sequence as in Proposition~\ref{pro:main_th_local_case}.
We then produce the monoid sheaf $\foP := \foM \times_{\shM} \shP$
which locally comes with a homomorphism to $(\shO_Y,\cdot)$ by
Proposition~\ref{pro:main_th_local_case}. These local maps glue to a
global one by the uniqueness of these maps according to
Lemma~\ref{lem-unique}.

\smallskip

\textbf{Step~4} \emph{End of proof of Theorem~\ref{mainmaintheorem}.}

\smallskip

We define the composition
\begin{equation}
    \label{morphism-r}
r=\varphi\circ\psi\colon \LS \to \shLS_Y^\times 
\end{equation}
of $\psi$ from \eqref{psi} with the bijection $\varphi$ from Theorem~\ref{thm-varphi-bijective}.
\end{proof}
  
  
\section{Examples}
\label{sec:examples}

\subsection{Two components}
\label{sec:two-components}

In this section, $Y$ consists of two smooth components $Y_1, Y_2$
meeting along a smooth irreducible divisor $D\subset Y_{i}$ ($i=1,2$)
all defined over $k$. First, we describe a sheaf of monoids
$\shP$ on $Y$ that allows us to equip $Y$ with the structure of a \ggtc{}
space; then, we study the log structures $\foP$ on $Y$ that have ghost
sheaf $\shP$, and finally we see how to keep track of a morphism to
the standard log point $\Spec k ^\dagger$. This is the most elementary
example of the theory but it is nevertheless quite rich and it is well
worth it to invest the time necessary to understand it
completely.\\[1mm]
\noindent
\begin{minipage}[c]{0.65\textwidth}
  The simplest source of examples of log structures on $Y$ are
  embeddings $i\colon Y \hookrightarrow X$ into a smooth scheme
  $X$. More generally, we can look at an embedding where locally
  analytically (or \'etale locally) $Y=(t=0) \subset X$ where
  $X=(xy+t^r=0)\subset \AA^3_{x,y,t}\times \AA^m$ for $m=\dim D$.

Given such an embedding one can form the \emph{divisorial log structure}
$\foP_{X,Y}=\shO_X\cap \shO_{X\setminus Y}^\times$ on $X$, and the log
structure $\foP_Y=i^\star \foP_{X,Y}$ on $Y$. Below we make log structures
on $Y$ of this \'etale local type without reference to an embedding. 
 Fix an integer $r>0$, consider the sublattice
\end{minipage}
\hfill
\begin{minipage}[t]{0.3\textwidth}
\captionsetup{width=\textwidth}
\vspace{-22mm}
\qquad\ \includegraphics[width=.65\textwidth]{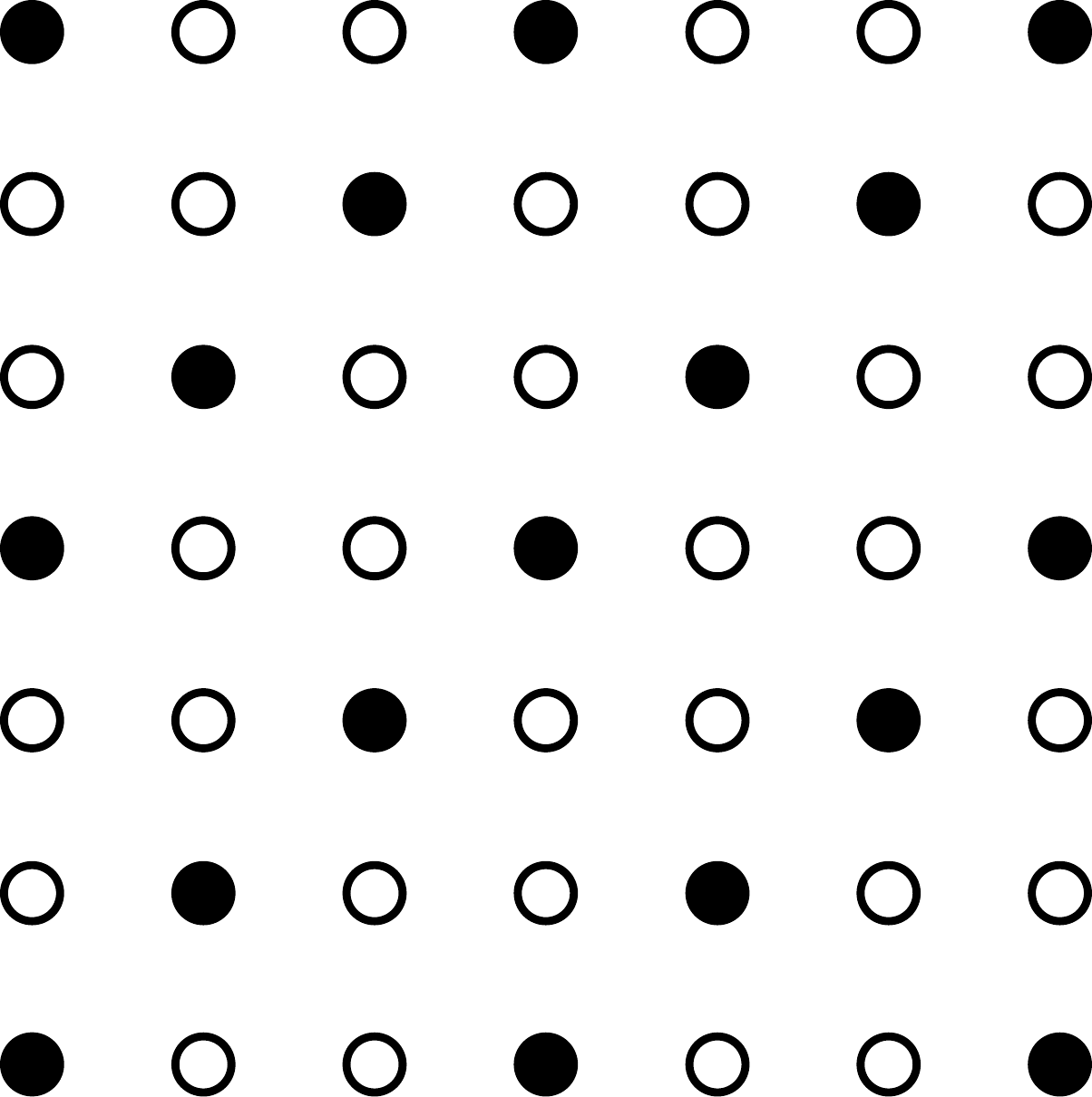}
\captionof{figure}{The sublattice $M\subset\ZZ^2$ for $r=3$.}
\label{fig-lattice}
\end{minipage}
\[
  M=\{(n_1,n_2)\mid n_1 \equiv n_2 \Mod{r}\}\subset \ZZ^2,
\]
illustrated in Figure~\ref{fig-lattice},
and let $P=\langle e_1,e_2\rangle_+\cap M$ be the monoid of lattice points in the positive
quadrant. This monoid is generated by $(r,0)$, $(0,r)$ and $(1,1)$ with the obvious relation.
We use $P$ to define a sheaf of monoids on $Y$ as
follows. 
For a connected Zariski open subset $U\subset Y$, we set
\[
  \shP (U) =
  \begin{cases}
    P \; & \text{if $U\cap D\neq \emptyset$,}\\
    P/\langle re_2 \rangle\cong \NN \; & \text{if $U\subset Y\setminus Y_2$,} \\
    P/\langle re_1 \rangle \cong \NN \; & \text{if $U\subset Y\setminus Y_1$.} \\ 
  \end{cases}
\]

Now $Y$ is a \ggtc{} space with ghost sheaf $\shP$ in a
natural way: if locally at the generic point $\eta \in D$ we have $Y=(xy=0)$ where
say $Y_1=(y=0)$ and $Y_2=(x=0)$, then we have
$k[P]=k[x,y,t]/(xy+t^r)$. In what follows, it is crucial to understand
that $x\in k[P]$ vanishes  with multiplicity $r$ along $Y_2$.

\begin{pro}
  \label{pro:log_2components}
  Let $r>0$ and $Y=Y_1+Y_2$ a toroidal crossing space as just
  described. To give a log structure $\foP$ on $Y$ is equivalent to
  giving line bundles $\shL_1, \shL_2,\shL$ on $Y$, homomorphisms
  $\alpha_1\colon \shL_1\to \shO_Y$, $\alpha_2\colon \shL_2\to \shO_Y$
  such that
\begin{align}
  \label{eq:example1}
 \alpha_{1|Y_2}\colon
\shL_{1|Y_2}\overset{\cong}{\longrightarrow}\shO_{Y_2}(-D)\subset
  \shO_{Y_2} & \quad \text{and} \quad \alpha_{1|Y_1} = 0, \\
   \alpha_{2|Y_1}\colon
\shL_{2|Y_1}\overset{\cong}{\longrightarrow}\shO_{Y_1}(-D)\subset
  \shO_{Y_1} & \quad \text{and} \quad \alpha_{2|Y_2} = 0;
\end{align}
and an identification
$s\colon \shL^{\otimes r} \overset{\cong}{\longrightarrow}
\shL_1\otimes \shL_2$.
\end{pro}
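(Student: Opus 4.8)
The plan is to prove the proposition directly and by hand, following the Deligne--Faltings description of a log structure as a symmetric monoidal functor from the ghost sheaf into the category of pairs $(\shL,\alpha)$ consisting of a line bundle $\shL$ together with a morphism $\alpha\colon \shL\to \shO_Y$ (compare \cite{MR2964607} and the discussion in \S\ref{sec:stat-main-result}). The starting point is to read off a global presentation of $\shP$: the monoid $P$ is minimally generated by $\one=(1,1)$, $\delta_1=(r,0)$ and $\delta_2=(0,r)$, subject to the single relation $\delta_1+\delta_2=r\,\one$; these three sections are defined globally on $Y$ (with $\delta_1$ generizing to $0$ on the component where $z^{\delta_1}$ vanishes identically, and symmetrically for $\delta_2$), so that at the generic point of $D$ the stalk is all of $P$ while on each open component the ghost is the quotient $\NN$. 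Hence a log structure with ghost $\shP$ is the same as the assignment of pairs $(\shL_1,\alpha_1)$, $(\shL_2,\alpha_2)$, $(\shL,\alpha_\one)$ to the generators $\delta_1,\delta_2,\one$, together with an isomorphism $s\colon \shL^{\otimes r}\cong \shL_1\otimes\shL_2$ encoding the relation and satisfying the compatibility $\alpha_\one^{\otimes r}=(\alpha_1\otimes\alpha_2)\circ s$.

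For the forward direction I would take a log structure $\foP$ with ghost $\shP$ and, for each of the three generators, form the $\shO_Y^\times$-torsor given by its preimage in $\foP$; this produces the line bundles $\shL_1,\shL_2,\shL$ with their structure maps, and the relation produces $s$ as above. The key computation is that $\alpha_\one=0$ is forced: restricting $\alpha_\one^{\otimes r}=(\alpha_1\otimes\alpha_2)\circ s$ to the generic point of either component $Y_i$ and using that $\alpha_{3-i}$ vanishes identically there (because the monomial $z^{\delta_{3-i}}$ does) shows that $\alpha_\one^{\otimes r}$ vanishes at both generic points, hence $\alpha_\one=0$ since $Y$ is reduced. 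Thus the datum $\alpha_\one$ carries no information and drops out, leaving exactly $(\shL_1,\shL_2,\shL,\alpha_1,\alpha_2,s)$. The divisor conditions on $\alpha_1,\alpha_2$ then come from gtc-compatibility together with the fact that the divisor system is an isomorphism (the Lemma in \S\ref{sec:cone_sheaf}): on the component of $Y$ on which $z^{\delta_i}$ does not vanish, $\div(\alpha_i)$ must equal the monomial divisor $\div(z^{\delta_i})=D$, giving the isomorphism $\alpha_i\colon \shL_i\cong \shO(-D)$, while on the other component $\alpha_i$ vanishes identically; this is precisely \eqref{eq:example1}.

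For the reverse direction I would start from the data $(\shL_1,\shL_2,\shL,\alpha_1,\alpha_2,s)$, set $\alpha_\one:=0$, and note that the compatibility $\alpha_\one^{\otimes r}=(\alpha_1\otimes\alpha_2)\circ s$ then holds automatically, both sides vanishing (the left-hand side because $\alpha_1$ and $\alpha_2$ vanish on opposite components). This defines a symmetric monoidal functor on the generators and relation of $\shP$, hence a log structure $\foP$; concretely one may realize it as $\foP=\foM\times_{\shM}\shP$ exactly as in the proof of Proposition~\ref{pro:main_th_local_case}, with $\alpha$ the induced monoid homomorphism to $(\shO_Y,\cdot)$. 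One then checks that $\alpha^{-1}(\shO_Y^\times)\to\shO_Y^\times$ is an isomorphism --- equivalently that no nonzero section of $\shP$ lifts to a unit, which holds because each of $z^{\delta_1},z^{\delta_2},z^{\one}$ vanishes along $D$ --- so that $\foP$ is a genuine log structure with ghost $\shP$. Finally I would check that the two assignments are mutually inverse, which is immediate once $\alpha_\one$ has been eliminated.

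The main obstacle I expect is bookkeeping rather than conceptual: precisely matching the requirement that $\foP$ have ghost exactly $\shP$ and be compatible with the gtc structure with the stated conditions on $\alpha_1,\alpha_2$, in particular pinning the multiplicity to $1$ (so that one obtains $\shO(-D)$ rather than $\shO(-kD)$) through the divisor-system isomorphism and the monomial divisor compatibility \eqref{mon-div-comp}, and verifying the global consistency of the generator--relation presentation across the three strata. The integer $r$ enters solely through the relation $\delta_1+\delta_2=r\,\one$, which exhibits $(\shL,s)$ as the datum of an $r$-th tensor root of $\shL_1\otimes\shL_2$; recording this root is exactly the freedom that remains after $\alpha_\one$ is seen to vanish.
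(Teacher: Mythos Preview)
Your approach is essentially the same as the paper's: both use the Deligne--Faltings/Borne--Vistoli description of a log structure as a monoidal functor $p\mapsto(\shL^p,\alpha_p)$ from $\shP$ into line bundles with section. The paper's proof is explicitly labeled a \emph{sketch} and only treats the reverse direction, writing $p=n_1\delta_1+q\,\one$ and setting $\shL^p=\shL_1^{\otimes n_1}\otimes\shL^{\otimes q}$ to obtain a strict $2$-homomorphism $P\to\PPic Y$, then $\foP(U)=\coprod_p\shL^p(U)^\times$; your generators-and-relation packaging is equivalent, and you supply the forward direction as well.

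One small correction to your bookkeeping: with the paper's conventions ($Y_1=(y=0)$, ghost $P/\langle re_2\rangle$ on $Y\setminus Y_2$), it is $\alpha_i$ that vanishes identically on $Y_i$, not $\alpha_{3-i}$; equivalently $z^{\delta_i}$ vanishes on $Y_i$. Your index is swapped in the sentence deriving $\alpha_{\one}=0$ (and ``left-hand side'' should read ``right-hand side'' in the reverse direction). This does not affect the argument, since either way $\alpha_1\otimes\alpha_2$ vanishes on both components and hence $\alpha_{\one}^{\otimes r}=0$; but you may as well note the more direct route: at each generic point $\eta_i$ the ghost stalk is $\NN$ with $\one\mapsto 1\neq 0$, so $\alpha_{\one}(\eta_i)$ is a nonunit in a field, hence zero.
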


\begin{proof}[Sketch of proof] We construct a log structure from the
  data spelled out in the proposition.
  First of all, for $p=n_1(r,0)+q(1,1)\in P$ --- that is,
  $q\geq 0,n_1\in\ZZ$ satisfying $n_1+\frac{q}{r} \geq 0$ --- write
  \[
    \shL^p = \shL_1^{\otimes n_1} \otimes
    \shL^{\otimes q}.
  \]
  Note how this choice singles out a \mbox{$2$-homomorphism} from the monoid
  $P$ --- viewed as a category --- to the \mbox{$2$-group} $\PPic Y$.
  The log structure is the sheaf of monoids:
  \[
    \foP (U) =
    \begin{cases}
      \coprod_{p\in P} \shL^p (U)^\times \; & \text{if $U\cap D\neq
        \emptyset$,}\\
      \coprod_{q\in \NN} \shL^{\otimes q} (U )^\times\; & \text{if $U\subset Y\setminus
        Y_2$,} \\
     \coprod_{q\in \NN} \shL^{\otimes q} (U )^\times \; & \text{if $U\subset Y\setminus
        Y_1$.}
    \end{cases}
\]
 together with the obvious homomorphism $\alpha \colon \foP \to
 \shO_Y$. 
\end{proof}

We want to understand isomorphism classes of log structures on $Y$
over $\Spec k^\dagger$, where we map $\NN\to P$ by taking $1$ to
$\mathbf{1}=(1,1)$: it is surprising to see that this set has a
simpler description:

\begin{pro}
  \label{pro:2components_over_logpoint}
  Let $Y=Y_1+Y_2$ be a toroidal crossing space as above. The set of
  isomorphism classes of log structures on $Y$ over $\Spec k^\dagger$
  is the set of nowhere-vanishing sections of the sheaf
  \[
\shL_D=\left(N_{Y_1} D\right)\otimes \left(N_{Y_2}D\right)
  \]
\end{pro}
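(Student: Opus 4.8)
The plan is to read the answer off Proposition~\ref{pro:log_2components}, exploiting the single extra datum carried by a log structure \emph{over} $k^\dagger$, namely the section $\one=\one_{\foP}$. First I would recall that the structure morphism $\NN\to P$ sends $1$ to $\mathbf 1=(1,1)$, and that under the equivalence of Proposition~\ref{pro:log_2components} the monoid element $(1,1)\in P$ corresponds to the line bundle $\shL^{(1,1)}=\shL$. Hence giving $\one$ over $\one_\shP=(1,1)$ is the same as giving a nowhere vanishing section of $\shL$, and such a section trivialises $\shL\cong\shO_Y$. Feeding this into the identification $s\colon\shL^{\otimes r}\overset{\cong}{\to}\shL_1\otimes\shL_2$ then trivialises $\shL_1\otimes\shL_2$ as well; in other words, passing to $k^\dagger$ kills all the moduli in $(\shL,s)$ and forces $\shL_2\cong\shL_1^\vee$.

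It then remains to see what freedom is left in $(\shL_1,\alpha_1,\alpha_2)$. From $\alpha_1$ we get $\shL_{1|Y_2}\cong\shO_{Y_2}(-D)$, while restricting the now trivial isomorphism $\shL^{\otimes r}\cong\shL_1\otimes\shL_2$ to $Y_1$ and using $\alpha_{2|Y_1}\colon\shL_{2|Y_1}\cong\shO_{Y_1}(-D)$ yields $\shL_{1|Y_1}\cong\shO_{Y_1}(D)$. Thus both restrictions of $\shL_1$ to the two components are pinned down canonically, and since $Y$ is the push-out of $Y^{[1]}\rightrightarrows Y^{[0]}$ (condition~(5) in \S~\ref{sec:gener-toro-cross}), i.e.\ is glued from $Y_1$ and $Y_2$ along $D$, the bundle $\shL_1$ is determined by a gluing isomorphism of these restrictions over $D$. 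Writing $\shO_{Y_i}(D)|_D=N_{Y_i}D$, such a gluing is an isomorphism $N_{Y_1}D\cong (N_{Y_2}D)^\vee$, equivalently --- after inverting --- a nowhere vanishing section of $\shL_D=N_{Y_1}D\otimes N_{Y_2}D$. Read backwards, any nowhere vanishing section of $\shL_D$ reconstitutes $\shL_1$, hence $\shL_2=\shL_1^\vee$, and thereby a log structure over $k^\dagger$ via Proposition~\ref{pro:log_2components}, so both directions of the asserted bijection are in hand.

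The one point that genuinely needs argument --- and what I expect to be the main obstacle --- is that this assignment is a bijection \emph{on isomorphism classes}, i.e.\ that the gluing section is a complete invariant and that distinct sections give non-isomorphic log structures. This is a rigidity statement: an automorphism of the data fixing $\one$ is a pair of units $u_1,u_2\in\Gamma(Y,\shO_Y^\times)$ with $\alpha_i\circ u_i=\alpha_i$ and $u_1u_2=1$ (the last relation because $\shL$, and hence the trivialised $\shL_1\otimes\shL_2$, is fixed). Injectivity of $\alpha_{1|Y_2}$ and $\alpha_{2|Y_1}$ forces $u_1|_{Y_2}=1$ and $u_2|_{Y_1}=1$, whereupon $u_1u_2=1$ yields $u_1|_{Y_1}=u_2|_{Y_2}=1$ as well; since $Y$ is reduced and $\shO_Y\hookrightarrow\shO_{Y_1}\oplus\shO_{Y_2}$, we conclude $u_1=u_2=1$. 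With no nontrivial automorphisms, the extracted gluing section is well-defined on isomorphism classes and recovers the log structure, and the correspondence is the desired bijection --- consistent with Theorem~\ref{mainmaintheorem}, since here $T^{[2]}=\emptyset$ and the unique wall bundle is $\shL_\rho=\shL_D=N_{Y_1}D\otimes N_{Y_2}D$.
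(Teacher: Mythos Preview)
Your argument is correct and follows essentially the same route as the paper's sketch: trivialise $\shL$ via the section $\one$, deduce the restrictions of $\shL_1$ to $Y_1$ and $Y_2$, and read off the gluing datum along $D$ as a nowhere-vanishing section of $N_{Y_1}D\otimes N_{Y_2}D$. Your write-up is in fact more complete than the paper's, which is labelled ``Sketch of proof'': you supply the rigidity argument showing there are no nontrivial automorphisms fixing $\one$, which the paper omits, and you are explicit about invoking the push-out description of $Y$ to justify the gluing step.
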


\begin{proof}[Sketch of proof] Suppose given a log structure $\foP$ on
  $Y$, making it into a log scheme $Y^\dagger$. In the notation of the
  previous proposition, a morphism $Y^\dagger \to \Spec k^\dagger$ is
  precisely the datum of a nowhere-vanishing global section $\sigma_0 \in H^0(Y,\shL)$. This
  also gives a section $\sigma =\sigma_0^r \in H^0(Y, \shL^{\otimes r})$ that we pass through the isomorphism $s$ to 
  trivialize the line bundle $\shL_1\otimes \shL_2$. So for example we have
\begin{align*}
  {\shL_1}_{|Y_2}&=\shO_{Y_2}(-D)\\
  {\shL_1}_{|Y_1}&\stackrel{s(\sigma)}{=}{\shL_2^\star}_{|Y_1}=\shO_{Y_1}(D)
\end{align*}
So $\shL_1$ is glued together from an isomorphism
\[
f\colon {\shO_{Y_2}(-D)}_{|D}=N_{Y_2}^\star D \overset{\cong}{\longrightarrow} N_{Y_1} D ={\shO_{Y_1}(D)}_{|D}
 \]
 or, equivalently, a nowhere-vanishing section of
 \[
\shLS_Y = (N_{Y_1} D) \otimes (N_{Y_2} D).
 \]
\end{proof}
In particular in order for the log structure over $\Spec k^\dagger$
to even exist, one must have that $\shLS_Y$ is a trivial line bundle.

\begin{rem}
  \begin{enumerate}[(1)]
  \item Perhaps surprisingly, the sheaf $\shLS_Y=\shL_D$ in the
    example, and thus the fact of the existence of a compatible log structure
    over the standard log point, does not depend on $r$. 
  \item It is a matter of convention whether
    $\shLS_Y=N_{Y_1}D\otimes N_{Y_2}D$ or its dual. Our convention is
    motivated by the fact that we want $\shLS_Y$ to have lots of
    sections when $N_{Y_1}D\otimes N_{Y_2}D$ has
    lots of sections. 
    These sections of $\shLS_Y$ are useful even when
    they vanish somewhere and we think of them as giving ``singular''
    log structures, as justified in Definition~\ref{dfn-gs} below. Furthermore, sections with vanishing loci in $\shLS_Y$ may arise from restrictions of divisorial log structures of a pair $(X,Y)$ to the divisor $Y$.
The convention also fits with the existence of an isomorphism $\shLS_Y\cong \shT^1_Y$ in the normal crossing case, \cite{MR4304077}. 
\label{signmatters}
\end{enumerate}
\end{rem}

\begin{dfn} 
\label{dfn-gs}
We adopt the following language from the Gross--Siebert program initiated in \cite{MR2213573}.
\begin{enumerate}
\item We say that a section of $\shLS_Y$ whose vanishing locus is a
  divisor $Z\subset D$ is a log structure on $Y$ \emph{singular along}
  $Z$.  A priori, the log structure is only defined on $Y\setminus
  Z$. A log structure on all of $Y$ can be produced by taking the
  direct image of the log structure on $Y\setminus Z$.  This direct
  image log structure fails to be coherent along $Z$.  Failure of
  coherence prevents the log scheme from satisfying the formal log
  smoothness lifting criterion as we explain in the simplest example
  below, so calling the log structure \emph{singular} along $Z$ is
  justified.  With regards to part \ref{signmatters} of the previous
  remark we view a log structure from a section of $\shLS_Y$ on an
  open set that extends with \emph{poles} in its complement as
  pathological.
\item In the above example, the sheaf $\shLS_Y$ was a line bundle on
  $D$, identical with the unique slab bundle.  More generally, as in
  Definition~\ref{def_shLS}, $\shLS_Y$ is defined as a subsheaf of the
  direct sum of slab bundles cut out by the joint condition.  We call
  a section $f_\rho$ of a slab bundle $\shL_\rho$ a \emph{slab
    section}, typically studied in a local trivialization of the
  bundle where we may also call it a \emph{slab function}.
\item Each slab bundle comes with a positive integer like the integer $r$ above which appears in its frame from the relation $xy=z^r$. We refer to this integer as the \emph{kink} of the slab.
\end{enumerate}
\end{dfn} 

\subsection{The easiest singular log structure}
\label{sec:easiest-singular-log}

It pays off to study the easiest singular log structure that there is
and understand it completely.

Consider $\AA^3$ with coordinates $x,y,u$; we take $X$ to be the surface
\[
  X= \bigl(xy=0\bigr) \subset \AA^3.
\]
Note that $X$ consists of two components
\[
X_1=\AA^2_{x,u}= \left(y=0\right),\quad  X_2=\AA^2_{y,u}= \left(x=0\right)
\]
Let us write $S=X_1\cap X_2=\AA^1_u$. The slab bundle for the slab $S$ is trivial. We endow $X$ with the log
structure over $k^\dagger$ given by the slab function
\[
f_\rho (u) =u
\]
on $S$. 
Denoting the origin by $\mathbf{0}$, 
we set $Z=\{\mathbf{0}\}\subset S$, write $U=X\setminus Z$, $S^\star=S\setminus Z$ and denote by $j\colon U \to X$
the natural inclusion. 
Since $f_\rho$ vanishes outside $U$, the log structure only exists
on $U$ and we denote its sheaf of monoids by $\foP_U$. 
To obtain a log structure on $X$ we use $\foM_X=j_\star \foP_U$.
and arrive at a log morphism $f\colon X^\dagger \to \Spec k^\dagger$
which is not formally log smooth at $Z$. 
This can be seen as follows. One first verifies that the stalk $\alpha_\mathbf{0}\colon \foM_{X,\mathbf{0}}\to \shO_{X,\mathbf{0}}$ agrees with the stalk of log structure obtained as pull-back from $\Spec k^\dagger$.
That is, the log morphism $f$ is \emph{strict} at $\mathbf{0}$.
Since the underlying morphism is not formally smooth, there is a test diagram with a square zero extension as in the definition of formal smoothness that has no diagonal map. 
Enrich this diagram with pull-back log structures from $\Spec k^\dagger$ and it also will not have a diagonal map, so formal log smoothness fails at $\mathbf{0}$.
On the other hand, formal log smoothness works over $U$ which we leave as an exercise.

As explained in \S~\ref{sec:two-components}, the log structure on
$U$ is determined by line bundles
\[
  \shL_{1\, U}, \quad \shL_{2\,U}, \quad \shL_U
\]
on $U$, and homomorphisms $\alpha_i\colon \shL_{i\,U}\to \shO$
satisfying the conditions in Proposition~\ref{pro:log_2components}, and $s\colon
\shL_U \overset{\cong}{\longrightarrow} \shL_{1\,U} \otimes \shL_{2\,U} $. Following
the recipe in the proof of Proposition~\ref{pro:2components_over_logpoint},
the line bundle $\shL_{2 U}$, for example, is obtained by assembling the line bundles
$\shO_{U_1}$ on $U_1$ and $\shO_{U_2}$ on $U_2$ via the isomorphism 
\[
\text{multiplication by $u$}: \shO_{U_1|S^\star} =\O_{S^\star}
\overset{\cong}{\longrightarrow} \O_{S^\star} =  \shO_{U_2|S^\star}.
\]
It follows from this that $\shL_2=j_\star \shL_{2\, U}$ is the
reflexive sheaf on $X$ associated to the $k[x,y,u]/_{(xy)}$-module
\[
M_1=\Gamma\left(\shL_{1\, U} ,U\right) =\langle e_1=(1,u),\, e_2=(0,y)\mid ye_1=ue_2, xe_2\rangle.
\]
Note for example that $(x,0)=xe_1$, $(1,u+y)=e_1+e_2$,
et cetera. Although $\shL_2$ is not a line bundle on $X$,
the restrictions $\shL_{2|X_1}$ and $\shL_{2|X_2}$ are line bundles.

The homomorphism $\alpha
\colon M \to k[x,y]/_{(xy)}$ is defined as
\[
\alpha (1,u)=x,\quad \alpha (0,y) = 0,
\]
that is
\[
  \alpha \colon \shL_{2|X_1}\overset{\cong}{\longrightarrow}\shO_{X_1}(-S),\quad
  \alpha \colon \shL_{2|X_2} =0 
\]

Naturally $\shL_1=\shL_2^\vee$ and $\shL =\shO_X=\shL_1[\otimes] \shL_2$ where $[\otimes]$ refers to the reflexive tensor product.
\footnote{It is an interesting (and surprisingly nontrivial) exercise to verify that the reflexive sheaf of relative log differentials
$\W^1_{X^\dagger/k^\dagger}:= j_\star\Omega^1_{U^\dagger/k^\dagger}$ is isomorphic to $\shL_1\oplus\shL_2$. The associated $k[x,y]/_{(x,y)}$-module is
\[
 \langle D_x, D_y,
 D_x^u, D_y^u \mid xD_y+yD_x=0, xD_x^u=uD_x,yD_y^u=uD_y \rangle
\] 
(Where, morally, $D_x=dx$, $D_y=dy$, $D_x^u = u \frac{dx}{x}$, $D_y^u=u\frac{dy}{y}$. Note for example
that $D_x^u+D_y^u=du$.)}

Take $Y_1=X_1$, let $Y_2\to X_2$ be the blow up of $X_2$ at the
origin, assemble $Y_1$ and $Y_2$ to obtain $Y$ and denote by $f\colon
Y\to X$ the proper birational map; the exceptional set is $E\cong
\PP^1\subset Y$ and $f_{|Y\setminus E}\colon Y\setminus E \to X\setminus Z$ is an isomorphism. Then
\[
\shLS_Y=f^\star \left(\shLS_X(-Z)\right).
\]
There is a unique log structure on $Y$ over $k^\dagger$ that agrees on
\[
  Y\setminus E \overset{f}{=} X\setminus Z
\]
with the given one and is log smooth over $k^\dagger$ because the
section $f_\rho=u\in \Gamma(S^\star,\shLS_Y)$ extends as a nowhere
vanishing section of $\shLS_Y$ on $S$.

\subsection{A reducible quartic del Pezzo surface}
\label{sec:reduc-quart-del}

\subsubsection{Description of the surface and log structure} 
\label{delPezzo}
We start from $X$ the toroidal crossing surface given as the union of three toric
surfaces as in the moment polygonal complex pictured in
Figure~\ref{fig:surface}.
\begin{figure}[h]
  \centering
  \begin{tikzpicture}
      \foreach \x in {-1, ..., 1}
      \foreach \y in {-1, ..., 1}
      {
        \node[thickstyle] (\x - \y) at (\x, \y){};
      }
      \draw[thick] (-1, -1) -- (1, -1) -- (0, 1) -- cycle;
      \draw[thick] (-1,-1) -- (0,0);
      \draw[thick] (0,0) -- (0,1);
      \draw[thick] (0,0) -- (1,-1);
      \node [anchor=east] at (0.5, 0.2) {$u$};
      \node [anchor=east] at (-0.75, -1.25) {$x$};
      \node [anchor=east] at (0.25, -1.25) {$w$};
      \node [anchor=east] at (1.25, -1.25) {$y$};
      \node [anchor=east] at (0.25, 1.25) {$z$};
  \end{tikzpicture}
  \caption{The moment polyhedral complex of the surface $X$}
  \label{fig:surface}
\end{figure}
From the picture we see that $X$ is naturally embedded in $\PP^4$ and given by equations:
\begin{equation*}
  X= \left\{
      \begin{aligned}
        xy-w^2&=0 \\
        zw & = 0
       \end{aligned}
      \right.
\end{equation*}
    in homogeneous coordinates $x:y:z:u:w$. The three components of
    $X$ are:
    \[
      X_1=\left(z=xy-w^2 = 0\right); \quad X_2=\left(w=x=0\right); \quad \text{and $X_3=\left(w=y=0\right)$} 
   \]
   The obvious --- and, essentially, unique --- polarization (see page \pageref{polar}) has kink $\kappa=1$ along the $x$ and
   $y$ axes and $\kappa=2$ along the $z$-axis.\footnote{In fact, this is the
   simplest real life example where we see a necessity for allowing a kink $>1$.} The datum of
   a log structure on $X$ over $k^\dagger$ consists of slab sections:
       \begin{align*}
       f_{\rho_x}& = a_0+a_1x+a_2x^2 \\
       f_{\rho_y}&= b_0+b_1y+b_2y^2 \\
       f_{\rho_z}&=c_0 + c_1z+c_2z^2+c_3z^3+c_4z^4
     \end{align*}
     (in the affine patch $u=1$) and the joint condition
     at the origin states that 
     \begin{equation}
     a_0=b_0,\quad \hbox{and}\quad c_0=a_0^2.
     \label{joint-cond-del-Pezzo}
     \end{equation} From now on we fix general slab
     sections and denote by $X^\dagger/k^\dagger$ the surface $X$
     equipped with the log structure and structure morphism $X^\dagger \to \Spec
     k^\dagger$ given by these functions.

     The morphism is not log smooth: it is singular along the union of points
     $Z\subset X$ where the slab secctions vanish. For generic coefficients, we find $8$ such points. 
     We will next see how these log structures arise naturally when we
     deform the surface $X$.

     \subsubsection{Log deformations of $X^\dagger/k^\dagger$}
     Consider the family of surfaces over $\AA^1$ with coordinate
     $t$ given by the equations
  \begin{equation}
    \label{eq:defo_dP4}
    \foX =
    \left\{
    \begin{aligned}
    xy-w^2& =ts_1wu+t^2(c_2u^2+c_3uz+c_4z^2)\\
    zw& = t(s_0u^2+a_1xu+a_2x^2+b_1yu+b_2y^2)
  \end{aligned}
  \right.
  \end{equation}
  in $\PP^4 \times \AA^{1}$.
  Note the following:
  \begin{enumerate}[(1)]
    \item Together with the projection $\pi \colon \foX \to \AA^1$
    the equations represent a flat deformation --- in fact, a smoothing ---
    of the surface $X=\foX_0$. 
   The general fibre is a del Pezzo surface of
    degree~$4$: a smooth intersection of two quadrics in $\PP^4$.
   \item Denote by $i \colon X \hookrightarrow \foX$ the natural
     inclusion. The total space $\foX$ is endowed with a natural
     (singular!)~divisorial log structure $\foM_{\foX, X}$; let us denote by
     $\foX^\dagger$ the corresponding log scheme. 
     Similarly, let $(\AA^1)^{\dagger}$ be $\AA^1$ together with the divisorial log structure from $t=0$.
     There is an obvious
     morphism of log schemes $\pi^\dagger \colon \foX^\dagger \to
     (\AA^1)^{\dagger}$ which when pulled back to $X$ gives a log
     structure $X^\dagger/k^\dagger$. The conflict of notation
     will be resolved momentarily when we see that this log structure
     is the same one that
     we defined above in \S \ref{delPezzo}.  The slab sections for this log
     structure on $X$ can be easily read from the equations; they are:
     \begin{align*}
       f_{\rho_x}& = s_0+a_1x+a_2x^2 \\
       f_{\rho_y}&= s_0+b_1y+b_2y^2 \\
       f_{\rho_z}&= s_0^2+s_0s_1z+c_2z^2+c_3z^3+c_4z^4
     \end{align*}
     For example we compute $f_{\rho_z}$ in the affine open set $u=1$ by localizing at $z=0$, using the second equation to
     solve for $w$,
     \[
w =\frac{ts_0}{z}\mod t(x,y)
\]
and plugging into the first equation which gives
\[
(xz)(yz)=t^2\bigl(s_0^2+s_0s_1z+c_2z^2+c_3z^3+c_4z^4\bigr) \mod t^2(x,y).
\]
   From this expression, we also read off from the exponent of $t$ that gives the kink $k=2$.\footnote{Staring at
   the formula for the slab section $f_{\rho_z}$ together with
   Equation~\ref{eq:defo_dP4} we see the slightly curious fact
   that ``half'' of $f_{\rho_z}$ contributes to the first-order
   deformation of $X$, and half to the second-order deformation!}
 \end{enumerate}

 \subsubsection{Log crepant log resolutions}

The following facts are elementary and easy to verify. We construct a
surface $Y$ and proper birational morphism $f\colon Y \to X$ as
follows. First let $Y_2\to X_2$ be the blow up of the two points
$(f_{\rho_y}=0)$ on the $y$-axis. Next let $Y_3\to X_3$ be the blow up of:
\begin{itemize}
\item the two points $(f_{\rho_x}=0)$ on the $x$-axis, and
\item the four points $(f_{\rho_z}=0)$ on the $z$-axis.
\end{itemize}
Denote by $Y$ be the surface obtained by re-assembling the three components
$Y_1=X_1$, $Y_2$, $Y_3$ in the obvious way, sketched in Figure~\ref{fig:resolved-surface}; and by $f\colon Y \to X$
the obvious proper birational morphism.
\begin{figure}[h]
  \centering
  \begin{tikzpicture}[scale=1.2]
      \foreach \x in {-1, ..., 1}
      \foreach \y in {-1, ..., 1}
      {
        \node[thickstyle] (\x - \y) at (\x, \y){};
      }
      \draw[thick] (-1, -1) -- (1, -1) -- (0, 1) -- cycle;
      \draw[thick] (-1,-1) -- (0,0);
      \draw[thick] (0,0) -- (0,1);
      \draw[thick] (0,0) -- (1,-1);
      \node [anchor=east] at (0.42, 0.2) {$u$};
      \node [anchor=east] at (-0.75, -1.25) {$x$};
      \node [anchor=east] at (0.25, -1.25) {$w$};
      \node [anchor=east] at (1.25, -1.25) {$y$};
      \node [anchor=east] at (0.25, 1.25) {$z$};
      \foreach \yi in {0.2,0.35,0.5,0.65}{
        \draw[thick] (0,\yi) arc [start angle=90, delta angle=70, radius=0.15];
      }
      \foreach \t in {0.2,0.5}{
        \draw[thick] (-\t,-\t) arc [start angle=45, delta angle=70, radius=0.15];
      }
      \foreach \t in {0.25,0.55}{
        \draw[thick] (\t,-\t) arc [start angle=-45, delta angle=70, radius=0.15];
      }
  \end{tikzpicture}
  \caption{A picture of the resolved surface $Y$ with exceptional
    curves indicated as little arcs in the triangles that correspond
    to the components of $Y$ that contain them.}
  \label{fig:resolved-surface}
\end{figure}

Our construction of $\shLS$ gives a way to compare $\shLS_Y$ with
$\shLS_X$. Denote by $S=\Sing X$ and $T=\Sing Y$ the singular sets
with the reduced scheme structure; and note that $f_{|T}\colon T\to S$
is an isomorphism. We use $f_{|T}$ to view $Z$ as a subset of $T$. The
resolution was constructed precisely so that the intersection of the
union of all exceptional curves with $T$ equals $Z$. Moreover, we
precisely understand how the slab bundles change under this blow-up.
A concise way to write this is as follows.  The sheaf $\shLS_Y$ is
supported on $T$ and $\shLS_X$ on $S$ and we have
\begin{equation}
\shLS_Y = f^\star \shLS_X  (-Z).
\label{blow-up-LS}
\end{equation}
The slab sections $f_{\rho_x}$, $f_{\rho_y}$ and $f_{\rho_z}$ give a section $s$ of $\shLS_X$ in the affine neighbourhood $u=1$ of the unique joint and this section extends uniquely to all of $X$ without receiving extra zeros outside the affine chart. The vanishing set of $s$ is $Z$. Therefore the section $s$
is the image of a section $\widetilde{s}$ under the natural inclusion $\shLS_X(-Z)\subset \shLS_X$ and moreover $\widetilde{s}$ is nowhere vanishing.
Hence, $\widetilde{s}$ gives a compatible log structure on $Y$ that is smooth over $k^\dagger$, in notation $Y^\dagger/k^\dagger$.
Furthermore, since $\widetilde{s}$ and $s$ agree over the set
\[
  Y\setminus f^{-1} (Z)\overset{f}{=} X\setminus Z.
\]
and $\widetilde{s}$ maps to $s$ under the inclusion
$\shLS_X(-Z)\subset \shLS_X$, we also obtain that the log structure
given by $\widetilde{s}$ on $Y\setminus X$ is the one given
by $s$ on $X\setminus Z$. The morphism of schemes $f\colon Y \to X$
is \emph{log crepant} in the sense that
\[
  \Omega^2_{Y^\dagger/k^\dagger}
\]
is $f$-trivial. We say that $Y^\dagger/k^\dagger$ together with $f\colon Y \to
X$ is a log crepant log resolution of $X^\dagger$ over $k^\dagger$.

\subsection{The \mbox{$3$-fold} transverse $A_1$-singularity}
\label{sec:mbox3-fold-transv}
\ \vspace{-.2cm}
\subsubsection{Description of the space and its log structure}
We take $X$ to be the affine cone over the projective toroidal crossing surface of \S~\ref{delPezzo}: the toroidal crossing \mbox{$3$-fold} union of three
toric affine pieces given in $\AA^5$ by the equations
\begin{equation*}
 X= \left\{
      \begin{aligned}
        xy-w^2&=0 \\
        zw & = 0
       \end{aligned}
      \right.
\end{equation*}
    in affine coordinates $x,y,z,u,w$. Note that $u$ does not appear
    in the equations: $X$ is the product of a toroidal crossing
    surface with $\AA^1_u$.

    The three components of
    $X$ are
    \[
      X_1=\left(z=xy-w^2 = 0\right), \quad X_2=\left(w=x=0\right) \quad \text{and}\quad X_3=\left(w=y=0\right). 
   \]
   The obvious --- and, essentially, unique --- polarization has kink
   $k=1$ along the coordinate surfaces $\AA^2_{x,u}$ and $\AA^2_{y,u}$, and $k=2$
   along $\AA^2_{z,u}$.

   We choose the very special log structure on $X$ over $k^\dagger$
   given by the slab sections:
\begin{equation}
  \label{log-str-3-fold} 
  \begin{aligned}
    f_{\rho_x} &= u \\
    f_{\rho_y} &= u \\
    f_{\rho_z} &= u^2 - z^2
  \end{aligned}
\end{equation}
The joint condition along the $u$-axis is satisfied. We
denote by $X^\dagger/k^\dagger$ the \mbox{$3$-fold} $X$ equipped with
the log structure and structure morphism $X^\dagger \to \Spec
k^\dagger$ given by these slab sections.

The singular locus of $X$ has three irreducible components as follows:\\[-.5cm]

\noindent
\begin{minipage}[c]{0.65\textwidth}
\begin{align*}
  S_1=&\AA^2_{z,u}=X_2\cap X_3,\\ 
   S_2=&\AA^2_{x,u}=X_1\cap X_3,\\ 
   S_3=&\AA^2_{y,u}=X_1\cap X_2
\end{align*}

The structure morphism $\pi \colon X^\dagger \to \Spec k^\dagger$ is
singular along the locus $Z\subset X$ where the slab sections
vanish. This singular locus has three irreducible
components as follows:
\[
  Z_1 =(u^2-z^2=0)\subset S_1, \quad \]
\[  Z_2 =(u=0) \subset S_2, \quad \]
\[  Z_3 =(u=0) \subset S_3, \quad
 \]
\end{minipage}
\hfill
\begin{minipage}[c]{0.3\textwidth}
  \captionsetup{width=\textwidth}
\qquad\ \includegraphics[width=.65\textwidth]{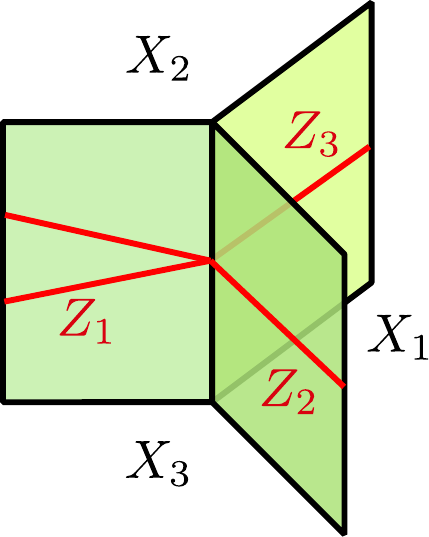}
\captionof{figure}{The log singular locus of $X^\dagger$.}
\label{fig-lattice2}
\end{minipage}

\smallskip

\subsubsection{Log deformations of $X^\dagger/k^\dagger$.}
Consider the family of \mbox{$3$-folds} over $\AA^1$ with
coordinate $t$ given by the equations
  \begin{equation}
    \label{eq:defo_cA1}
    \foX =
    \left\{
    \begin{aligned}
    xy-w^2& =-t^2\\
    zw& = tu
  \end{aligned}
  \right.
\end{equation}
in $\AA^5\times \AA^1$, together with the projection $\pi \colon \foX
\to \AA^1$. The equations describe a smoothing of $X$; and the
restriction to $X$ of the divisorial log structure $\foM_{\foX, X}$ is
the one given in \eqref{log-str-3-fold}.

 \subsubsection{Log crepant log resolutions}
\label{sec-blowup-threefold}
The following facts are elementary and easy to verify. We construct a
\mbox{$3$-fold} $Y$ and proper birational morphism $f\colon Y \to X$ as
follows. First let $Y_2\to X_2$ be the blow up of the singular curve
$Z_3\subset X_2$.\\ 

\noindent
\begin{minipage}[c]{0.65\textwidth}
Next let $Y_3\to
X_3$ be as follows:
\begin{itemize}
\item first, let $f^\prime \colon Y_3^\prime \to X_3$ be the blow up
  of the curve $Z_2\subset X_3$, and
\item second, let $Y_3 \to Y_3^\prime$ be the blow up of the strict
  transform $Z_1^\prime\subset Y_3^\prime$ of the curve $Z_1\subset
  X_3$ given by $(u^2-z^2=0)\subset \AA^2_{z, u}$. Note that
  $Z_1^\prime$ is nonsingular --- and consists of two components that
  we call $Z_+$, $Z_-$ corresponding to the factors $u+z$, $u-z$.
\end{itemize}
Denote by $Y$ be the \mbox{$3$-fold} obtained by re-assembling the three components
$Y_1=X_1$, $Y_2$, $Y_3$ in the obvious way; and by $f\colon Y \to X$
the obvious proper birational morphism.\\
\end{minipage}
\hfill
\begin{minipage}[c]{0.3\textwidth}
  \captionsetup{width=\textwidth}
\qquad\ \includegraphics[width=.85\textwidth]{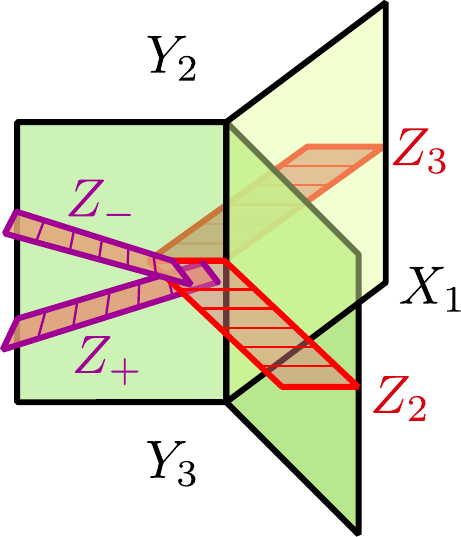}
\captionof{figure}{Schematic of the resolution.}
\label{fig-lattice3}
\end{minipage}

Our construction of $\shLS$ gives a way to compare $\shLS_Y$ with $f^\star
(\shLS_X)$, which we next explain. Denote by $T=\Sing Y$ the singular set with
the reduced scheme structure; $T$ consists of three irreducible
components
\[
 T_1=Y_2\cap Y_3, \quad T_2=Y_3\cap
 Y_1,\quad T_3= Y_1\cap Y_3
\]
We have that $f_{|T_1} \colon T_1\to S_1$ is the blow up of the origin,
$f_{|T_2}\colon T_2\to S_2$ is an isomorphism, and so is
$f_{|T_3}\colon T_3\to S_3$.

Next we identify the slab line bundles $\shL_{Y,1}$, $\shL_{Y,2}$, $\shL_{Y,3}$ on
$T_1$, $T_2$, $T_3$.

We need a notation for the exceptional divisors of the morphisms
$Y_i\to X_i$. The first of these morphisms, $f_{|Y_1}\colon Y_1\to X_1$, is an
isomorphism, so there is nothing to be done here.  Denote by
$E_2\subset Y_2$ the exceptional divisor of
$f_{|Y_2}\colon Y_2\to X_2$, so $E_2$ is a $\PP^1$-bundle over $Z_3$. 
The exceptional divisor of
$f_{|Y_3}\colon Y_3\to X_3$ consists of three components: the strict
transform $E_3$ of the exceptional divisor of $f^\prime \colon
Y_3^\prime \to X_3$, and the divisors $F_+$, $F_-$ that dominate the
curves $u+z=0$, $u-z=0$ in $S_1$.

We denote by $\Xi = E_{2}\cap T_1 = E_{3}\cap T_1\subset T_1$ the exceptional divisor of
$T_1\to S_1$, and we write $Z_+=F_{+}\cap T_1$, $Z_-=F_{-}\cap T_1$. Note that
\[
(f_{|T_1})^\star (Z_1) = F_++F_- + 2\Xi 
\]

With this notation in hand we are ready to compute the slab bundles.
The Stanley Reisner ring along the joint $u$ is given by the fan at
the monomial $u$ in Figure~\ref{fig:surface}. We see that the
monomials $xz$ and $yz$ correspond to opposite lattice vectors, so we
can use them to compute the slab bundle on $T_1$.  We also see from
the figure that the monomial $xz$ defines the boundary divisor
$\partial Y_3$ on $Y_3$ and the monomial $yz$ defines the boundary
divisor $\partial Y_2$ on $Y_2$, so we arrive at
\begin{multline*}
  \label{eq:A1slabs}
  \shL_{Y,1}=\shO_{Y_2}(\partial Y_2)_{|T_1}\otimes \shO_{Y_3}(\partial Y_3)_{|T_1}=(N_{Y_2} T_1) \otimes (N_{Y_3} T_1) \otimes \shO_{T_1}
  (2Y_1) = \\
  =(f^\star N_{X_2} S_1) \otimes (f^\star N_{X_3} S_1) (-F_+-F_-)
  \otimes \shO_{S_1} (2X_1)(-2\Xi) =\\
  =(f^\star \shL_{X_1})(-F_+-F_--2\Xi) =
  f^\star \bigl(\shL_{X,1}(-Z_1)\bigr)
\end{multline*}
The formulas for $\shL_{Y,2}$ and $\shL_{Y,3}$ are much easier to
understand:
\[
\shL_{Y,2}=\shL_{X,2}(-Z_2), \quad \shL_{Y,3} = \shL_{X,3} (-Z_3)
\]

We summarize these calculations with the formula
\begin{equation}
  \label{eq:LS_blowup_formula}
\shLS_Y=f^\star \bigl( \shLS_X(-Z)\bigr).
\end{equation}
Pulling back the section of $\shLS_X$ defined by the slab sections \eqref{log-str-3-fold} results in a nowhere vanishing section of $\shLS_Y$.
We see from this that
\begin{enumerate}[(1)]
\item There is a unique log structure on $Y$ over $k^\dagger$ that on
  \[
Y\setminus f^{-1}(Z) \overset{f}{=} X\setminus Z
\]
 is the given one on $X\setminus Z$;
\item The resulting log scheme $Y^\dagger/k^\dagger$ is log smooth
  over $\Spec k^\dagger$.
\end{enumerate}
One can check that the resulting morphism $Y \to X$ is log crepant, so it gives a log crepant log resolution of $X^\dagger$ over $k^\dagger$.

This example shows how the sheaf $\shLS_X$ can be used in constructing
log smooth resolutions.

\bibliography{bib_logstr}

@incollection {MR4866877,
    AUTHOR = {Buzzard, Kevin},
     TITLE = {Grothendieck's use of equality},
 BOOKTITLE = {The mathematical and philosophical legacy of {A}lexander
              {G}rothendieck},
    SERIES = {Chapman Math. Notes},
     PAGES = {337--354},
 PUBLISHER = {Birkh\"auser/Springer, Cham},
      YEAR = {[2025] \copyright 2025},
      ISBN = {978-3-031-68933-8; 978-3-031-68934-5},
   MRCLASS = {03B35 (11R39 14-02 18A15)},
  MRNUMBER = {4866877},
       DOI = {10.1007/978-3-031-68934-5\_13},
       URL = {https://doi.org/10.1007/978-3-031-68934-5_13},
}

@misc{stacks-project,
  shorthand = {Stacks},
  author = {The {Stacks Project Authors}},
  title = {\textit{Stacks Project}},
  howpublished = {\url{https://stacks.math.columbia.edu}},
  year = {2026},
}

@incollection {MR3702314,
    AUTHOR = {Abramovich, Dan and Chen, Qile and Marcus, Steffen and
              Ulirsch, Martin and Wise, Jonathan},
     TITLE = {Skeletons and fans of logarithmic structures},
 BOOKTITLE = {Nonarchimedean and tropical geometry},
    SERIES = {Simons Symp.},
     PAGES = {287--336},
 PUBLISHER = {Springer, [Cham]},
      YEAR = {2016},
      ISBN = {978-3-319-30944-6; 978-3-319-30945-3},
   MRCLASS = {14M25 (14T05 52B20)},
  MRNUMBER = {3702314},
MRREVIEWER = {T.\ Oda},
}

@book {MR3838359,
    AUTHOR = {Ogus, Arthur},
     TITLE = {Lectures on logarithmic algebraic geometry},
    SERIES = {Cambridge Studies in Advanced Mathematics},
    VOLUME = {178},
 PUBLISHER = {Cambridge University Press, Cambridge},
      YEAR = {2018},
     PAGES = {xviii+539},
      ISBN = {978-1-107-18773-3},
   MRCLASS = {14D06 (14A20 14M25)},
  MRNUMBER = {3838359},
MRREVIEWER = {Howard\ M.\ Thompson},
       DOI = {10.1017/9781316941614},
       URL = {https://doi.org/10.1017/9781316941614},
}

@misc{farajzadeh2025stable,
  author       = {Farajzadeh-Tehrani, Mohammad and Swaminathan, Mohan},
  title        = {What is a stable log map?},
  year         = {2025},
  month        = nov,
  eprint       = {2511.22917},
  archivePrefix= {arXiv},
  primaryClass = {math.AG},
  note         = {86 pages},
  abstract     = {Let $X$ be a smooth projective variety over $\mathbb{C}$ with a simple normal crossings divisor $D\subset X$. We compare the notions of stable log maps to $(X,D)$ in algebraic geometry and symplectic topology. In particular, we prove an equivalence between fine (basic) algebraic log maps and symplectic log maps, and we define the symplectic analogue of fine saturated algebraic log maps by refining the notion of log Gromov convergence.},
  url          = {https://arxiv.org/abs/2511.22917},
  doi          = {10.48550/arXiv.2511.22917},
}

@misc{corti2025singular,
  author       = {Corti, Alessio and Graefnitz, Tim and Ruddat, Helge},
  title        = {Singular Log Structures and Log Crepant Log Resolutions I},
  year         = {2025},
  month        = mar,
  eprint       = {2503.11610},
  archivePrefix= {arXiv},
  primaryClass = {math.AG},
  note         = {15 pages, 6 figures},
  abstract     = {We introduce a class of singular log schemes in three dimensions and conjecture that log schemes in this class admit log crepant log resolutions. We provide examples as evidence and relate this conjecture to the conjecture made in [4] and the Gross--Siebert program.},
  url          = {https://arxiv.org/abs/2503.11610},
  doi          = {10.48550/arXiv.2503.11610},
}

@article {MR570309,
    AUTHOR = {Kleiman, Steven L.},
     TITLE = {Misconceptions about {$K\sb{X}$}},
   JOURNAL = {Enseign. Math. (2)},
  FJOURNAL = {L'Enseignement Math\'ematique. Revue Internationale. 2e
              S\'erie},
    VOLUME = {25},
      YEAR = {1979},
    NUMBER = {3-4},
     PAGES = {203--206 (1980)},
      ISSN = {0013-8584},
   MRCLASS = {14A25},
  MRNUMBER = {570309},
MRREVIEWER = {Takao\ Fujita},
}

@article {MR1296351,
    AUTHOR = {Kawamata, Yujiro and Namikawa, Yoshinori},
     TITLE = {Logarithmic deformations of normal crossing varieties and
              smoothing of degenerate {C}alabi-{Y}au varieties},
   JOURNAL = {Invent. Math.},
  FJOURNAL = {Inventiones Mathematicae},
    VOLUME = {118},
      YEAR = {1994},
    NUMBER = {3},
     PAGES = {395--409},
      ISSN = {0020-9910,1432-1297},
   MRCLASS = {32G05 (14D15 14J32 14J40 32S30)},
  MRNUMBER = {1296351},
MRREVIEWER = {Claire\ Voisin},
       DOI = {10.1007/BF01231538},
       URL = {https://doi.org/10.1007/BF01231538},
}

@article {MR1312571,
    AUTHOR = {Steenbrink, J. H. M.},
     TITLE = {Logarithmic embeddings of varieties with normal crossings and
              mixed {H}odge structures},
   JOURNAL = {Math. Ann.},
  FJOURNAL = {Mathematische Annalen},
    VOLUME = {301},
      YEAR = {1995},
    NUMBER = {1},
     PAGES = {105--118},
      ISSN = {0025-5831,1432-1807},
   MRCLASS = {14C30 (14F40 32G20 32S35)},
  MRNUMBER = {1312571},
MRREVIEWER = {Claire\ Voisin},
       DOI = {10.1007/BF01446621},
       URL = {https://doi.org/10.1007/BF01446621},
}

@article {MR1993863,
    AUTHOR = {Olsson, Martin C.},
     TITLE = {Universal log structures on semi-stable varieties},
   JOURNAL = {Tohoku Math. J. (2)},
  FJOURNAL = {The Tohoku Mathematical Journal. Second Series},
    VOLUME = {55},
      YEAR = {2003},
    NUMBER = {3},
     PAGES = {397--438},
      ISSN = {0040-8735,2186-585X},
   MRCLASS = {14D22 (14H10 14M25)},
  MRNUMBER = {1993863},
MRREVIEWER = {Gordon\ Heier},
       URL = {http://projecteuclid.org/euclid.tmj/1113247481},
}

@article {MR1754621,
    AUTHOR = {Kato, Fumiharu},
     TITLE = {Log smooth deformation and moduli of log smooth curves},
   JOURNAL = {Internat. J. Math.},
  FJOURNAL = {International Journal of Mathematics},
    VOLUME = {11},
      YEAR = {2000},
    NUMBER = {2},
     PAGES = {215--232},
      ISSN = {0129-167X,1793-6519},
   MRCLASS = {14D22 (14H10)},
  MRNUMBER = {1754621},
MRREVIEWER = {Samuel\ Dalalyan},
       DOI = {10.1142/S0129167X0000012X},
       URL = {https://doi.org/10.1142/S0129167X0000012X},
}

@article {MR0707162,
    AUTHOR = {Friedman, Robert},
     TITLE = {Global smoothings of varieties with normal crossings},
   JOURNAL = {Ann. of Math. (2)},
  FJOURNAL = {Annals of Mathematics. Second Series},
    VOLUME = {118},
      YEAR = {1983},
    NUMBER = {1},
     PAGES = {75--114},
      ISSN = {0003-486X,1939-8980},
   MRCLASS = {32G11 (14D20 14J28 32G13)},
  MRNUMBER = {707162},
MRREVIEWER = {David\ R.\ Morrison},
       DOI = {10.2307/2006955},
       URL = {https://doi.org/10.2307/2006955},
}

@book {Felten,
    AUTHOR = {Felten, Simon},
     TITLE = {Global logarithmic deformation theory},
    SERIES = {Lecture Notes in Mathematics},
    VOLUME = {2373},
 PUBLISHER = {Springer, Cham},
      YEAR = {2025},
     PAGES = {xlviii+627},
      ISBN = {978-3-031-98750-2; 978-3-031-98751-9},
   MRCLASS = {99-06},
  MRNUMBER = {4999357},
       DOI = {10.1007/978-3-031-98751-9},
       URL = {https://doi.org/10.1007/978-3-031-98751-9},
}

@article {MR4359498,
    AUTHOR = {Felten, Simon},
     TITLE = {Log smooth deformation theory via {G}erstenhaber algebras},
   JOURNAL = {Manuscripta Math.},
  FJOURNAL = {Manuscripta Mathematica},
    VOLUME = {167},
      YEAR = {2022},
    NUMBER = {1-2},
     PAGES = {1--35},
      ISSN = {0025-2611,1432-1785},
   MRCLASS = {14D15 (14A21 14J32)},
  MRNUMBER = {4359498},
MRREVIEWER = {Francesco\ Bottacin},
       DOI = {10.1007/s00229-020-01255-6},
       URL = {https://doi.org/10.1007/s00229-020-01255-6},
}

@article {MR4643802,
    AUTHOR = {Chan, Kwokwai and Leung, Naichung Conan and Ma, Ziming
              Nikolas},
     TITLE = {Geometry of the {M}aurer-{C}artan equation near degenerate
              {C}alabi-{Y}au varieties},
   JOURNAL = {J. Differential Geom.},
  FJOURNAL = {Journal of Differential Geometry},
    VOLUME = {125},
      YEAR = {2023},
    NUMBER = {1},
     PAGES = {1--84},
      ISSN = {0022-040X,1945-743X},
   MRCLASS = {14J32 (14D06 14D07 32Q25 53D37)},
  MRNUMBER = {4643802},
       DOI = {10.4310/jdg/1695236591},
       URL = {https://doi.org/10.4310/jdg/1695236591},
}

@article {MR2964607,
    AUTHOR = {Borne, Niels and Vistoli, Angelo},
     TITLE = {Parabolic sheaves on logarithmic schemes},
   JOURNAL = {Adv. Math.},
  FJOURNAL = {Advances in Mathematics},
    VOLUME = {231},
      YEAR = {2012},
    NUMBER = {3-4},
     PAGES = {1327--1363},
      ISSN = {0001-8708,1090-2082},
   MRCLASS = {14F05 (14A20 18D10)},
  MRNUMBER = {2964607},
MRREVIEWER = {Jon\ Eivind\ Vatne},
       DOI = {10.1016/j.aim.2012.06.015},
       URL = {https://doi.org/10.1016/j.aim.2012.06.015},
}

@article {MR2218822,
    AUTHOR = {Schr\"{o}er, Stefan and Siebert, Bernd},
     TITLE = {Toroidal crossings and logarithmic structures},
   JOURNAL = {Adv. Math.},
  FJOURNAL = {Advances in Mathematics},
    VOLUME = {202},
      YEAR = {2006},
    NUMBER = {1},
     PAGES = {189--231},
      ISSN = {0001-8708,1090-2082},
   MRCLASS = {14D06 (14J32)},
  MRNUMBER = {2218822},
MRREVIEWER = {Mark\ Gross},
       DOI = {10.1016/j.aim.2005.03.006},
       URL = {https://doi.org/10.1016/j.aim.2005.03.006},
}

@article {EGA_IVd,
 TITLE = {\'{E}tude locale des sch\'{e}mas et des morphismes de sch\'{e}mas (Quatri\`{e}me partie)},
  JOURNAL = {Inst. Hautes \'{E}tudes Sci. Publ. Math.},
   VOLUME = {32},
    YEAR = {1967}, PAGES = {5--361},
    NOTE = {Ch.IV.§16--21},
}

@article {MR2213573,
    AUTHOR = {Gross, Mark and Siebert, Bernd},
     TITLE = {Mirror symmetry via logarithmic degeneration data. {I}},
   JOURNAL = {J. Differential Geom.},
  FJOURNAL = {Journal of Differential Geometry},
    VOLUME = {72},
      YEAR = {2006},
    NUMBER = {2},
     PAGES = {169--338},
      ISSN = {0022-040X},
   MRCLASS = {14J32 (32Q25)},
  MRNUMBER = {2213573},
MRREVIEWER = {Diego Matessi},
       URL = {http://projecteuclid.org/euclid.jdg/1143593211},
}

@inbook {MR0354654,
    AUTHOR = {Deligne, Pierre},
     TITLE = {Expos\'es XVIII (la formule de dualité globale)},
 BOOKTITLE = {"Th\'{e}orie des topos et cohomologie \'{e}tale des sch\'{e}mas". Tome 3},
    SERIES = {Lecture Notes in Mathematics, Vol. 305},
      NOTE = {S\'{e}minaire de G\'{e}om\'{e}trie Alg\'{e}brique du Bois-Marie 1963--1964
              (SGA 4),
              Dirig\'{e} par M. Artin, A. Grothendieck et J. L. Verdier. Avec la
              collaboration de P. Deligne et B. Saint-Donat},
 PUBLISHER = {Springer-Verlag, Berlin-New York},
      YEAR = {1973},
     PAGES = {vi+640},
   MRCLASS = {14-06},
  MRNUMBER = {0354654},
}

@article {MR4304077,
    AUTHOR = {Felten, Simon and Filip, Matej and Ruddat, Helge},
     TITLE = {Smoothing toroidal crossing spaces},
   JOURNAL = {Forum Math. Pi},
  FJOURNAL = {Forum of Mathematics. Pi},
    VOLUME = {9},
      YEAR = {2021},
     PAGES = {Paper No. e7, 36},
   MRCLASS = {14D06 (14D05 14D15 14J32 14J45 32G07)},
  MRNUMBER = {4304077},
MRREVIEWER = {Donatella Iacono},
       DOI = {10.1017/fmp.2021.8},
       URL = {https://doi.org/10.1017/fmp.2021.8},
}

\end{document}